\title{Skew Howe duality for $\Uqgln$ via quantized Clifford algebras}
\author[W.~Aboumrad]{Willie Aboumrad}
\address[W.~Aboumrad]{The Institute for Computational and Mathematical Engineering (ICME) at Stanford University}
\email{willieab@stanford.edu}
\urladdr{https://web.stanford.edu/~willieab}
\keywords{Howe duality, skew symmetric, quantum FFT, $q$-Clifford}
\date{}
\begin{document}
\maketitle

\begin{abstract}
	We develop an operator commutant version of the First Fundamental Theorem of invariant theory for the general linear quantum group $\Uqgln$ by using a double centralizer property inside a quantized Clifford algebra. In particular, we show that $\Uqglm$ generates the centralizer of the $\Uqgln$-action on the tensor product of braided exterior algebras $\bigwedge_q(\mathbb{C}^n)^{\otimes m}$. We obtain a multiplicity-free decomposition of the $\Uqgln \otimes \Uqglm$-module $\bigwedge_q(\mathbb{C}^n)^{\otimes m} \cong \bigwedge_q(\mathbb{C}^{nm})$ by computing explicit joint highest weight vectors. We find that the irreducible modules in this decomposition are parametrized by the same dominant weights as in the classical case of the well-known skew $GL_n \times GL_m$-duality. Clifford algebras are an essential feature of our work: they provide a unifying framework for classical and quantized skew Howe duality results that can be extended to include orthogonal algebras of types $\mathbf{BD}$. 
\end{abstract}

\section{Introduction}
In this \chorpaper\ we develop a classical and a quantum \textit{skew} Howe duality result for Type $\mathbf{A}$ algebras. The classical result lays the foundations for our quantized theorem, and the quantized result suggests the generalization to orthogonal types obtained in \crossrefsonch. Clifford algebras are an essential feature of our work, as they provide a unifying framework for discussing duality results for skew-symmetric variables in both the classical and quantum cases, for Types $\mathbf{ABD}$.

Our duality results extend a long and rich tradition in invariant theory tracing back to at least Weyl's prominent book, \textit{The Classical Groups} \cite{weyl_1939}. The fundamental theorems of classical invariant theory may be formulated as a solution to the following problem: given a module $V$ for a reductive group $G$ over a field $k$, provide a complete description of $\End_{k[G]}(V^{\otimes m})$. The First Fundamental Theorem (FFT) gives generators for the centralizer algebra and the Second Fundamental Theorem (SFT) describes all relations amongst them \cite{GW,weyl_1939}. A typical case addressed by the fundamental theorems is when $G$ is one of the classical complex Lie groups and $V$ is the natural $G$-module. In these cases, the fundamental theorems are known as Schur-Weyl-Brauer duality. For details, see Section~5.6 in \cite{GW}. 

Various results are known in the quantized setting. For instance, when $\lieg$ is a classical complex Lie algebra and $V$ is the natural module of the quantized enveloping algebra $\Uqg$, $\End_{\Uqg}(V^{\otimes m})$ is a quotient of a Hecke algebra if $\lieg = \gln$ and it is a quotient of a Birman-Murakami-Wenzl (BMW) algebra in the remaining cases \cite{jimbo_1986,leduc_ram_1997,LZ06}. Lehrer and Zhang describe $\End_{\Uqg}(V^{\otimes m})$ for any irreducible $U_q(\mathfrak{sl}_2)$-module $V$ and when $V$ is the $7$-dimensional irreducible $U_q(\lieg_2)$-module in \cite{LZ06}. Wenzl covered some cases where $\lieg$ is of Lie types $\mathbf{D}$ and $\mathbf{E}$ in \cite{wenzl_spin_centralizer,wenzl_2003}. 

There are various equivalent formulations of the fundamental theorems; in this \chorpaper\ and the next we consider \textit{operator commutant} versions \cite[Theorems~4.2.5,~4.3.4]{howe1995}. We adopt Howe's perspective, which uses multiplicity-free actions satisfying a double centralizer property as an organizing principle, and apply it to the setting of $q$-skew-symmetric variables. 

In this \chorpaper\, we focus Type $\mathbf{A}$ algebras. Our main result, \Cref{uqgln uqglm duality}, obtains an FFT for $\Uqgln$ by computing the multiplicity-free decomposition of a certain quantized exterior algebra under the $\Uqgln \otimes \Uqglm$-action induced by embeddings of $\Uqgln$ and $\Uqglm$ into a quantized Clifford algebra. \Cref{quantum group embeddings into clqnm} summarizes the construction of \Cref{q skew duality type a}.

We begin by re-proving the classical skew $GL_n \times GL_m$-duality \Cref{skew gln glm duality}. This theorem is well-known and may be found in various references, e.g., see Theorem~4.1.1 in \cite{howe1995} or Theorem~38.2 in \cite{Bump}. We provide an alternative proof using a Clifford algebra. In particular,  \Cref{classical skew duality A} constructs commuting embeddings of $\gln$ and $\glm$ into the Clifford algebra $\Clnm$, as illustrated in the following diagram, and then computes a multiplicity-free decomposition of the exterior algebra $\bigwedge(\mathbb{C}^{nm})$ as a module of the tensor product of enveloping algebras $U(\gln) \otimes U(\glm)$. We describe this novel method because it generalizes to the quantum case, allowing us to prove our skew $\Uqgln \otimes \Uqglm$-duality \Cref{uqgln uqglm duality}. Howe does not provide these Clifford embeddings explicitly in \cite{howe1995}, but they are crucial to the generalization to Types $\mathbf{BD}$ developed in \crossrefsonch.
\begin{equation}\label[diag]{classical embeddings diagram}
	\begin{tikzcd}[column sep=-2em, row sep=tiny]
		& \bigwedge(\mathbb{C}^m)^{\otimes n} 
			\cong 
		  \bigwedge(\mathbb{C}^{nm}) 
		    \cong 
		  \bigwedge(\mathbb{C}^n)^{\otimes m}
		& \\[-0.5em]
		& \text{\Large$\circlearrowleft$}
		& \\ [-0.5em]
		& \Clnm
		& \\[+1.75em]
		& \glnm \ar[u]
		& \\[0.5em]
		\mathfrak{gl}_n \ar[ur] \ar[rr, no head, no tail, dashed] \ar[uur, "\lambda"]
		&
		& \mathfrak{gl}_m \ar[ul] \ar[uul, "\rho", swap]
	\end{tikzcd}
\end{equation}
Here $Cl(V \oplus V^*)$ denotes the Clifford algebra generated by the complex space $V \oplus V^*$ equipped with the canonical symmetric bilinear form arising from the duality pairing between $V$ and $V^*$\longer{, as in \Cref{symm bilinear form}}. When necessary, we view the Clifford algebra as a Lie algebra with bracket given by the usual algebra commutator.

With \Cref{skew gln glm duality} in hand, we turn to our main result: the skew $\Uqgln \otimes \Uqglm$-duality \Cref{uqgln uqglm duality}. The first step is to obtain commuting actions of $\Uqgln$ and $\Uqglm$ on the braided exterior algebra $\bigwedge_q(V^{(nm)})$, with $V^{(p)}$ denoting the natural $U_q(\mathfrak{gl}_p)$-module. Braided exterior algebras were introduced in \cite{berenstein} as $\Uqg$-module analogues of the exterior algebra $\bigwedge(V)$. Both the classical and the braided exterior algebras can be understood as quotients of a tensor algebra modulo an ideal generated by ``symmetric'' $2$-tensors. In the classical case, the ``symmetric'' $2$-tensors are always the $+1$ eigenvectors of the trivial flip maps that transpose tensor factors; in the quantum case, they are the eigenvectors with eigenvalue of the form $+q^r$ for some $r \in \mathbb{Q}$ of the braiding operators induced by the $R$-matrix of $\Uqg$. \Cref{braided_ext_alg} recalls the construction of $\bigwedge_q(\Vn)$ in detail.

\Cref{commuting quantum actions section} obtains quantum group actions on $\bigwedge_q(V^{(nm)})$ by emulating the classical Clifford algebra construction we develop in \Cref{classical skew duality A}. In particular, we define quantum analogues $\lambda_q$ and $\rho_q$ of $\lambda$ and $\rho$ as illustrated in the following diagram.
\begin{equation}\label[diag]{quantum group embeddings into clqnm}
	\begin{tikzcd}[row sep=small, column sep=-1.5em]
		& \bigwedge_q(V^{(nm)}) \cong \bigwedge_q(\Vn)^{\otimes m}
		& \\[-9pt]
		& \text{\Large$\circlearrowleft$}
		& \\ [-9pt]
		& Cl_q(nm)
		& \\[7pt]
		\Uqgln \ar[ur, "\lambda_q"] \ar[rr, no head, no tail, dashed]
		&
		& \Uqglm \ar[ul, "\rho_q", swap]
	\end{tikzcd}
\end{equation}
In the top row we have an isomorphism of $\Uqgln$-modules. We use $Cl_q(nm)$ to denote the quantum Clifford algebra\longer{ of \crossrefcliff{clqnk defn}, with twist $k = 1$}. The quantized Clifford algebra was first defined by Hayashi in \cite{hayashi_1990}. In this \chorpaper\ we use a similar version due to Kwon \cite{kwon_2014}. In \crossrefcliffch\ we recall basic properties of the classical Clifford algebras and study their quantized counterparts in depth: we obtain new results on their algebraic structure and representation theory, including a center calculation, a factorization as a tensor product, and a complete list of irreducible representations. \Cref{uqgln clqnm embedding} shows that $\lambda_q$ factors through $\Uqglnm$, much like $\lambda\colon \gln \to \Clnm$ factors through $\glnm$ in the classical case. 

In contrast to the classical picture of \Cref{classical embeddings diagram}, the map $\rho_q$ does not factor through $\Uqglnm$. In the quantum case, a single choice of weight basis cannot simultaneously describe both the isomorphism of $\Uqgln$-modules $\bigwedge_q(V^{(nm)}) \cong \bigwedge_q(\Vn)^{\otimes m}$ and the isomorphism of $\Uqglm$-modules $\bigwedge_q(V^{(nm)}) \cong \bigwedge_q(V^{(m)})^{\otimes n}$: the very construction of $\bigwedge_q(V^{(p)})$ depends on a choice of weight basis of $V^{(p)}$ through the $R$-matrix of $U_q(\mathfrak{gl}_p)$, which is defined in terms of its action on a given basis. \Cref{column major order means glm action does not factor} explains that, as illustrated in \Cref{root vector action on basis}, the action of simple root vectors in $\Uqglnm$ on a chosen $\Uqglnm$-weight basis of $\bigwedge_q(V^{(nm)})$ does not simultaneously ``align'' with the action simple root vectors in both $\Uqgln$ and $\Uqglm$.


Regardless, $\lambda_q$ and $\rho_q$ still define commuting actions on the $Cl_q(nm)$-module $\bigwedge_q(V^{(nm)})$. In fact, \Cref{uqgln uqglm embeddings commute} shows that $\Uqgln$ and $\Uqglm$ generate mutual commutants in $\End(\bigwedge_q(V^{(nm)}))$. Although $\Uqgln$ and $\Uqglm$ induce commuting subalgebras of \textit{module endomorphisms}, the images of $\lambda_q$ and $\rho_q$ do not define commuting subalgebras in $Cl_q(nm)$. This discrepancy is explained by the fact that the $Cl_q(nm)$-module $\bigwedge_q(V^{(nm)})$ is \textit{not} faithful\longer{, implied by} \crossrefcliff{clqnk is semisimple}. This is contrary to the classical case, where the representation $\bigwedge(\mathbb{C}^{nm})$ of the simple algebra $\Clnm$ is faithful and the images of $\lambda$ and $\rho$ indeed define commuting subalgebras in $\Clnm$.

We conclude our proof of \Cref{uqgln uqglm duality} by computing joint highest weight vectors in $\bigwedge_q(V^{(nm)})$ with respect to the $\Uqgln \otimes \Uqglm$-action induced by $\lambda_q \otimes \rho_q$ and practicing the \textit{double-commutant yoga} introduced by Howe in \cite{howe1995}.

We note that Theorem $6.16$ in \cite{lzz_2010} proves a similar related result. It computes the multiplicity-free decomposition of $\bigwedge_q(\Vn \otimes V^{(m)})$ as a $\Uqgln \otimes \Uqglm$-module. The braided exterior algebra $\bigwedge_q(\Vn \otimes V^{(m)})$ is constructed using the braiding induced by the tensor product of the $R$-matrices of $\Uqgln$ and $\Uqglm$. While $\bigwedge_q(\Vn \otimes V^{(m)})$ is isomorphic as a vector space to the braided exterior algebra $\bigwedge_q(V^{(nm)})$ considered here, which is constructed using the braiding of $\Uqglnm$ instead, they have different algebra structures. The structure considered here yields an isomorphism $\bigwedge_q(V^{(nm)}) \cong \bigwedge_q(\Vn)^{\otimes m}$ of $\Uqgln$-module algebras that motivates an extension to orthogonal types through the \textit{seesaw} developed in \crossrefsonch.

Although \cite{lzz_2010} also defines a multiplicity-free action of $\Uqgln \otimes \Uqglm$ on a quantized exterior algebra, it does not consider a double centralizer property inside a quantized Clifford algebra. In our work, Clifford algebras are essential. They provide a unifying framework for skew Howe duality results: in each case we consider certain subalgebras of a Clifford algebra defining commuting actions on a skew-symmetric space. In addition, the Clifford algebras framework elucidates a correspondence between the classical and quantized cases, in the sense that the operators induced by generators of quantized enveloping algebras de-quantize to their classical counterparts in the limit $q \to 1$.

The extension to types $\mathbf{BD}$ described in \crossrefsonch\ is a particularly remarkable feature of our method. In that setting, the Clifford algebras framework facilitates a computation of explicit joint highest weight vectors in $S^{\otimes m}$, with $S \cong \bigwedge_q(\Vn)$ denoting the spinor representation of $\Uqodn$. The description of $\End_{\Uqodn}(S^{\otimes m})$ obtained in \crossrefsonch\ yields matrix solutions of the Yang-Baxter equation\longer{ in \Cref{braiding ch}}. Specialized to certain roots of unity, these solutions define braid group representations arising from the fusion of so-called \textit{metaplectic} anyons, which have been studied previously by Rowell and Wang, and by Rowell and Wenzl, in \cite{rowell_wang_2011} and \cite{rowell_wenzl_2017}.

We note that there is a considerable literature studying quantized FFTs in the setting of \textit{symmetric} variables. Perhaps the first result is in \cite{zhang_howe_duality}. Most recently, \cite{lss_weyl_alg} obtains a double centralizer property inside a quantized Weyl algebra by considering actions of $\Uqgln$ and $\Uqglm$ on the quantized coordinate ring of the $n \times m$ matrices. 

In short, this \chorpaper\ is organized as follows. \Cref{classical skew duality A} deals with the classical case while \Cref{q skew duality type a} discusses the quantized generalization. Each section has three subsections: the first studies actions of the classical and quantum $\gln$ on an appropriate exterior algebra that factor through a Clifford algebra, the second considers commuting actions on a tensor product of exterior algebras, and finally the third achieves a multiplicity-free decomposition using the actions just defined.

\subsection*{Acknowledgements} 

This article emerged as part of the author's dissertation work under the supervision of Dan Bump. The author would like to thank Dan Bump for his infinite patience and support, and for the continuous stream of advice that made this work possible. This article benefits from joint work with Travis Scrimshaw on quantum Clifford algebras, which is in preparation, and from his relevant code available on SAGE \cite{sagemath}. 

\section{Notation and conventions}
\refstepcounter{notation}
\label{not and conv}

We will use $e_i$ throughout to denote the standard basis of $\mathbb{R}^n$.

In this section we fix our notation for Lie algebras and recall some definitions. \longer{Let $\lieg$ denote either the reductive complex Lie algebra $\mathfrak{gl}_n$, or a finite-dimensional complex semisimple Lie algebra. We will mainly consider the cases $\lieg = \mathfrak{gl}_n, \mathfrak{so}_n$, and $\mathfrak{sp}_n$.} 

\paragraph{Lie algebra presentation}
Following \cite{chari_pressley_1994}, we use the \textit{Chevalley presentation} throughout. If $\lieg$ is a complex semisimple Lie algebra, we let $A = [a_{ij}]$ denote its \textit{generalized Cartan matrix}.  We let $D = \mathrm{diag}(d_1, \ldots, d_r)$ denote the diagonal matrix of root lengths such that $DA$ is symmetric positive definite. Recall $a_{ii} = 2$ and $a_{ij} \leq 0$ whenever $i\neq j$.
	
The semisimple algebra $\lieg$ is generated by $H_i, E_i$, and $F_i$, for $i = 1, \ldots, r$, subject to the relations
\begin{equation}\label{chevalley presentation of g}
	\begin{gathered}
	[H_i, H_j] = 0, \quad [H_i, E_j] = a_{ij} E_j, \quad [H_i, F_j] = -a_{ij} F_j, \quad [E_i, F_j] = \delta_{ij} H_i,  \\
	\end{gathered}
\end{equation}
along with the \textit{Serre relations} for $i \neq j$:
\begin{equation}\label{Serre rels of g}
	(\mathrm{ad}_{E_i})^{1-a_{ij}}(E_j) = 0, \quad \text{and} \quad (\mathrm{ad}_{F_i})^{1-a_{ij}}(F_j) = 0
\end{equation}
\begin{remark}
	Authors like Jantzen \cite{J}, Lusztig \cite{lusztig_1988}, and Sawin \cite{sawin_1996} prefer the notation $X_i^+ = E_i$ and $X_i^- = F_i$, suggestive of positive and negative roots in general, and of ``raising'' and ``lowering'' operators in this context.
\end{remark}

We will denote by $\Ug$ the \textit{universal enveloping algebra} associated to $\lieg$\longer{ \cite[Chapter~10]{Bump}. This associative unital algebra has the same generators as $\lieg$, subject to relations in \eqref{chevalley presentation of g}, where in this case $[A, B] = AB - BA$ denotes the usual algebra commutator. The generators are also subject to the Serre relations \eqref{Serre rels of g}, which are written in this context as
\begin{equation*}\label{classical Serre relations}
	\sum_{k=0}^{1-a_{ij}}(-1)^k \binom{1-a_{ij}}{k} (X_i)^k X_j (X_i)^{1- a_{ij}-k} = 0.
\end{equation*}
In the last equality $X$ is either an $E$ or an $F$}.

Now fix the \textit{Cartan subalgebra} $\mathfrak{h}$ generated by the $H_i$\longer{ and let $\mathfrak{b}$ denote the \textit{Borel subalgebra} containing $\mathfrak{h}$}. The \textit{simple roots} of $\lieg$ are the linear functionals $\alpha_i\colon \mathfrak{h} \to \mathbb{C}$ satisfying
\begin{equation*}
	\alpha_i(H_j) = a_{ji}.
\end{equation*}
\longer{We use $\varPhi$ to denote the set of simple roots and we set 
\begin{equation*}
	\mathcal{Q} = \bigoplus_{\alpha \in \varPhi} \mathbb{Z} \alpha 
	\quad \text{and} \quad 
	\mathcal{Q}^+ = \bigoplus_{\alpha \in \varPhi} \mathbb{Z}_+ \alpha.
\end{equation*}}
We let
\begin{equation}\label{weights and dominant weights}
	\mathcal{P} = \{ \lambda \in \mathfrak{h}^* \mid \lambda(H_i) \in \mathbb{Z} \} \quad \text{and} \quad \mathcal{P}^+ = \{ \lambda \in \mathfrak{h}^* \mid \lambda(H_i) \in \mathbb{Z}_+ \}
\end{equation}
denote the lattice of \textit{weights} and \textit{dominant weights}, respectively.

\paragraph{Bilinear form}
When $\lieg$ is semisimple, there exists a unique non-degenerate symmetric invariant bilinear form $\langle , \rangle\colon \lieg \times \lieg \to \mathbb{C}$ such that
\begin{gather*}
\begin{split}
	\langle H_i, H_j \rangle = \inv{d_j} a_{ij}, \quad \langle H_i, E_j \rangle = \langle H_i, F_j \rangle = 0, \\
	\langle E_i, E_j \rangle = \langle F_i, F_j \rangle =  0,  \quad \text{and} \quad \langle E_i, F_j \rangle = \inv{d_i} \delta_{ij},
\end{split}
\end{gather*}
for all $i, j$ \cite[Theorem~2.2]{kac_1985}. When $\lieg = \gln$ we take $\langle , \rangle$ to be the non-degenerate trace bilinear form of the natural representation.
	
The bilinear form $\langle , \rangle$ induces an isomorphism of vector spaces $\nu\colon \mathfrak{h} \to \mathfrak{h}^*$ under which $H_i$ corresponds to the \textit{coroot} $\alpha_i^\vee = \inv{d_i} \alpha_i$. The induced form on $\mathfrak{h}^*$ satisfies
\begin{equation}\label{inner prod on roots}
	\langle \alpha_i, \alpha_j \rangle = d_i a_{ij}.
\end{equation}
Since $a_{ii} = 2$, we must have $d_i = \langle \alpha_i, \alpha_i \rangle / 2$. Thus we obtain a formula for the \textit{Cartan integers}:
\begin{equation}\label{cartan ints as inn prod}
	a_{ij} = \langle \alpha_i^\vee, \alpha_j \rangle 
	= { 
		2 \langle \alpha_i^\vee, \alpha_j \rangle 
		\over 
		\langle \alpha_i, \alpha_i \rangle
	  }.
\end{equation} 
Notice the bilinear form induced by $\nu$ is normalized so that 
$$\langle \alpha_i, \alpha_i \rangle = 2d_i.$$
When $\lieg$ is simple\longer{, the Cartan matrix $A$ has entries $a_{ij} \in \{-3, -2, -1, 0, 2\}$, so without loss of generality we may assume that $D$ has entries $d_i \in \{1, 2, 3\}$, which implies} $\langle \alpha_i, \alpha_i \rangle = 2$ for \textit{short} roots.

For concreteness and convenience, we record some relevant Cartan matrices: 
\begin{align}\label{cartan mats}
\begin{gathered}
	A_n = 
	\begin{bmatrix*}[r]
	2  & -1     &        &        & \\
	-1 & 2      & -1     &        & \\
	   & \ddots & \ddots & \ddots & \\
	   &        & -1     & 2      & -1 \\
	   &        &        & -1     & 2
	\end{bmatrix*}, 
	\\ \\
	B_n = 
	\begin{bmatrix*}[r]
	\begin{matrix}
	& & &&\\
	& A_{n-1} &&& \\
	& & &&
	\end{matrix} \rvline
	& 
	\begin{matrix*}[c]
	0 \\ \vdots \\ 0 \\ -1
	\end{matrix*} \\
	\cmidrule(lr){1-1}
	\begin{matrix*}[l]
	0 & \cdots & 0 & -2
	\end{matrix*}
	& 2
	\end{bmatrix*}, 
	\quad \text{and} \quad 
	D_n = 
	\begin{bmatrix*}[r]
	\begin{matrix}
	& & &&&\\
	& A_{n-1} &&&& \\
	& & &&&
	\end{matrix} \rvline
	& 
	\begin{matrix*}[c]
	0 \\ \vdots \\ 0 \\ -1 \\ 0
	\end{matrix*} \\
	\cmidrule(lr){1-1}
	\begin{matrix*}[l]
	0 & \cdots & 0 & -1 & 0
	\end{matrix*}
	& 2
	\end{bmatrix*}.
\end{gathered}
\end{align}
We label the matrices by the Lie type of the root system to which they are associated. The corresponding diagonal   root lengths matrices are described by $d = (1, \ldots, 1)$ for the root systems of types $A_n, D_n$, and by $d = (2, \ldots, 2, 1)$ for type $B_n$.

\longer{If $\alpha \in \mathcal{Q}$, define the \textit{root space} 
$$\lieg_\alpha = \{ x \in \lieg \mid [h, x] = \alpha(h) x \text{ for all } h \in \mathfrak{h}\}.$$
The elements of $\Delta = \{ \alpha \in \mathcal{Q} \mid \alpha \neq 0, \lieg_\alpha \neq 0 \}$ are the \textit{roots} of $\lieg$. The intersection $\Delta^+ = \Delta \cap \mathcal{Q}^+$ denotes the set of \textit{positive roots}. The \textit{negative roots} are given by $\Delta^- = - \Delta^+$ and $\Delta = \Delta^+ \coprod \Delta^-$ \cite[Chapter 1]{kac_1985}. Distinct root spaces are orthogonal with respect to the bilinear form $\langle , \rangle$:
$$\langle \lieg_\alpha, \lieg_\beta \rangle \text{ if } \alpha \neq \beta \text{ and }  \langle \lieg_\alpha, \mathfrak{h} \rangle \text{ if } \alpha \neq 0$$
\cite[Theorem 2.2(c)]{kac_1985}. The Lie algebra $\lieg$ has a triangular decomposition with respect to its roots. As vector spaces, 
\begin{equation*}\label{triangular decomp g}
	\lieg = \big(\bigoplus_{\alpha < 0} \lieg_\alpha \big) \oplus \mathfrak{h} \oplus \big(\bigoplus_{\alpha > 0} \lieg_\alpha \big).
\end{equation*}}

We denote the half sum of the positive roots in $\mathfrak{h}^*$ by $\rho$. The \textit{Weyl vector} $\rho$ is uniquely characterized by $\langle \alpha_i^\vee, \rho \rangle = 1$. \longer{Thus its image $\rho^* \in \mathfrak{h}$ under $\inv{\nu}$ satisfies $\alpha_i(\rho^*) = d_i$. Concretely, if $b_i = \sum_j (\inv{A})_{ji} d_j$, then
\begin{equation}\label{weyl vector}
	\rho = \sum_{i} b_i \alpha_i^\vee \quad \text{and} \quad \rho^* = \sum_{i} b_i H_i.
\end{equation}

The \textit{fundamental weights} $\omega_i \in \mathfrak{h}^*$  are defined by $\omega_i(H_j) = \delta_{ij}$, or equivalently, $\langle \omega_i, \alpha_j^\vee \rangle = \delta_{ij}$. The $\omega_i$ form a basis of $\mathcal{P}$ and clearly
$$\alpha_i = \sum_j a_{ji} \omega_j.$$ 
Since $C = DA$ is symmetric $\alpha_i = \sum_j c_{ij} (\inv{d_j} \omega_j)$ and therefore
$$\omega_i = \sum_j (D\inv{A})_{ij} \alpha_j^\vee$$
because $C$ is invertible. Then
$\langle \omega_i, \omega_j \rangle = d_i (\inv{A})_{ij}$ follows from the definition of $\omega_i$. 

Now consider any weights $\lambda = \sum_i \lambda_i \omega_i$ and $\mu = \sum_i \mu_i \omega_i$, and notice  $\lambda(H_i) = \lambda_i$ and similarly $\mu(H_i) = \mu_i$. Therefore
\begin{equation}\label{scalar product of weights}
	\langle \lambda, \mu \rangle = \sum_{ij} \lambda_i \mu_j \langle \omega_i, \omega_j \rangle = \sum_{ij} (D\inv{A})_{ij} \lambda(H_i) \mu(H_j).
\end{equation}}

\paragraph{Casimir element}

Let $X_\alpha$ denote any basis of $\lieg$, and let $X^\alpha$ denote the corresponding dual basis with respect to $\langle \cdot, \cdot \rangle$. The next formula uniquely characterizes the \textit{Casimir element}
\begin{equation}\label{casimir elt}
C = \sum_{\alpha} X_{\alpha}  X^{\alpha}  = \sum_{\alpha} X^{\alpha}  X_{\alpha} \in \Ug.
\end{equation}
The Casimir element is in fact canonical and central \cite[Theorem~10.2]{Bump}. In addition, $C$ acts as the scalar 
\begin{equation}\label{casimir eigenvalues}
	\chi_{\lambda}(C) = \langle \lambda, \lambda + 2\rho\rangle
\end{equation}
on any irreducible $\Ug$-module with highest weight $\lambda$ \cite[Corollary~2.6]{kac_1985}.

\paragraph{Quantum groups and their representations} 

Throughout this work, the term ``quantum group'' refers to an associative Hopf algebra $\Uqg$ as presented in \cite[Definition~9.1.1]{chari_pressley_1994}. \longer{For convenience, we reproduce the definition below. 

Let $\lieg$ denote a semisimple Lie algebra and let $A$ denote its generalized Cartan matrix with the diagonal matrix $D = \mathrm{diag}(d_1, \ldots, d_r)$ such that $DA$ is symmetric. Let $q$ be \textit{indeterminate} and write $q_i = q^{d_i}$.
	The \textit{quantum group} $\Uqg$ is the associative unital algebra over $\mathbb{C}(q)$ generated by the elements $E_i, F_i, K_i$, and $\inv{K_i}$, for $i = 1, \ldots, r$, subject to the relations:
\begin{equation}\label{uq rels}
\begin{gathered}
K_i K_j = K_j K_i, \quad K_i \inv{K_i} = \inv{K_i} K_i = 1, \\
K_i E_j \inv{K_i} = q_i^{a_{ij}} E_j, \quad K_i 
F_j \inv{K_i} = q_i^{-a_{ij}} F_j,  \\
E_i F_j - F_j E_i = \delta_{ij} \frac{K_i - \inv{K_i}}{q_i - \inv{q_i}},
\end{gathered}	
\end{equation}
together with the $q$-Serre relations for $i \neq j$,
\begin{equation}\label{uq Serre rels}
	\sum_{k = 0}^{1-a_{ij}} (-1)^k 
	\begin{bmatrix}
	1 - a_{ij} \\ k
	\end{bmatrix}_{q_i} 
	X_i^k X_j X_i^{1-a_{ij}-k} = 0.
\end{equation}
In the last equality every $X$ is either an $E$ or an $F$. The $q$-integer is given by $[n]_{q_i} = \frac{q_i^n - q_i^{-n}}{q_i - \inv{q_i}}$, the $q$-factorial $[n]_{q_i}! = [n]_{q_i}\cdots [1]_{q_i}$, and the $q$-binomial coefficient is defined analogously.} We note the following identity is often useful for verifying $q$-Serre relations when $a_{ij} = -1$. Writing $[A, B]_q = AB - qBA$, we have
\begin{align}\label{alt uq Serre rels}
\begin{split}
	\sum_{k = 0}^{1-a_{ij}} (-1)^k 
	\begin{bmatrix}
	1 - a_{ij} \\ k
	\end{bmatrix}_{q_i} 
	X_i^k X_j X_i^{1-a_{ij}-k} 
		&= X_i (X_i X_j - q_i X_j X_i) - \inv{q_i} (X_i X_j - q_i X_j X_i) X_i \\
		&= [X_i, [X_i, X_j]_{q_i}]_{\inv{q_i}}.
\end{split}
\end{align}

Unless otherwise stated, we assume that $\Uqg$ is equipped with the co-algebra structure defined by the comultiplication map $\Delta\colon \Uqg \to \Uqg^{\otimes 2}$ satisfying
\begin{align}\label{delta convention}
	\begin{split}
		\Delta(E_i) &= E_i \otimes K_i + 1 \otimes E_i \\
		\Delta(F_i) &= F_i \otimes 1 + \kinv \otimes F_i, 
		\quad \text{and} \\
		\Delta(K_i) &= K_i \otimes K_i,
	\end{split}
\end{align}
as in \cite{lzz_2010}. The choice of comultiplication convention is important in \ddichotomy{\Cref{type A duality ch,orthogonal duality ch}}{defining commuting module actions}, where different conventions are needed in order to ensure that relevant quantum groups induce commuting endomorphism subalgebras.

In this work, we only consider finite-dimensional type $(1, \ldots, 1)$ modules for the quantum groups $\Uqg$. Refer to Chapters~9 and~10 in \cite{chari_pressley_1994} for definitions. Every type $(1, \ldots, 1)$ finite-dimensional $\Uqg$-module is semisimple \cite[Theorem~5.17]{J} and each simple $\Uqg$-module is in one-to-one correspondence with a $\lieg$-module parametrized by a dominant weight of $\lieg$ \cite[Theorem~5.10]{J}. Moreover,  corresponding modules have the same weight multiplicities \cite[Lemma~5.14]{J}.

\section{Classical skew duality via Clifford algebras}
\label{classical skew duality A}

In this section we (re)-prove the classical skew $GL_n \times GL_m$-duality \Cref{skew gln glm duality} using a double centralizer property inside a Clifford algebra. This theorem is well-known and there are various proofs in the literature. We develop a Clifford algebra argument here because it generalizes to the quantum case, as illustrated by \Cref{classical embeddings diagram,quantum group embeddings into clqnm}, allowing us to prove our skew $\Uqgln \otimes \Uqglm$-duality \Cref{uqgln uqglm duality} in \Cref{q skew duality type a}. The enveloping algebra $U(\mathfrak{gl}_p)$ more closely resembles the quantum group $U_q(\mathfrak{gl}_p)$, so we work at the Lie algebra level throughout.

We prove \Cref{skew gln glm duality} in three steps. First, in \Cref{ext_alg_as_cln_module} we show that for any complex vector space $V$ the $\mathfrak{gl}(V)$-action by derivations on the exterior algebra $\bigwedge(V)$ factors through the spin action of the Clifford algebra $Cl(V \oplus V^*)$. Then we  construct commuting embeddings of $\gln$ and $\glm$ into $\Clnm$ in \Cref{commuting_a_embeddings}. In fact, first we embed $\gln$ and $\glm$ into $\glnm$ and then use the map $\glnm \to \Clnm$ of \Cref{ext_alg_as_cln_module}. Finally, we compute a multiplicity-free decomposition of $\bigwedge(\mathbb{C}^{nm})$ as a $\gln \otimes \glm$-module in \Cref{mf_decomposition_a}.

	\subsection{A $\gln$-action on the Clifford spin module}
	\label{ext_alg_as_cln_module}

In \crossrefcliff{cln props} we define the Clifford algebra $Cl(V \oplus V^*)$ and its spin action on the exterior algebra $\bigwedge(V)$ via inner and exterior multiplication operators $\iota_f$ and $\varepsilon_v$. For convenience, recall 
\begin{align}\label{inner ext mult ops gln review}
	\iota_f(\eta) 
	= \sum_{j=1}^k (-1)^{j-1} f(v_j) 
		w_1 \wedge \cdots \wedge \widehat{w_j} \wedge \cdots \wedge w_k
	\quad \text{and} \quad 
	\varepsilon_v(\eta) = v \wedge \eta
\end{align}
for any $v \in V$, $f \in V^*$, and $\eta = w_1 \wedge \cdots \wedge w_k \in \bigwedge(V)$. Our treatment is fairly standard and may be found in various sources. For instance, see \cite[Chapter~31]{Bump}, \cite[Chapter~6]{GW}, or \cite[Chapters~1-2]{michelsohn_lawson}. The inner multiplication operators are sometimes known as \textit{contractions} \cite[Chapter~6]{GW}. When necessary or convenient, we view the Clifford algebra as a Lie algebra with bracket given by the usual algebra commutator: $[A, B] = AB - BA$.

In this section we show that the natural $GL(V)$-action on $V$ induces a $\mathfrak{gl}(V)$-action on $\bigwedge(V)$ by derivations that factors through $Cl(V \oplus V^*)$. In particular, we construct a Lie algebra map $\Phi_n$ in \Cref{gln cl embedding}  making the following diagram commute.
\begin{equation}\label[diag]{gln factors through cl}
	\begin{tikzcd}
		Cl(V \oplus V^*)  \ar[r, "\cong"] & \End\left(\bigwedge(V)\right) \\
		\mathfrak{gl}(V) \ar[u, "\Phi_n"] \ar[ur] &
	\end{tikzcd}
\end{equation}

The constructions in this section generalize to the quantized setting. In particular, \Cref{braided_ext_alg} defines maps making \Cref{uqgln clq embedding compatibility diag} commute, which are key ingredients in the proof our quantized skew $\Uqgln \otimes \Uqglm$-duality \Cref{uqgln uqglm duality}.

It all starts with the natural $GL(V)$-action. The general linear group $GL(V)$ acts on $V$ by matrix multiplication and the diagonal maps $\delta^{(j)}(g) = g \otimes \cdots \otimes g$ extend this action to the tensor algebra $T(V) = \bigoplus_{j=0}^\infty V^{\otimes j}$. Differentiating the $GL(V)$-action on $T(V)$ by automorphisms yields a $\mathfrak{gl}(V)$-action by derivations. In fact, it yields an action of the universal enveloping algebra $U(\mathfrak{gl}(V))$ making the following diagram commute for each $j = 0, 1, 2, \ldots$. In the diagram, $\Delta\colon U(\mathfrak{gl}(V)) \to U(\mathfrak{gl}(V))^{\otimes 2}$ denotes the comultiplication of $U(\mathfrak{gl}(V))$.
\begin{equation}\label[diag]{GLV exponentiation diagram}
	\begin{tikzcd}[column sep=large]
		GL(V) \ar[r, "\delta^{(j-1)}"]
		& \End(V^{\otimes j}) \\
		\mathfrak{gl}(V) \ar[u, "\exp"] \ar[ur, "\Delta^{(j-1)}", swap]
		&
	\end{tikzcd}
\end{equation}

We define the exterior algebra $\bigwedge(V)$ and the induced $\mathfrak{gl}(V)$-action it carries by considering $U(\mathfrak{gl}(V))$ as a cocommutative Hopf algebra. We take the Hopf algebra point of view here in order to motivate the construction of the \textit{braided} exterior algebra in \Cref{braided_ext_alg}: in both cases we consider a tensor algebra modulo an ideal generated by certain eigenvectors of braiding operators induced by an $R$-matrix. The enveloping algebra $U(\mathfrak{gl}(V))$ is cocommutative, so it is (trivially) a \textit{braided bialgebra} \cite[Definition~VIII.2.2]{Kassel}. That is, $1 \otimes 1$ is an $R$-matrix of $U(\mathfrak{gl}(V))$. The braiding operator on $V \otimes V$ induced by $R = 1 \otimes 1$ is the flip map $\tau$ taking $v \otimes w \to w \otimes v$. Braiding operators are module maps, so the $\mathfrak{gl}(V)$-action on $T(V)$ preserves the set
\begin{equation}\label{classical sym2 def}
	 \mathrm{Sym}^2(V) = \{ v\otimes w + w \otimes v \mid v, w \in V\}
\end{equation}
of $+1$ eigenvectors of $\tau$. Therefore, the $\mathfrak{gl}(V)$-action descends to the \textit{exterior algebra}
\begin{equation}\label{exterior algebra def}
	\textstyle \bigwedge(V) \coloneqq T(V) / \langle \mathrm{Sym}^2(V) \rangle.
\end{equation}
Note that $\bigwedge(V)$ inherits the $\mathbb{Z}$-grading on $T(V)$ because $\langle \mathrm{Sym}^2(V) \rangle$ is homogeneous.

\Cref{braided_ext_alg} defines the braided exterior algebra $\bigwedge_q(\Vn)$, a quantum analogue of $\bigwedge(V)$, in a similar fashion, using braiding operators induced by the (non-trivial) $R$-matrix of the quantum group $\Uqgln$; compare \Cref{exterior algebra def} with \Cref{braided ext alg as tensor alg quotient}.

Identifying elements of $\mathfrak{gl}(V)$ with their counterparts in $V \otimes V^*$ allows us to factor the $\mathfrak{gl}(V)$-action on $\bigwedge(V)$ through $Cl(V \oplus V^*)$. Recall that there is a canonical isomorphism of vector spaces $\mathfrak{gl}(V) \cong \End(V) \cong V \otimes V^*$ satisfying
$$(v \otimes f)(w) = f(w) v, \quad v, w \in V, \,\, f \in V^*.$$
At the level of elementary linear algebra, this isomorphism says that square matrices are linear combinations of rank 1 outer products. It becomes an isomorphism of Lie algebras when we define a multiplication on $V \otimes V^*$ using the composition inside $\End(V)$ and take the Lie bracket defined by the usual algebra commutator. Concretely, the product $(v \otimes f) (w \otimes h)$ in $V \otimes V^*$ satisfies
$$\big((v \otimes f) \circ (w \otimes h)\big) (u) = h(u)f(w)v = \big(f(w) v \otimes h\big)(u), \quad u, v, w \in V, \,\, f, h \in V^*.$$
\longer{Lemma~II.2.5 in \cite{Kassel} explains this multiplication in terms of the \textit{evaluation map} $\mathrm{ev}_V\colon V \otimes V^* \to \mathbb{C}$ defined by $\mathrm{ev}_V(u \otimes f) = f(u)$. As discussed in \Cref{centralizers ribbon cat}, $\mathrm{ev}_V$ features in the definition of the category finite-dimensional $U(\mathfrak{gl}(V))$-modules as a \textit{ribbon category}.}\par

Since $\mathfrak{gl}(V)$ acts on $\bigwedge(V)$ by derivations, the action of any $X$ in $\mathfrak{gl}(V)$, identified with $u \otimes f$ in $V \otimes V^*$, factors into an inner and an exterior multiplication:
\begin{align}\label{glv action by derivations}
	\begin{split}
		X \rhd v_1 \wedge \cdots \wedge v_k 
		&= (Xv_1) \wedge v_2 \wedge \cdots \wedge v_k 
			+ \cdots +  
			v_1 \wedge \cdots \wedge (Xv_k)  \\
		&= u \wedge \left(\sum_{j=1}^k (-1)^{k-1} f(v_j) v_1 \wedge \cdots \wedge \widehat{v}_j \wedge \cdots v_k \right).
	\end{split}
\end{align}
Recall the action of inner and exterior multiplication operators $\iota_f$ and $\varepsilon_v$ as in \Cref{inner ext mult ops gln review}. In \Cref{glv action by derivations}, $\widehat{v}_j$ means that $v_j$ is omitted from the wedge product.

\Cref{glv action by derivations} is key. It shows that the element $X = v \otimes f$ in $\mathfrak{gl}(V) \cong V \otimes V^*$ acts on $\bigwedge(V)$ as the Clifford product
\begin{equation}\label{glv action by inner and exterior mult}
	X = \varepsilon_v\iota_f.
\end{equation}
\Cref{glv action by inner and exterior mult} defines a Lie algebra map $\mathfrak{gl}(V) \to Cl(V \oplus V^*)$ making \Cref{gln factors through cl} commute, thereby achieving this subsection's goal. The map is injective, so $\{\varepsilon_v\iota_f \mid v \in V, \; f \in V^*\}$ generates a subalgebra in $Cl(V \oplus V^*) \cong \End(\bigwedge(V))$ identical to $\mathfrak{gl}(V)$.

However, \Cref{glv action by inner and exterior mult} defines the map $\mathfrak{gl}(V) \to Cl(V \oplus V^*)$ canonically. We are ultimately interested in a quantum version of this embedding, and operators in the quantum setting are described by their action on a weight basis. Thus, in anticipation of the quantum case, we now describe the homomorphism defined by \Cref{glv action by inner and exterior mult} explicitly in terms of a $\mathfrak{gl}(V)$-weight basis. 

To this end, let $v_i$, for $i = 1, \ldots, n$, denote a $\mathfrak{gl}(V)$-weight basis of $V$ and let $v_j^*$ denote the corresponding dual basis of $V^*$, defined by $v_j^*(v_i)  = \delta_{ij}$. Then the
\begin{align}\label{bar v ext alg basis review}
	\bar{v}(\ell) = \begin{cases}
		v_1^{\ell_1} \wedge v_2^{\ell_2} \wedge \cdots \wedge v_n^{\ell_n}, & \text{if } \ell_i \in \{0, 1\} \\
		0, & \text{otherwise},
	\end{cases}
\end{align}
for $\ell \in \{0, 1\}^n$, form a basis of $\bigwedge(V)$\longer{ as in \Cref{cln props}}.

Our choice of weight basis defines an isomorphism $V \cong \mathbb{C}^n$; from now on, $\gln$ denotes $\mathfrak{gl}(V) \cong \mathfrak{gl}(\mathbb{C}^n)$. We work with the presentation of $\gln$ obtained from the Chevalley presentation of $\mathfrak{sl}_n$ described by \Cref{chevalley presentation of g,Serre rels of g} by adjoining elements $\bar{L}_i$, for $i = 1, \ldots, n$, subject to the relations
$$H_i = \bar{L}_i - \bar{L}_{i+1}, \quad \text{and} \quad [\bar{L}_i, \bar{L}_j] = 0.$$
The $\bar{L}_i$ correspond to a basis of $\mathfrak{h}^*$ that is orthonormal with respect to the trace bilinear form on $V$.
\begin{rmk}
	We use an overbar in this section to distinguish $\bar{v}(\ell)$ in $\bigwedge(V)$ from $v(\ell)$ in the braided exterior algebra $\bigwedge_q(\Vn)$ defined in \Cref{q skew duality type a}. Similarly, we use an overbar to distinguish the $\bar{L}_i$ in $\gln$ from the $\Uqgln$ generators introduced by \Cref{uqgln def}.
\end{rmk}

As in \Cref{glv action by inner and exterior mult}, we obtain the map $\gln \to \Cln$ by identifying elements of $\gln$ with their counterparts in $\mathbb{C}^n \otimes (\mathbb{C}^n)^*$. This time we identify generators \textit{only}. Let $M_{ij} v_k = \delta_{jk} v_i$ denote the matrix units with respect to the $v_i$ basis and recall that there is an isomorphism $\gln \to \mathrm{Mat}_n(\mathbb{C})$ satisfying
\begin{equation}\label{glv matrices}
	E_i \to M_{i,i+1}, \quad
	F_i \to M_{i+1, i}, 
	\quad \text{and} \quad
	\bar{L}_i \to M_{i,i}.
\end{equation}
Under this isomorphism, the $\bar{L}_i$ become matrices of unit trace. Composing with $\mathrm{Mat}_n(\mathbb{C}) \cong \mathbb{C}^n \otimes (\mathbb{C}^n)^*$ identifies the $\gln$ generators as
\begin{equation}\label{glv into v vstar}
	E_i \to v_i v_{i+1}^*, \quad
	F_i \to v_{i+1} v_i^*, 
	\quad \text{and} \quad
	\bar{L}_i \to v_i v_i^*.
\end{equation}

On the Clifford algebra side, we consider the generators $\psi_i = \iota_{v_i^*}$ and $\psi_i^\dagger = \varepsilon_{v_i}$, for $i = 1, \ldots, n$, as in \Cref{inner ext mult ops gln review}. The $\psi_i$ and $\psi_j^\dagger$ satisfy the canonical anticommutation relations
\begin{align}\label{cac review}
	\begin{gathered}
		\psi_i \psi_j + \psi_j \psi_i = \psi_i^\dagger \psi_j^\dagger + \psi_j^\dagger \psi_i^\dagger = 0, \\
		\psi_i^{\pdg} \psi_j^\dagger + \psi_j^\dagger \psi_i^{\pdg} = \delta_{ij},
	\end{gathered}
\end{align}
and they act on $\bigwedge(V)$ by lowering and raising operators: for any $\bar{v}(\ell)$,
\begin{align}\label{inner ext operator action review}
	\begin{split}
		\psi_i \, \bar{v}(\ell) &= (-1)^{\ell_1 + \cdots + \ell_{i-1}} \bar{v}(\ell - e_i), \text{ and} \\
		\psi_i^\dagger \, \bar{v}(\ell) &= (-1)^{\ell_1 + \cdots + \ell_{i-1}} \bar{v}(\ell + e_i).
	\end{split}
\end{align}

The next proposition describes the homomorphism $\gln \to \Cln$ explicitly by composing the map defined by \Cref{glv into v vstar} with the natural inclusion $\gamma\colon\mathbb{C}^n \otimes (\mathbb{C}^n)^* \hookrightarrow \Cln$ \longer{of \Cref{embedding base space into clifford alg}}.

\begin{prop}\label[prop]{gln cl embedding}
	Suppose $n = \dim V$. There is a homomorphism of Lie algebras $\Phi_n\colon \gln \to \Cln$ satisfying
	\begin{align*}
		E_i \to \psi_i^\dagger \psi_{i+1}^{\pdg}, \quad
		F_i \to \psi_{i+1}^\dagger \psi_i^{\pdg}, 
		\quad \text{and} \quad
		\bar{L}_i \to \psi_i^\dagger \psi_i^{\pdg},
	\end{align*}
	for each $E_i, F_i$, and $\bar{L}_i$ in $\gln$.
\end{prop}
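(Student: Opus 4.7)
The plan is to reduce the verification to a single universal commutation identity in $Cl(n)$. Under the matrix presentation \eqref{glv matrices}, the Chevalley generators arise as special matrix units: $E_i = M_{i,i+1}$, $F_i = M_{i+1,i}$, and $\bar{L}_i = M_{ii}$. Since $\mathfrak{gl}_n \cong \mathrm{Mat}_n(\mathbb{C})$ is spanned (as a Lie algebra) by the $M_{ij}$ with bracket $[M_{ij}, M_{kl}] = \delta_{jk} M_{il} - \delta_{il} M_{kj}$, it suffices to define $\Phi_n$ linearly by $M_{ij} \mapsto \psi_i^\dagger \psi_j^{\pdg}$ and verify it respects the matrix-unit bracket. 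The formulas claimed in the statement are then just the restrictions to the Chevalley generators.

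The heart of the proof is thus the identity
\[
[\psi_i^\dagger \psi_j^{\pdg},\; \psi_k^\dagger \psi_l^{\pdg}] = \delta_{jk}\, \psi_i^\dagger \psi_l^{\pdg} - \delta_{il}\, \psi_k^\dagger \psi_j^{\pdg}
\]
in $Cl(n)$, which I would derive as follows. Using \eqref{cac review}, push $\psi_j^{\pdg}$ past $\psi_k^\dagger$ to get $\psi_i^\dagger \psi_j^{\pdg} \psi_k^\dagger \psi_l^{\pdg} = \delta_{jk}\, \psi_i^\dagger \psi_l^{\pdg} - \psi_i^\dagger \psi_k^\dagger \psi_j^{\pdg} \psi_l^{\pdg}$, and analogously $\psi_k^\dagger \psi_l^{\pdg} \psi_i^\dagger \psi_j^{\pdg} = \delta_{il}\, \psi_k^\dagger \psi_j^{\pdg} - \psi_k^\dagger \psi_i^\dagger \psi_l^{\pdg} \psi_j^{\pdg}$. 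The two four-fold products then cancel after two sign flips ($\psi_i^\dagger \psi_k^\dagger = -\psi_k^\dagger \psi_i^\dagger$ and $\psi_j^{\pdg} \psi_l^{\pdg} = -\psi_l^{\pdg} \psi_j^{\pdg}$), yielding the desired formula.

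Once this is in hand, restricting to pairs $(i,j), (k,l)$ coming from the Chevalley generators recovers every required relation for free: $[\bar{L}_i, \bar{L}_j] = 0$ is immediate; the computation $[\psi_i^\dagger \psi_{i+1}^{\pdg},\, \psi_{j+1}^\dagger \psi_j^{\pdg}] = \delta_{ij}(\psi_i^\dagger \psi_i^{\pdg} - \psi_{i+1}^\dagger \psi_{i+1}^{\pdg})$ gives $[E_i, F_j] = \delta_{ij} H_i$; and similar one-line computations yield $[H_i, E_j] = a_{ij} E_j$ and $[H_i, F_j] = -a_{ij} F_j$ for the type $A_{n-1}$ Cartan matrix. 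The Serre relations are likewise automatic once the bracket is shown to match the matrix commutator, since they already hold in $\mathrm{Mat}_n(\mathbb{C})$.

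The only real obstacle is bookkeeping: one must track signs carefully when anticommuting $\psi^\dagger$s past $\psi$s, and be sure that the two leftover quartic terms really do cancel rather than add. Beyond that, the argument is essentially a single application of the CAR followed by a recognition that matrix units and the operators $\psi_i^\dagger \psi_j^{\pdg}$ satisfy identical commutation laws, at which point injectivity follows from the fact that the $\psi_i^\dagger \psi_j^{\pdg}$ are linearly independent in $Cl(n)$ (they act as distinct rank-one operators in $\mathrm{End}(\bigwedge(V))$ via \eqref{inner ext operator action review}).
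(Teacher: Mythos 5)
Your proof is correct. The commutator identity $[\psi_i^\dagger \psi_j^{\pdg},\, \psi_k^\dagger \psi_l^{\pdg}] = \delta_{jk}\, \psi_i^\dagger \psi_l^{\pdg} - \delta_{il}\, \psi_k^\dagger \psi_j^{\pdg}$ is exactly right, and your cancellation of the two quartic terms via the double sign flip is the correct bookkeeping. The paper itself disposes of the proposition in one line, by observing that $\Phi_n$ is the composition of known Lie algebra maps $\gln \cong \mathrm{Mat}_n(\mathbb{C}) \cong \mathbb{C}^n \otimes (\mathbb{C}^n)^* \subset \Cln$, where the last inclusion sends $v \otimes f$ to the Clifford product $\varepsilon_v \iota_f$ as in \eqref{glv action by inner and exterior mult}. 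What you have done is verify directly, inside $\Cln$, the fact that argument quietly relies on: that $v \otimes f \mapsto \varepsilon_v \iota_f$ intertwines the composition bracket on $V \otimes V^*$ with the Clifford commutator — in basis-free form, $[\varepsilon_v \iota_f, \varepsilon_w \iota_h] = f(w)\,\varepsilon_v \iota_h - h(v)\,\varepsilon_w \iota_f$. So the two proofs are the same mathematics packaged differently: the paper delegates the key identity to the preceding discussion and an external reference for the inclusion $\gamma$, while yours is self-contained and has the advantage of extending $\Phi_n$ to all matrix units at once (which the surrounding text uses implicitly, e.g.\ in \eqref{gln ei as matrix units}). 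Two small remarks: the Serre relations need no separate check once the bracket is matched, as you say, since they are identities in $\mathrm{Mat}_n(\mathbb{C})$; and injectivity, while not asserted in the statement, follows as you note from faithfulness of the spin representation \eqref{inner ext operator action review}.
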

\begin{proof}
	As explained above, this map is a composition of known Lie algebra maps: $\gln \cong \mathrm{Mat}_n(\mathbb{C}) \cong \mathbb{C}^n \otimes (\mathbb{C}^n)^* \subset \Cln$.
\end{proof}

This proposition is the classical precursor of \Cref{uqgln clqn embedding}.

	\subsection{Commuting $\gln$ and $\glm$ embeddings into the Clifford algebra}
	\label{commuting_a_embeddings}

Now suppose that $V = U \otimes W$ with $\dim U = n$ and $\dim W = m$. In this subsection, we construct commuting embeddings of $\mathfrak{gl}(U)$ and $\mathfrak{gl}(W)$ into the Clifford algebra
$$Cl\left((U \otimes W) \oplus (U \otimes W)^*\right) \cong \End\left(\bigwedge(U \otimes W)\right)$$
in \Cref{gln embedding into clnm,glm embedding into clnm}. These embeddings are depicted in \Cref{classical embeddings diagram} and they serve as the foundation for our proof the classical skew duality \Cref{skew gln glm duality}.

As in \Cref{ext_alg_as_cln_module}, the story begins with an action by matrix multiplication. The group $GL(U)$ acts on the tensor product $U \otimes W$ via its action on the first factor:
$$g \rhd (u \otimes w) = (gu) \otimes w, \qquad g \in GL(U), \,\, v \in U, \,\, w \in W.$$
Similarly, $GL(W)$ acts on $U \otimes W$ through its natural action on the second factor. These actions commute, so they induce a group homomorphism
$$GL(U) \times GL(W) \hookrightarrow GL(U \otimes W) = GL(V)$$
that defines commuting actions on the $GL(V)$-module $\bigwedge(V) = \bigwedge(U \otimes W)$. We compose the differentials of these actions with the map $\mathfrak{gl}(V) \to Cl(V \oplus V^*)$ defined by \Cref{glv action by inner and exterior mult} to obtain commuting actions of $\mathfrak{gl}(U)$ and $\mathfrak{gl}(W)$ on $\bigwedge(V)$ that factor through $Cl(V \oplus V^*)$.

Since we have an eye on the quantum case, we will define the embeddings of $\mathfrak{gl}(U)$ and $\mathfrak{gl}(W)$ into $Cl\left((U \otimes W) \oplus (U \otimes W)^*\right)$ explicitly with respect to a $\mathfrak{gl}(V)$-weight basis of $V = U \otimes W$. We start $U$ and $W$: let $u_i$ and $w_j$ denote $\mathfrak{gl}(U)$- and $\mathfrak{gl}(W)$-weight bases of $U$ and $W$, and let $u_i^*$ and $w_j^*$ denote the corresponding dual bases of $U^*$ and $W^*$. Then the products $u_i \otimes w_j$, for $i = 1, \ldots, n$ and $j = 1, \ldots, m$, define a $\mathfrak{gl}(V)$-weight basis of $V$ because
\begin{equation}\label{glv factors}
	\mathfrak{gl}(V) 
		\cong
	\End(U \otimes W)
		\cong 
	(U \otimes U^*) \otimes (W\otimes W^*) 
		\cong 
	\mathfrak{gl}(U) \otimes \mathfrak{gl}(W).
\end{equation}
Setting
\begin{equation}\label{vk basis}
	v_{i + (j-1)n} = u_i \otimes w_j
\end{equation}
arranges this basis in the \textit{column-major order} $u_1 \otimes w_1, \ldots u_n \otimes w_1$, $u_1 \otimes w_2, \ldots, u_n \otimes w_2$, $\ldots, u_1 \otimes w_m, \ldots, u_n \otimes w_m$ and defines isomorphism $\mathbb{C}^{nm} \cong U \otimes W$. We then obtain a $\mathfrak{gl}(V)$-weight basis of $\bigwedge(V) \cong \bigwedge(\mathbb{C}^{nm})$ by considering the $\bar{v}(\ell)$ as in \Cref{bar v ext alg basis review} for $\ell \in \{0, 1\}^{nm}$.

In the quantized setting of \Cref{commuting quantum actions section}, the isomorphism defined by \Cref{vk basis} allows us to consider the braided exterior algebra $\bigwedge_q(V^{(nm)})$ defined using the $R$-matrix of the single quantum group $\Uqglnm$, instead of the braided exterior algebra $\bigwedge_q(\Vn \otimes V^{(m)})$ used in \cite{lzz_2010}, which requires the $R$-matrices of both $\Uqgln$ and $\Uqglm$.

Having chosen explicit weight bases of $U$, $W$, and $V = U \otimes W$, we refer to $\mathfrak{gl}(U) \cong \mathfrak{gl}(\mathbb{C}^n)$, $\mathfrak{gl}(W) \cong \mathfrak{gl}(\mathbb{C}^m)$, and $\mathfrak{gl}(U \otimes W) \cong \mathfrak{gl}(\mathbb{C}^{nm})$ as $\gln$, $\glm$, and $\glnm$.

The Clifford algebra $\Clnm$ is generated by the inner and exterior multiplication operators $\psi_k = \iota_{v_k^*}$ and $\psi_k^\dagger = \varepsilon_{v_k}$ \longer{of \Cref{inner and ext mult maps}}, for $k = 1, \ldots, nm$, because the functionals $v_{i + (j-1)n}^* = u_i^* \otimes w_j^*$ define a basis of $(\mathbb{C}^{nm})^* \cong (U \otimes W)^*$ dual to $v_1, \ldots, v_{nm}$. As in \Cref{ext_alg_as_cln_module}, the $\psi_k$ and $\psi_k^\dagger$ satisfy the canonical anticommutation relations \eqref{cac review} and they act on $\bigwedge(U \otimes W)$ as in \Cref{inner ext operator action review}. In particular, the operator $\psi_{i + (j-1)n}$ \textit{vacates} the $(i,j)$ position in the $n \times m$ grid defined by $\bar{v}(\ell)$, while $\psi_{i + (j-1)n}^\dagger$ \textit{occupies} it.

\Cref{gln embedding into clnm,glm embedding into clnm} define the desired embeddings of $\gln$ and $\glm$ into $\Clnm \cong \End\left(\bigwedge(U \otimes W)\right)$. These maps rely crucially on the factorizations $\mathfrak{gl}(U \otimes W) \cong \mathfrak{gl}(U) \otimes \mathfrak{gl}(W)$ and $X = \varepsilon_u \iota_f$ of Relations \eqref{glv factors} and \eqref{glv action by inner and exterior mult}: the former takes an element in $\gln$ or $\glm$ into $\glnm$ by tensoring with the identity $I$ and then the latter identifies the resulting element in $\glnm \cong \mathbb{C}^{nm} \otimes (\mathbb{C}^{nm})^*$ with a Clifford product in $\Clnm$.

Therefore the first step is to describe the maps $\gln \to \glnm$ and $\glm \to \glnm$ explicitly. The isomorphism $\mathfrak{gl}(V) \cong \mathfrak{gl}(U) \otimes \mathfrak{gl}(W)$ of \Cref{glv factors} implies that $\mathfrak{gl}(U) \cong \mathfrak{gl}(U) \otimes \id_W \subset \mathfrak{gl}(V)$. Identifying the elements $X \otimes I$, with $X$ a generator in $\gln \cong \mathrm{Mat}_n(\mathbb{C}) \cong \mathbb{C}^n \otimes (\mathbb{C}^*)^n$, with their counterparts in $\mathrm{Mat}_{nm}(\mathbb{C}) \cong \glnm$ results in the map $\gln \to \glnm$. For instance, the simple positive root vector $E_i^{(n)}$ in $\gln$ is identified with $u_i \otimes u_{i+1}^*$ in $\mathbb{C}^n \otimes (\mathbb{C}^n)^*$, and the matrix $I$ is identified with the canonical vector $\sum_{j=1}^m w_j \otimes w_j^*$ in $\mathbb{C}^m \otimes (\mathbb{C}^m)^*$, so the map $\gln \to \glnm$ takes
\begin{align}\label{gln ei as matrix units}
	 \begin{split}
	 	E_i^{(n)} &\to \sum_{j=1}^m u_i \otimes u_{i+1}^* \otimes w_j \otimes w_j^* \\
	 	&\leftrightarrow \sum_{j=1}^m M_{i + (j-1)n, \, i+1 + (j-1)n} \\
	 	&\leftrightarrow \sum_{j=1}^m E_{i + (j-1)n}^{(nm)}.
	 \end{split}
\end{align}
As usual, the $M_{a b}$ denote matrix units defined by $M_{ab} v_{c} = \delta_{bc} v_{a}$. The third identity is a consequence of the isomorphism $\mathfrak{gl}_{nm}  \to \mathrm{Mat}_{nm}(\mathbb{C})$, as in \Cref{glv matrices}.

\begin{rmk}\label[rmk]{generator superscript}
	The superscript on a Chevalley generator indicates the algebra to which it belongs. For instance, $E_i^{(n)}$ is an element of $\mathfrak{gl}_n$ while $E_{i + (j-1)n}^{(nm)}$ lives in $\glnm$.
\end{rmk}

The matrix units in \Cref{gln ei as matrix units} shift certain occupied positions upwards, to a previous \textit{adjacent} position in the column-major order defined by \Cref{vk basis}. This action coincides with the effect of the simple root vectors in $\glnm$, as illustrated in \Cref{root vector action on basis}, so we obtain the following proposition. 

\begin{prop}\label[prop]{gln embedding into glnm}
	There is a Lie algebra homomorphism $\mathfrak{gl}_n \to \glnm$ mapping
	\begin{align*}
		X_i^{(n)} &\to \sum_{j=1}^m X_{i + (j-1)n}^{(nm)},
	\end{align*}
	where $X_a$ denotes one of $E_a$, $F_a$, or $\bar{L}_a$.
\end{prop}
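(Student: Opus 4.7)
The plan is to realize the map $\gln \to \glnm$ as the restriction of the canonical inclusion $\mathfrak{gl}(U) \hookrightarrow \mathfrak{gl}(U) \otimes \mathfrak{gl}(W) \cong \mathfrak{gl}(V)$ defined by $X \mapsto X \otimes \id_W$. Because this is an embedding of a Lie subalgebra, the homomorphism property is automatic; the substance of the statement is therefore to verify the explicit formulas relating the Chevalley generators $X_i^{(n)}$ of $\gln$ to Chevalley generators $X_{i+(j-1)n}^{(nm)}$ of $\glnm$ under the column-major identification of bases fixed in \Cref{vk basis}.

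First I would use the isomorphism $\glnm \cong \mathfrak{gl}(V)$ afforded by \Cref{vk basis} together with the dictionary between Chevalley generators and matrix units from \Cref{glv matrices}, which reads $E_k^{(nm)} \leftrightarrow M_{k, k+1}$, $F_k^{(nm)} \leftrightarrow M_{k+1, k}$, and $\bar{L}_k^{(nm)} \leftrightarrow M_{k,k}$. The case $X = E$ is already carried out in \eqref{gln ei as matrix units}: the element $E_i^{(n)} \otimes \id_W$ corresponds to $\sum_{j=1}^m u_i \otimes u_{i+1}^* \otimes w_j \otimes w_j^*$, which under $\mathfrak{gl}(V) \cong \mathrm{Mat}_{nm}(\mathbb{C})$ becomes $\sum_{j=1}^m M_{i+(j-1)n,\, i+1+(j-1)n} = \sum_{j=1}^m E_{i+(j-1)n}^{(nm)}$.

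Next I would repeat the same bookkeeping for $F_i^{(n)}$ and $\bar{L}_i^{(n)}$. Computing $F_i^{(n)} \otimes \id_W$ on the basis vector $v_{i+(j-1)n} = u_i \otimes w_j$ sends it to $u_{i+1} \otimes w_j = v_{i+1+(j-1)n}$ and annihilates $v_{k+(j-1)n}$ for $k \neq i$, giving the matrix expression $\sum_{j=1}^m M_{i+1+(j-1)n,\, i+(j-1)n} = \sum_{j=1}^m F_{i+(j-1)n}^{(nm)}$. Similarly, $\bar{L}_i^{(n)} \otimes \id_W$ acts as the identity on each $v_{i+(j-1)n}$ and annihilates all other basis vectors, yielding $\sum_{j=1}^m M_{i+(j-1)n,\, i+(j-1)n} = \sum_{j=1}^m \bar{L}_{i+(j-1)n}^{(nm)}$.

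The only point requiring care is ensuring the column-major indexing is applied consistently, so that the indices of the matrix units produced by the computation differ by exactly $1$ in the $E$ and $F$ cases and therefore really do correspond to \emph{simple} root vectors of $\glnm$ rather than to non-simple ones. This is automatic with the ordering of \Cref{vk basis}: incrementing $i$ by one within a fixed column $j$ shifts the compound index $i + (j-1)n$ by exactly one. Hence no genuine obstacle arises and the proof reduces to the subalgebra inclusion together with this straightforward translation of matrix units into Chevalley generators.
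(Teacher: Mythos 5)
Your proposal is correct and follows essentially the same route as the paper: the map is the composition $\gln \cong (U\otimes U^*)\otimes \id_W \subset \End(U)\otimes\End(W)\cong \glnm$ of known Lie algebra maps, with the explicit generator formulas read off from the matrix-unit computation of \eqref{gln ei as matrix units}. The only difference is that you spell out the $F_i$ and $\bar L_i$ bookkeeping that the paper leaves implicit.
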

\begin{proof}
As explained by \Cref{gln ei as matrix units}, this map is the composition 
	$\gln \cong \left(U \otimes U^*\right) \otimes \id_W \subset \End(U) \otimes \End(W) \cong \glnm$ of known Lie algebra maps.
\end{proof}

Dually, the factorization $\mathfrak{gl}(V) \cong \End(U) \otimes \End(W) \cong \mathfrak{gl}(U) \otimes \mathfrak{gl}(W)$ of \Cref{glv factors} implies that $\mathfrak{gl}(W) \cong \id_U \otimes \mathfrak{gl}(W) \subset \mathfrak{gl}(V)$. In this case $\id_U$ corresponds to $\sum_{i=1}^n u_i \otimes u_i^*$ in $\mathbb{C}^n \otimes (\mathbb{C}^n)^*$, so the map $\glm \to \glnm \cong \mathrm{Mat}_{nm}(\mathbb{C})$ satisfies
\begin{align}\label{glm ej as matrix units}
	\begin{split}
		E_j^{(m)} &\to \sum_{i=1}^n u_i \otimes u_i^* \otimes w_j \otimes w_{j+1}^* \\
		&\leftrightarrow \sum_{i=1}^n M_{i + (j-1)n, \, i + jn}.
	\end{split}
\end{align}

In this case, the matrix units in \Cref{glm ej as matrix units} shift certain occupied positions leftwards, to \textit{non}-adjacent positions in the column-major order defined by \Cref{vk basis}. These matrix units no longer correspond to \textit{simple} root vectors in $\glnm$; however, they are identified to non-simple root vectors given by nested commutators.

\begin{prop}\label[prop]{glm embedding into glnm}
	There is a Lie algebra homomorphism $\glm \to \glnm$ satisfying
	\begin{align*}
		E_j^{(m)} 
			&\to 
		\sum_{i=1}^n 
			\left[\big[[E_{i + (j-1)n}^{(nm)}, \, E_{i + 1 + (j-1)n}^{(nm)}], E_{i+2 + (j-1)n}^{(nm)}\big], \cdots, E_{i-1 + jn}^{(nm)} \right], 
		\quad \text{and} \\
		F_j^{(m)} 
			&\to 
		\sum_{i=1}^n 
			\left[\big[[F_{i-1+jn}^{(nm)}, \, F_{i-2 + jn}^{(nm)}], F_{i-3 + jn}^{(nm)}\big], \cdots, F_{i+(j-1)n}^{(nm)} \right].
	\end{align*}
\end{prop}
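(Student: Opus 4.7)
The plan is to construct this Lie algebra homomorphism as the composition
\[
\glm \xrightarrow{\sim} \id_U \otimes \glm \hookrightarrow \mathfrak{gl}(U) \otimes \mathfrak{gl}(W) \cong \mathfrak{gl}(U\otimes W) \cong \glnm
\]
arising from the factorization in \Cref{glv factors}, entirely parallel to the proof of \Cref{gln embedding into glnm}. This composition is manifestly a Lie algebra map, so the only real content is verifying that the images of the Chevalley generators under this composition agree with the nested commutator expressions asserted in the proposition. The computation preceding the proposition already shows that $E_j^{(m)}$ maps to $\sum_{i=1}^n M_{i+(j-1)n,\, i+jn}$, with an analogous formula for $F_j^{(m)}$, so what remains is to unpack each such matrix unit as the claimed nested bracket.

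To that end, I would first translate the simple root vectors of $\glnm$ into matrix units via \Cref{glv matrices}: namely $E_k^{(nm)} \leftrightarrow M_{k,k+1}$ and $F_k^{(nm)} \leftrightarrow M_{k+1,k}$. Combined with the standard identity $[M_{ab}, M_{cd}] = \delta_{bc} M_{ad} - \delta_{ad} M_{cb}$, a telescoping induction on the length of the nested bracket then yields
\[
\bigl[\bigl[\cdots [E_a^{(nm)},\, E_{a+1}^{(nm)}],\, E_{a+2}^{(nm)}\bigr],\, \cdots,\, E_{a+n-1}^{(nm)}\bigr] = M_{a,\, a+n},
\]
since at each step the only surviving Kronecker delta is the one that extends the right index while the left index remains frozen at $a$. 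Setting $a = i+(j-1)n$ gives $a+n-1 = i-1+jn$ and $a+n = i+jn$, which matches the formula in the proposition for $E_j^{(m)}$. The $F_j^{(m)}$ case is entirely analogous: one starts from $F_{i-1+jn}^{(nm)} \leftrightarrow M_{i+jn,\, i-1+jn}$ and telescopes down $n$ steps to arrive at $M_{i+jn,\, i+(j-1)n}$, now with the left index shrinking and the right index frozen.

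No serious obstacle is anticipated; this is a bookkeeping exercise once the matrix-unit dictionary is in place. The only delicate point is to track the column-major ordering of \Cref{vk basis}, under which moving from grid position $(i,j)$ to $(i,j+1)$ shifts the linear index by exactly $n$. This is precisely why the map $\glm \to \glnm$ cannot be expressed using only simple root vectors of $\glnm$ and must invoke $n$-fold nested commutators, a phenomenon the authors highlight as the source of the structural asymmetry between \Cref{gln embedding into glnm,glm embedding into glnm}.
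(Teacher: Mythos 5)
Your proposal is correct and takes essentially the same route as the paper: both realize the map as the composition $\glm \cong \id_U \otimes \mathfrak{gl}(W) \subset \mathfrak{gl}(U) \otimes \mathfrak{gl}(W) \cong \glnm$ of known Lie algebra maps and identify the generator images through the matrix units of \Cref{glm ej as matrix units}. Your telescoping evaluation of the nested brackets, reducing $\bigl[\bigl[\cdots[E_a^{(nm)}, E_{a+1}^{(nm)}], \cdots\bigr], E_{a+n-1}^{(nm)}\bigr]$ to $M_{a,\,a+n}$ via $[M_{ab},M_{cd}]=\delta_{bc}M_{ad}-\delta_{ad}M_{cb}$, correctly supplies the one step the paper's single-sentence proof leaves implicit.
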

\begin{proof}
\Cref{glm ej as matrix units} explains that this map is the composition $\glm \cong \id_U \otimes \left(W \otimes W^*\right) \subset \End(U) \otimes \End(W) \cong \glnm$ of known Lie algebra maps.
\end{proof}

The next two propositions define the embeddings $\lambda$ and $\rho$ of $\gln$ and $\glm$ into $\Clnm$ by composing the maps defined in \Cref{gln embedding into glnm,glm embedding into glnm} with the map $\Phi_{nm}\colon \gln \to \Clnm$ of \Cref{gln cl embedding}. These maps make the following diagram commute and they motivate the quantized $\lambda_q \colon \Uqgln \to Cl_q(nm)$ and $\rho_q \colon \Uqglm \to Cl_q(nm)$ defined in \Cref{uqgln clqnm embedding,uqglm clqnm embedding}.

\begin{equation}\label[diag]{gln glm embeddings factor through glnm cd}
	\begin{tikzcd}[row sep={2.25cm,between origins}, column sep=large]
		\gln \ar[r, "\Delta^{(m-1)}"] \dar \ar[drr, dashed, blue!60, "\lambda"]
		& \gln^{\otimes m} \ar[r, "\Phi_n^{\otimes m}"]
		& \Cln^{\otimes m} \ar[r, "\cong"] \ar[d, "\, \,\Gamma_U"]
		& \End(\bigwedge(\mathbb{C}^n)^{\otimes m}) \ar[d, "\cong"]\\
		\gln \otimes \glm \ar[rr, "\Phi_{nm}"]
 		&
 		& \Clnm \ar[r, "\cong"]
 		& \End\left(\bigwedge(\mathbb{C}^{nm})\right) \\
 		\glm \ar[r, swap, "\Delta^{(n-1)}"]  \uar  \ar[urr, dashed, swap, blue!60, "\rho"]
 		& \glm^{\otimes n} \ar[r, swap, "\Phi_m^{\otimes n}"]
 		& Cl\left(\mathbb{C}^m \oplus (\mathbb{C}^m)^*\right)^{\otimes n} \arrow[u, swap, "\,\,\Gamma_W"] \ar[r, swap, "\cong"]
 		& \End\left(\bigwedge(\mathbb{C}^m)^{\otimes n} \right) \ar[u, swap, "\cong"]
 	\end{tikzcd}
\end{equation}
Recall that \crossrefcliff{cl tensor m into clnm} defines $\Gamma_U$ and $\Gamma_W$. 

\begin{prop}\label[prop]{gln embedding into clnm}
	Recall the superscript notation explained in \Cref{generator superscript}. There is a Lie algebra map $\lambda\colon \gln \to \Clnm$ satisfying
\begin{align*}
	E_i^{(n)} &\to \sum_{j=1}^m \psi^\dagger_{i + (j-1)n} \psi_{i+1 + (j-1)n}^{\pdg}, 
	\\
	F_i^{(n)} &\to \sum_{j=1}^m \psi^\dagger_{i+1 + (j-1)n} \psi_{i + (j-1)n}^{\pdg}, 
	\quad \text{and} \\
	L_i^{(n)} &\to \sum_{j=1}^m \psi_{i + (j-1)n}^\dagger \psi_{i + (j-1)n}^{\pdg}
\end{align*}
for every $E_j^{(n)}, F_j^{(n)}$, and $L_j^{(n)}$ in $\gln$.
\end{prop}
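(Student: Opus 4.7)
The plan is to realize $\lambda$ as the composition $\lambda = \Phi_{nm} \circ \iota$, where $\iota\colon \gln \to \glnm$ is the Lie algebra homomorphism of \Cref{gln embedding into glnm} and $\Phi_{nm}\colon \glnm \to \Clnm$ is the map of \Cref{gln cl embedding} applied to the ambient Lie algebra of the $nm$-dimensional weight space. Since both $\iota$ and $\Phi_{nm}$ are Lie algebra homomorphisms, so is their composition, which immediately furnishes the desired $\lambda$. This is precisely the top row of the commutative diagram \Cref{gln glm embeddings factor through glnm cd}, so conceptually there is nothing new to prove; the only work is to chase a generator through the two maps and check that the output matches the formulas in the statement.

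To verify the formulas, I would simply evaluate on each Chevalley-type generator. Under $\iota$, the simple root vector $E_i^{(n)}$ is sent to $\sum_{j=1}^m E_{i+(j-1)n}^{(nm)}$ by \Cref{gln embedding into glnm}, and under $\Phi_{nm}$ each $E_k^{(nm)}$ is sent to $\psi_k^\dagger \psi_{k+1}^{\pdg}$ by \Cref{gln cl embedding}. Composing gives
\[
\lambda(E_i^{(n)}) = \sum_{j=1}^m \psi_{i+(j-1)n}^\dagger \psi_{i+1+(j-1)n}^{\pdg},
\]
as claimed. The same mechanical substitution for $F_i^{(n)} \to \sum_j F_{i+(j-1)n}^{(nm)} \to \sum_j \psi_{i+1+(j-1)n}^\dagger \psi_{i+(j-1)n}^{\pdg}$ and for $\bar{L}_i^{(n)} \to \sum_j \bar{L}_{i+(j-1)n}^{(nm)} \to \sum_j \psi_{i+(j-1)n}^\dagger \psi_{i+(j-1)n}^{\pdg}$ yields the other two formulas.

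There is no genuine obstacle here: the proposition is really the assertion that the outer triangle in \Cref{gln glm embeddings factor through glnm cd} commutes, together with an explicit evaluation on generators. The substance was packaged earlier, namely into the factorization $\mathfrak{gl}(U \otimes W) \cong \mathfrak{gl}(U) \otimes \mathfrak{gl}(W)$ of \Cref{glv factors} (used in \Cref{gln embedding into glnm}) and into the identification of $X = v \otimes f \in \gln$ with the Clifford product $\varepsilon_v \iota_f$ of \Cref{glv action by inner and exterior mult} (used in \Cref{gln cl embedding}). Accordingly, I would present the proof in a single short paragraph observing that $\lambda = \Phi_{nm} \circ \iota$ is a composition of Lie algebra maps and recording the images of the generators.
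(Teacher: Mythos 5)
Your proof is correct and matches the paper's argument exactly: the paper also defines $\lambda$ as the composition of the map $\gln \to \glnm$ from \Cref{gln embedding into glnm} with $\Phi_{nm}$ from \Cref{gln cl embedding}, and the generator-by-generator substitution you record is the intended verification of the formulas.
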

\begin{proof}
	This map is the composition of the homomorphism $\gln \to \glnm$ defined in \Cref{gln embedding into glnm} with the map $\Phi_{nm}\colon \glnm \to \Clnm$ defined in \Cref{gln cl embedding}. 
\end{proof}

\begin{prop}\label[prop]{glm embedding into clnm}
	There is a Lie algebra map $\rho\colon \glm \to \Clnm$ satisfying
	\begin{align*}
		E_j^{(m)} &\to \sum_{i=1}^n \psi^{\dagger}_{i + (j-1)n} \psi_{i + jn}^{\pdg}, 
		\\
		F_j^{(m)} &\to \sum_{i=1}^n \psi^{\dagger}_{i + jn} \psi_{i + (j-1)n}^{\pdg},
		\quad \text{and} \\
		L_j^{(m)} &\to \sum_{i=1}^n \psi_{i + (j-1)n}^\dagger \psi_{i + (j-1)n}^{\pdg},
	\end{align*}
	for every $E_j^{(m)}, F_j^{(m)}$, and $L_j^{(m)}$ in $\glm$.
\end{prop}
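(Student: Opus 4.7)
The plan is to mirror the proof of \Cref{gln embedding into clnm}: define $\rho$ as the composition of the homomorphism $\glm \to \glnm$ from \Cref{glm embedding into glnm} with $\Phi_{nm}\colon \glnm \to \Clnm$ from \Cref{gln cl embedding}. Since both arrows are Lie algebra maps, so is $\rho$; the only remaining task is to verify that the composition has the claimed form on each Chevalley generator.

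The verification for $L_j^{(m)}$ is immediate: applying the recipe of \Cref{glm ej as matrix units} to the diagonal matrix unit $M_{j,j} \in \End(W)$ gives $L_j^{(m)} \mapsto \sum_{i=1}^n L_{i+(j-1)n}^{(nm)}$, and composing with $\Phi_{nm}$ produces $\sum_{i=1}^n \psi_{i+(j-1)n}^\dagger \psi_{i+(j-1)n}^{\pdg}$, as claimed.

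The substance lies in the verification for $E_j^{(m)}$ and $F_j^{(m)}$. By \Cref{glm embedding into glnm}, the image of $E_j^{(m)}$ in $\glnm$ is a sum of \emph{nested} commutators of $n$ simple root vectors, because the matrix units in \Cref{glm ej as matrix units} connect non-adjacent positions in the column-major order. Since $\Phi_{nm}$ is a Lie algebra homomorphism, its image on $E_j^{(m)}$ is the corresponding sum of nested commutators of the Clifford products $\psi_k^\dagger \psi_{k+1}^{\pdg}$ for $k = i+(j-1)n, \ldots, i-1+jn$. The proposition therefore reduces to the telescoping identity
\begin{equation*}
	\big[\cdots \big[[\psi_{k}^\dagger \psi_{k+1}^{\pdg}, \psi_{k+1}^\dagger \psi_{k+2}^{\pdg}], \psi_{k+2}^\dagger \psi_{k+3}^{\pdg}\big], \cdots, \psi_{k+r-1}^\dagger \psi_{k+r}^{\pdg}\big] = \psi_{k}^\dagger \psi_{k+r}^{\pdg},
\end{equation*}
applied with $r = n$ and $k = i+(j-1)n$. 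The analogous identity, obtained by swapping the roles of $\psi$ and $\psi^\dagger$, handles $F_j^{(m)}$ and yields $\psi_{k+n}^\dagger \psi_{k}^{\pdg}$.

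The main obstacle is the telescoping identity itself, which I would prove by induction on $r$. The base case $r=2$ expands the commutator and applies the canonical anticommutation relations \eqref{cac review}: the identity $\psi_{k+1}^{\pdg} \psi_{k+1}^\dagger = 1 - \psi_{k+1}^\dagger \psi_{k+1}^{\pdg}$ together with the anticommutation $\psi_{k+2}^{\pdg} \psi_k^\dagger = -\psi_k^\dagger \psi_{k+2}^{\pdg}$ causes the quartic terms to cancel, leaving $\psi_k^\dagger \psi_{k+2}^{\pdg}$. The inductive step is the same calculation applied to the outer bracket $[\psi_k^\dagger \psi_\ell^{\pdg}, \psi_\ell^\dagger \psi_{\ell+1}^{\pdg}]$ for $\ell > k+1$, using throughout that generators with distinct indices anticommute.
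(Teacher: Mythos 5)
Your proposal is correct and structurally identical to the paper's proof, which is exactly the one-line observation that $\rho$ is the composition of the map $\glm \to \glnm$ of \Cref{glm embedding into glnm} with $\Phi_{nm}$ of \Cref{gln cl embedding}. The only difference is in how the explicit formulas are extracted: the paper reads them off directly from the matrix-unit description \Cref{glm ej as matrix units}, since by \Cref{glv action by inner and exterior mult} the map $\Phi_{nm}$ sends $M_{ab} = v_a \otimes v_b^*$ to $\varepsilon_{v_a}\iota_{v_b^*} = \psi_a^\dagger \psi_b^{\pdg}$, whereas you re-derive them from the nested-commutator presentation via a telescoping identity in $\Clnm$ --- a correct computation, but one the matrix-unit picture renders unnecessary.
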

\begin{proof}
	Much like $\lambda$ of \Cref{gln embedding into clnm}, this map is the composition of the homomorphism $\glm \to \glnm$ defined in \Cref{glm embedding into glnm} with $\Phi_{nm}$. 
\end{proof}

\begin{rmk}\label[rmk]{column major order means glm action does not factor}
	The embeddings defined in \Cref{gln embedding into clnm,glm embedding into clnm} \textit{both} factor through $\glnm$, but the column-major ordering defined by \Cref{vk basis} makes it so that only the $\gln$ root vectors ``align'' with those of $\glnm$, as illustrated in \Cref{root vector action on basis}. In contrast, simple root vectors in $\glm$ correspond to non-simple root vectors in $\glnm$, which are implemented by highly nested commutators. In the quantized setting, \Cref{uqgln clqnm embedding,uqglm clqnm embedding} describe quantum analogues of the $\gln$ and $\glm$ embeddings, but \textit{only} the $\Uqgln$ embedding factors through $\Uqglnm$. In the quantum case, a single choice of $\Uqglnm$-weight basis cannot simultaneously describe both the isomorphism $\bigwedge_q(V^{(nm)}) \cong \bigwedge_q(\Vn)^{\otimes m}$ of $\Uqgln$-modules and the isomorphism $\bigwedge_q(V^{(nm)}) \cong \bigwedge_q(V^{(m)})^{\otimes n}$ of $\Uqglm$-modules.
\end{rmk}

Now we take a moment to offer some explanatory remarks. To help interpret our formulas, we introduce the shorthand $v_{ij} = v_{i + (j-1)n}$ and arrange the $v_{ij}$ in a rectangular array as follows:
\begin{equation}\label[diag]{basis rect diagram}
	\begin{array}{ccccc}
	v_{11} & v_{12} & v_{13} & \cdots & v_{1m} \\
	v_{21} & v_{22} & v_{23} & \cdots & v_{2m} \\
	\vdots & \vdots & \vdots & \ddots & \vdots \\
	v_{n1} & v_{n2} & v_{n3} & \cdots & v_{nm}
	\end{array}
\end{equation}
Then if $\ell \in \{0, 1\}^{nm}$, the wedge product $\bar{v}(\ell)$ in $\bigwedge(\mathbb{C}^{nm}) \cong \bigwedge(U \otimes W)$ describes a state of \textit{occupied} and \textit{vacant} positions in the $n \times m$ array \eqref{basis rect diagram}.

\Cref{root vector action on basis} illustrates the action of simple positive root vectors in $\gln$, $\glm$, and $\glnm$ with respect to the $\glnm$-weight basis defined by \Cref{vk basis}. Under $\lambda$ and $\rho$, each simple root vector $\gln$ and $\glm$ becomes the sum of a product of a creation and an annihilation operator, so the $E_i^{(n)}$, $F_i^{(n)}$, $E_j^{(m)}$, and $F_j^{(m)}$ preserve spaces of homogeneous degree. More specifically, if we let $\alpha_{i,j}^v = e_{i,j} - e_{i+1, j}$ and $\alpha_{i,j}^h = e_{i,j} - e_{i, j+1}$, we see that \Cref{gln embedding into clnm,glm embedding into clnm} imply that
	\begin{align}
		\lambda(E_i^{(n)}) \, \bar{v}(\ell) 
			&= \sum_{j=1}^m (-1)^{p_j} \bar{v}(\ell + \alpha_{i,j}^v)
			\label{gln ei on basis}
		\quad \text{and} \\
		\rho(E_j^{(m)}) \, \bar{v}(\ell) 
			&= \sum_{i=1}^n (-1)^{p_i'} \, \bar{v}(\ell + \alpha_{i,j}^h)
			\label{glm ej on basis}
	\end{align}
	for some integers $p_j, p_i'$. Notice that $\lambda(E_i^{(n)})$ attempts to shift each occupied box in the $(i+1)$st row \textit{upwards}, as illustrated by \Cref{action of ein}. Dually, $\rho(E_j^{(m)})$ attempts to shift each occupied position in the $(j+1)$st column \textit{leftwards}, as in \Cref{action of ejm}. Attempting to fill an occupied position, or vacate an empty one, annihilates the state.

\begin{figure}[!h]
\begin{center}
	\begin{tikzcd}[column sep=large]
		v_1
		& v_{1 + n} \ar[
						dddl,
						rounded corners,
						darkgreen,
						to path={ -- ([xshift=-2ex]\tikztostart.west)
							|- ([xshift=2ex]\tikztotarget.east)}
						]
		& v_{1 + 2n} \ar[l, bend right=45, swap, "E_{2}^{(m)}", crossing over, red] \ar[
						dddl,
						rounded corners,
						darkgreen,
						to path={ -- ([xshift=-2ex]\tikztostart.west)
							|- ([xshift=2ex]\tikztotarget.east)}
						]
		& \cdots \ar[
						dddl,
						rounded corners,
						darkgreen,
						to path={ -- ([xshift=-2ex]\tikztostart.west)
							|- ([xshift=2ex]\tikztotarget.east)}
						]
		& v_{1 + (m-1)n} \ar[
						dddl,
						rounded corners,
						darkgreen,
						to path={ -- ([xshift=-2ex]\tikztostart.west)
							|- ([xshift=2ex]\tikztotarget.east)}
						] \\
		v_2 \ar[u, bend right=45, darkgreen]
		& v_{2 + n} \ar[u, bend right=45, darkgreen]
		& v_{2 + 2n} \ar[l, bend right=45, swap, "E_{2}^{(m)}", crossing over, red] \ar[u, bend right=45, darkgreen]
		& \cdots \ar[u, bend right=45, xshift=10pt, darkgreen]
		& v_{2 + (m-1)n} \ar[u, bend right=45, darkgreen]\\
		\vdots \ar[u, bend left=45, "E_2^{(n)}", blue, crossing over,] \ar[u, bend right=45, darkgreen]
		& \vdots \ar[u, bend left=45, "E_2^{(n)}", blue, crossing over] \ar[u, bend right=45, darkgreen]
		& \vdots \ar[u, bend left=45, "E_2^{(n)}", blue, crossing over] \ar[u, bend right=45, darkgreen]
		& \ddots \ar[u, bend left=45, "E_2^{(n)}", blue] \ar[u, bend right=45, xshift=10pt, darkgreen]
		& \vdots \ar[u, bend left=45, "E_2^{(n)}", blue, {name=U, left, draw=red}] \ar[u, bend right=45, darkgreen]\\
		v_{n} \ar[u, bend right=45, darkgreen]
		& v_{2n} \ar[u, bend right=45, darkgreen]
		& v_{3n} \ar[l, bend right=45, swap, "E_{2}^{(m)}", crossing over, red] \ar[u, bend right=45, darkgreen]
		& \cdots \ar[u, bend right=45, xshift=10pt, darkgreen]
		& v_{nm} \ar[u, bend right=45, darkgreen]
	\end{tikzcd}
	\caption{Some $\gln$, $\glm$, and $\glnm$ root vectors acting on the $v_i$ weight basis of $V \cong U \otimes W$. The green arrows illustrate the action of $E_{i+(j-1)n}^{(nm)}$ generators.}
	\label{root vector action on basis}
\end{center}
\end{figure}
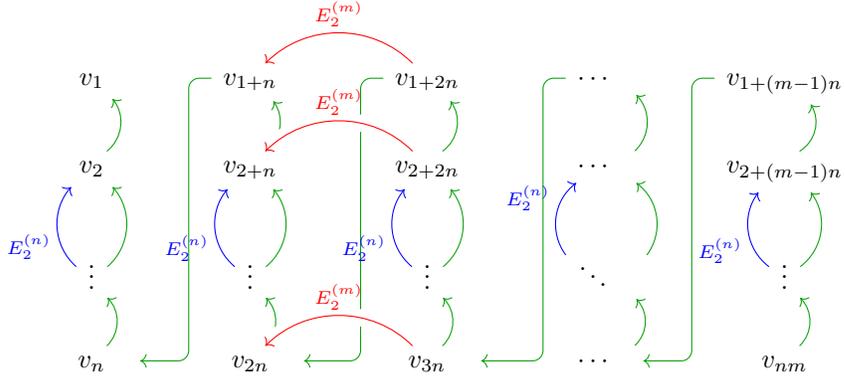
	
\begin{figure}[!h]
\begin{center}
\begin{tikzpicture}
\tikzset{every node/.style={scale=0.9}}
\node at (0, \yy) {\begin{varwidth}{5cm}{
	\begin{ytableau}
		*(green!20) v_{11} 
		& 
		& *(green!20) v_{13} 
		& *(green!20) v_{14} \\
		*(green!20) v_{21}
		& 
		& *(green!20) v_{23}
		&  \\
		*(green!20) v_{31}
		& *(green!20) v_{32}  
		& 
		& *(green!20) v_{34}  \\
	\end{ytableau}}\end{varwidth}};
\draw[->,semithick] (1.5,\yy) -- (3.25, \yy) 
	node[midway, above] {
	$\lambda\left(E_2^{(n)}\right)$
};
\node at (3.75,\yy) {\Large$0$};
\node at (4.25, \yy) {\Large$+$};
\node at (5.125, \yy) {\large$(-1)^3$};
\node at (7, \yy) 
{\begin{varwidth}{5cm}{
	\begin{ytableau}
		*(green!20) v_{11} 
		& 
		& *(green!20) v_{13} 
		& *(green!20) v_{14} \\
		*(green!20) v_{21} 
		&  *(blue!60) v_{22} 
		& *(green!20) v_{23}
		&  \\
		*(green!20) v_{31} 
		& *(blue!30) 
		& 
		&  *(green!20) v_{34} \\
	\end{ytableau}}\end{varwidth}};
\node at (8.625,\yy) {\Large$+$};
\node at (9.18, \yy) {\Large$0$};
\node at (9.75,\yy) {\Large$+$};
\node at (10.625, \yy) {\large$(-1)^7$};
\node at (12.5, \yy) 
{\begin{varwidth}{5cm}{
	\begin{ytableau}
		*(green!20) v_{11} 
		& 
		& *(green!20) v_{13} 
		& *(green!20) v_{14} \\
		*(green!20) v_{21}
		& 
		& *(green!20) v_{23}
		&  *(blue!60) v_{24} \\
		*(green!20) v_{31}
		& *(green!20) v_{32}  
		& 
		& *(blue!30) \\
	\end{ytableau}}\end{varwidth}};
\end{tikzpicture}
\caption{The $\gln$ root vector $E_2^{(n)}$ attempts to shift each occupied position in the $2$nd row \textit{upwards}. The first summand vanishes because $v_{11}$ is already occupied while the third and fourth terms vanish because $v_{23}$ and $v_{24}$ are vacant: there is nothing to shift. The sign of the $j$th term counts the number of occupied positions preceding $(2, j)$ in the column-major order defined by \Cref{vk basis}. Here $n = 3$ and $m = 4$.}
\label{action of ein}
\end{center}
\end{figure}

\begin{figure}[!h]
\begin{center}
\begin{tikzpicture}
\tikzset{every node/.style={scale=0.95}}
\node at (0, \yy) {\begin{varwidth}{5cm}{
	\begin{ytableau}
		*(green!20) v_{11} 
		& 
		& *(green!20) v_{13} 
		& *(green!20) v_{14} \\
		*(green!20) v_{21}
		& 
		& *(green!20) v_{23}
		&  \\
		*(green!20) v_{31}
		& *(green!20) v_{32}  
		& 
		& *(green!20) v_{34}  \\
	\end{ytableau}}\end{varwidth}};
\draw[->,semithick] (1.5,\yy) -- (3.5, \yy) 
	node[midway, above] {
	$\rho\left(E_2^{(m)}\right)$
};
\node at (4.5, \yy) {\large$(-1)^4$};
\node at (6.5, \yy) 
{\begin{varwidth}{5cm}{
	\begin{ytableau}
		*(green!20) v_{11} 
		& *(blue!60) v_{12} 
		& *(blue!30)
		& *(green!20) v_{14} \\
		*(green!20) v_{21}
		& 
		& *(green!20) v_{23}
		&  \\
		*(green!20) v_{31}
		& *(green!20) v_{32}  
		& 
		& *(green!20) v_{34}  \\
	\end{ytableau}}\end{varwidth}};
\node at (8.25,\yy) {\Large$+$};
\node at (9.25, \yy) {\large$(-1)^5$};
\node at (11.25, \yy) {\begin{varwidth}{5cm}{
	\begin{ytableau}
		*(green!20) v_{11} 
		& 
		& *(green!20) v_{13} 
		& *(green!20) v_{14} \\
		*(green!20) v_{21}
		& *(blue!60) v_{22}
		& *(blue!30)
		&  \\
		*(green!20) v_{31}
		& *(green!20) v_{32}  
		& 
		& *(green!20) v_{34}  \\
	\end{ytableau}}\end{varwidth}};
\node at (12.875,\yy) {\Large$+$};
\node at (13.5, \yy) {\Large$0$};
\end{tikzpicture}
\caption{The $\glm$ root vector $E_2^{(m)}$ attempts to shift each occupied position in the $3$nd \textit{column} \textit{to the left}. The third summand vanishes because the $(3,3)$ position is vacant. The sign of the $i$th term counts the number of occupied positions preceding $(i, 3)$ in the column-major order defined by \Cref{vk basis}. As in \Cref{action of ein}, $n = 3$ and $m = 4$.}
\label{action of ejm}
\end{center}	
\end{figure}

Now observe that each $L_i^{(n)}$ acts on $\bigwedge(U \otimes W)$ as a diagonal operator measuring degree of homogenous elements in the $i$th \textit{row}: for any $\bar{v}(\ell)$,
\begin{align}\label{L_i as degree operator}
	L_i^{(n)} \rhd \bar{v}(\ell) = \sum_{j=1}^m \psi_{i + (j-1)n}^\dagger \psi_{i + (j-1)n}^{\pdg} \bar{v}(\ell) = \sum_{j=1}^m \ell_{i + (j-1)n} \bar{v}(\ell).
\end{align}
Similarly, $L_j^{(m)}$ acts as the diagonal operator measuring degree of homogenous elements in the $j$th \textit{column}. \Cref{diag L action} illustrates the action $L_i^{(n)}$ and $L_j^{(m)}$ on the same given $\bar{v}(\ell)$. Since root vectors preserve components of homogeneous degree, their action on any state vector $v(\ell)$ must commute with that of every row and column degree operator $L_i^{(n)}$ and $L_j^{(m)}$. The representation $Cl\left((U \otimes W) \oplus (U \otimes W)^*\right) \cong \End\left(\bigwedge(V \otimes W)\right)$ is faithful, so commutation relations amongst module endomorphisms imply  commutation relations in the Clifford algebra.

\begin{figure}[!h]
\begin{center}
\begin{tikzpicture}
\tikzset{every node/.style={scale=0.95}}
\node at (1.1, \yy) {\begin{varwidth}{5cm}{
	\begin{ytableau}
		*(green!20) v_{11} 
		& 
		& *(green!20) v_{13} 
		& *(green!20) v_{14} \\
		*(green!20) v_{21}
		& 
		& *(green!20) v_{23}
		&  \\
		*(green!20) v_{31}
		& *(green!20) v_{32}  
		& 
		& *(green!20) v_{34}  \\
	\end{ytableau}}\end{varwidth}};
\draw[->,semithick] (3,\yy) -- (5, \yy) 
	node[midway, above] {
	$\lambda\left(\bar{L}_3^{(n)}\right)$
};
\node at (5.8, \yy) {\large$3$};
\node at (7.4, \yy) {\begin{varwidth}{5cm}{
	\begin{ytableau}
		*(green!20) v_{11} 
		& 
		& *(green!20) v_{13} 
		& *(green!20) v_{14} \\
		*(green!20) v_{21}
		& 
		& *(green!20) v_{23}
		&  \\
		*(blue!60) v_{31}
		& *(blue!60) v_{32}  
		& *(blue!30)
		& *(blue!60) v_{34}  \\
	\end{ytableau}}\end{varwidth}};

\def\yyy{-.8}
\node at (1.1, \yyy) {\begin{varwidth}{5cm}{
	\begin{ytableau}
		*(green!20) v_{11} 
		& 
		& *(green!20) v_{13} 
		& *(green!20) v_{14} \\
		*(green!20) v_{21}
		& 
		& *(green!20) v_{23}
		&  \\
		*(green!20) v_{31}
		& *(green!20) v_{32}  
		& 
		& *(green!20) v_{34}  \\
	\end{ytableau}}\end{varwidth}};
\draw[->,semithick] (3,\yyy) -- (5, \yyy) 
	node[midway, above] {
	$\rho\left(\bar{L}_4^{(m)}\right)$
};
\node at (5.8, \yyy) {\large$2$};
\node at (7.4, \yyy) {\begin{varwidth}{5cm}{
	\begin{ytableau}
		*(green!20) v_{11} 
		& 
		& *(green!20) v_{13} 
		& *(blue!60) v_{14} \\
		*(green!20) v_{21}
		& 
		& *(green!20) v_{23}
		& *(blue!30) \\
		*(green!20) v_{31}
		& *(green!20) v_{32}  
		& 
		& *(blue!60) v_{34}  \\
	\end{ytableau}}\end{varwidth}};
\end{tikzpicture}
\caption{The $\gln$ generator $\bar{L}_3^{(n)}$ measures the degree of a given state vector along the $3$rd \textit{row}. Dually, the $\glm$ generator $\bar{L}_4^{(m)}$ measures degree along the $4$th \textit{column}. As in \Cref{action of ein}, we take $n = 3$ and $m = 4$.}
\label{diag L action}
\end{center}	
\end{figure}
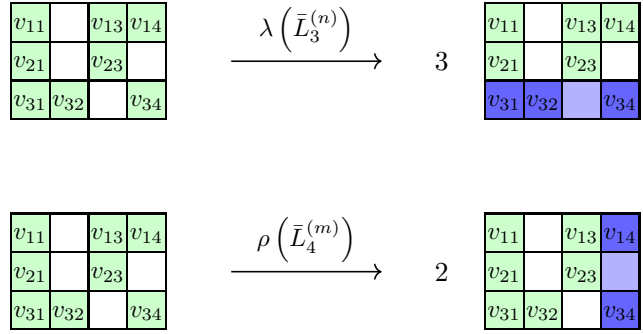

We conclude this subsection by noting that $\lambda$ and $\rho$ indeed define commuting subalgebras of $Cl\left((U \otimes W) \oplus (U \otimes W)^*\right) \cong \End\left(\bigwedge(\mathbb{C}^{nm})\right)$. This is guaranteed by the form of our embeddings, since they were obtained by tensoring each linear factor with an identity operator. In fact, even more is true: the images of $\lambda$ and $\rho$ generate each others full commutants.

\begin{prop}\label[prop]{gln and glm embeddings commute}
	The maps $\lambda$ and $\rho$ given in \Cref{gln embedding into clnm,glm embedding into clnm} define commuting subalgebras of $Cl\left((U \otimes W) \oplus (U \otimes W)^*\right)$. Moreover, $\mathfrak{gl}(U)$ and $\mathfrak{gl}(W)$ generate mutual commutants in $\End\left(\bigwedge(U \otimes W)\right)$.
\end{prop}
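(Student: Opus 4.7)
The first assertion is essentially built into the construction. Both $\lambda$ and $\rho$ factor through $\glnm$ via Propositions~\ref{gln embedding into glnm} and~\ref{glm embedding into glnm}: explicitly, $\lambda = \Phi_{nm} \circ \iota_U$ and $\rho = \Phi_{nm} \circ \iota_W$, where $\iota_U(X) = X \otimes \id_W$ and $\iota_W(Y) = \id_U \otimes Y$ under the factorization $\glnm \cong \End(U) \otimes \End(W)$. Inside any tensor product of unital associative algebras the ``left'' and ``right'' slots commute, so $\iota_U(\gln)$ and $\iota_W(\glm)$ commute as Lie subalgebras of $\glnm$. Because $\Phi_{nm}$ is a Lie algebra homomorphism, it carries this vanishing bracket to a vanishing commutator in the associative algebra $\Clnm$; that is, $\lambda(X)\rho(Y) = \rho(Y)\lambda(X)$ for all $X \in \gln$, $Y \in \glm$. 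Transporting this along the spin isomorphism $\Clnm \cong \End(\bigwedge(U \otimes W))$ immediately gives one inclusion of each mutual commutant statement, namely that $\lambda(U(\gln))$ is contained in the commutant of $\rho(U(\glm))$, and vice versa.

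The real content is the reverse inclusion. For this I would invoke the multiplicity-free decomposition established by \Cref{skew gln glm duality} in the next subsection: as a module over $\lambda \otimes \rho$, we have
\begin{equation*}
	\textstyle \bigwedge(U \otimes W) \;\cong\; \bigoplus_{\mu} L^{\gln}_{\mu} \otimes L^{\glm}_{\mu'}
\end{equation*}
with the pairs $(\mu, \mu')$ pairwise distinct and each $L^{\gln}_\mu$, $L^{\glm}_{\mu'}$ irreducible. Schur's lemma then identifies the centralizer of $\lambda(U(\gln))$ inside $\End(\bigwedge(U \otimes W))$ with $\bigoplus_\mu \id_{L^{\gln}_\mu} \otimes \End(L^{\glm}_{\mu'})$, and Jacobson density applied to the semisimple, multiplicity-free $\glm$-action on $\bigoplus_\mu L^{\glm}_{\mu'}$ shows that $\rho(U(\glm))$ surjects onto this direct sum of endomorphism algebras. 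The symmetric argument handles the other commutant. Equivalent bookkeeping is to compare dimensions: $\sum_\mu (\dim L^{\gln}_\mu)^2$ and $\sum_\mu (\dim L^{\glm}_{\mu'})^2$ both coincide with $\sum_\mu (\dim L^{\gln}_\mu)(\dim L^{\glm}_{\mu'})$ summed against itself in a way that forces equality with the sizes of the images of $U(\gln)$ and $U(\glm)$, respectively.

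The anticipated obstacle is strictly organizational rather than mathematical: the mutual-commutants conclusion is tantamount to skew Howe duality, so it cannot be proved in isolation from the decomposition of \Cref{skew gln glm duality}. In the final exposition I would either forward-reference the decomposition here or, more cleanly, state the commutation half of the proposition now (which is fully proved by the first paragraph) and defer the mutual-commutants half to appear immediately after the decomposition is in hand. Note that there is no subtlety about faithfulness in the classical setting: $\Clnm$ is simple and acts faithfully on its unique irreducible $\bigwedge(U \otimes W)$, so commutation in $\Clnm$ and in $\End(\bigwedge(U \otimes W))$ are literally the same condition, in contrast to the quantum situation flagged in the introduction.
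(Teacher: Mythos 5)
Your first paragraph is exactly the paper's argument for the commutation statement: both maps factor through $\glnm$, with images in $\gln \otimes \{I\}$ and $\{I\} \otimes \glm$ respectively, and $\Phi_{nm}$ transports the vanishing bracket into $\Clnm$. Your closing remark about faithfulness is also the point the paper makes when contrasting with the quantum case. For the mutual-commutants statement, however, you diverge from the paper, which does not prove it at all but simply cites Howe \cite[Section~4.2.5]{howe1995}.

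Your alternative route (deduce the commutant statement from the multiplicity-free decomposition via Schur's lemma and the density/double-commutant mechanism, i.e.\ \Cref{double comm}(b)) is mathematically sound and is in fact precisely the strategy the paper adopts later for the quantum case in \Cref{uqgln uqglm duality}. But be careful about the circularity you flag: as the paper is organized, the proof of \Cref{skew gln glm duality} uses the mutual-commutants half of this proposition, via \Cref{double comm}(a), as its \emph{only} source of multiplicity-freeness. So your proposed reorganization is not merely cosmetic; it forces you to establish multiplicity-freeness by some independent means. The natural one is the dimension count $\sum_\mu \dim(\pi_\mu^{GL(U)})\dim(\pi_{\mu'}^{GL(W)}) = 2^{nm}$ (the dual Cauchy identity at $\alpha_i=\beta_j=1$), showing that the explicit highest weight vectors of \Cref{hwv skew gln glm} already exhaust $\bigwedge(U\otimes W)$. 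Your ``equivalent bookkeeping'' sentence gestures at this but as written compares the wrong sums ($\sum_\mu (\dim L_\mu^{\gln})^2$ is the dimension of the image of $U(\gln)$, not of the module) and does not actually close the gap. With the dual Cauchy count made explicit, your version is a self-contained proof that trades the external citation for the same double-commutant yoga used in \Cref{type a q mf decomposition}; the paper's version keeps this proposition short at the cost of importing Howe's result.
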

\begin{proof}
The first statement holds because the image of $\gln$ under $\lambda$ factors through $\gln \otimes \{I\} \subset \glnm$ while the image of $\glm$ under $\rho$ factors through $\{I\} \otimes \glm \subset \glnm$. 

Howe proves the second statement in \cite[Section~4.2.5]{howe1995}.
\end{proof}

	\subsection{Multiplicity-free decomposition of $\bigwedge(\mathbb{C}^{nm})$}
	\label{mf_decomposition_a}

We now recall the classical duality result for Type $\mathbf{A}$. Recall that every irreducible finite dimensional representation of a semisimple Lie group $G$ contains a \textit{highest weight vector} that is essentially unique (all highest weight vectors are scalar multiples of each other), and the highest weight vector identifies the module \cite[Theorem~22.3]{Bump}. The highest weight vector in an irreducible $G$-module is fixed by the unipotent subgroup of any Borel subgroup of $G$ \cite[Theorem~26.5]{Bump}. At the Lie algebra level, this means that the highest weight vector of an irreducible module is the unique weight vector that is annihilated by all the simple positive root vectors. In particular, a vector $v$ in some $\lieg$-module is a highest weight vector if and only if $v$ is a joint eigenvector of the $H_i$ and $E_i v = 0$ for each $i = 1, \ldots, r$. There is a one-to-one correspondence between highest weight vectors and \textit{dominant weights}, as specified by \Cref{weights and dominant weights}. When $\lieg = \gln$, the dominant weights $\mu \in \mathbb{Z}^n$ satisfy 
$$\mu_1 \geq \mu_2 \geq \cdots \geq \mu_n.$$ 
We refer the reader to Proposition~3.1.20 in \cite{GW} for more details. In what follows, we let $\pi_\mu^G$ denote the irreducible $G$-module with highest weight $\mu$.

\begin{thm}[Skew $GL_n \times GL_m$-duality]\label{skew gln glm duality}
	Consider the natural action of $GL(U) \times GL(W)$ on $V = U \otimes W$ and its extension to $\bigwedge(U \otimes W)$, as described by the commutative diagram \eqref{GLV exponentiation diagram}. As a $GL(U) \times GL(W)$-module, the exterior algebra $\bigwedge(U \otimes W)$ decomposes as
	$$\bigwedge(U \otimes W) \cong \sum_{\mu} \pi_{\mu}^{GL(U)} \otimes \pi_{\mu'}^{GL(W)}.$$
	Here, the sum ranges over all partitions $\mu$ with at most $n = \dim U$ rows of length at most $m = \dim W$: all partitions fitting inside an $n \times m$ rectangle.
\end{thm}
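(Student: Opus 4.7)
The plan is to apply Howe's double-commutant yoga: since \Cref{gln and glm embeddings commute} guarantees that $\lambda(U(\gln))$ and $\rho(U(\glm))$ generate mutual commutants in $\End\!\left(\bigwedge(U\otimes W)\right)$, the classical double-commutant theorem for completely reducible actions (see, e.g., Theorem~4.2.1 in \cite{howe1995}) immediately implies that $\bigwedge(U\otimes W)$ admits a multiplicity-free decomposition as a $\gln\otimes\glm$-module of the form
\begin{equation*}
    \bigwedge(U\otimes W) \;\cong\; \bigoplus_{(\mu,\nu)\in S} \pi_\mu^{GL(U)}\otimes \pi_\nu^{GL(W)},
\end{equation*}
for a set $S$ of pairs of dominant weights indexed by the joint highest weight vectors. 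All that remains is to identify $S$ explicitly.

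To identify $S$, I would enumerate the joint highest weight vectors inside the $\glnm$-weight basis $\{\bar{v}(\ell)\}_{\ell\in\{0,1\}^{nm}}$. By \Cref{gln ei on basis,glm ej on basis} and the cartoon in \Cref{action of ein,action of ejm}, a state $\bar{v}(\ell)$ is annihilated by $\lambda(E_i^{(n)})$ for every $i$ if and only if, reading $\ell$ as an occupancy pattern on the $n\times m$ array \eqref{basis rect diagram}, every occupied box in row $i+1$ has its neighbor in row $i$ also occupied; dually, $\bar{v}(\ell)$ is annihilated by $\rho(E_j^{(m)})$ for every $j$ if and only if every occupied box in column $j+1$ has its neighbor in column $j$ also occupied. (Each root-vector image is a sum over disjoint support of operators of the form $\psi^\dagger\psi^{\pdg}$, so cancellations between summands are impossible: a joint highest weight vector must be annihilated termwise.) Taking the two conditions simultaneously forces the occupied positions of $\bar{v}(\ell)$ to form the Young diagram of a partition $\mu$ fitting inside the $n\times m$ rectangle. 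Conversely, every such partition yields a basis vector that is annihilated by all $\lambda(E_i^{(n)})$ and all $\rho(E_j^{(m)})$.

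Next, I would read off the joint weight of such a vector from \Cref{L_i as degree operator}: the operator $\lambda(\bar{L}_i^{(n)})$ counts occupied boxes in row $i$ of \eqref{basis rect diagram}, which gives $\mu_i$, while $\rho(\bar{L}_j^{(m)})$ counts occupied boxes in column $j$, which gives $\mu'_j$. Hence the joint highest weight corresponding to the partition $\mu$ is exactly $(\mu,\mu')$, and the indexing set $S$ is the set of all partitions fitting inside an $n\times m$ rectangle, paired with their transposes. Combined with the multiplicity-freeness from the double centralizer, this yields the claimed decomposition.

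The only genuinely delicate step is the joint highest weight classification: one has to be careful that the sums defining $\lambda(E_i^{(n)})$ and $\rho(E_j^{(m)})$ in \Cref{gln embedding into clnm,glm embedding into clnm} cannot annihilate a state through inter-summand cancellation. This is the main obstacle, but it is resolved by observing that the distinct summands $\psi^\dagger_{i+(j-1)n}\psi^{\pdg}_{i+1+(j-1)n}$ (indexed by $j$) act on disjoint pairs of slots in the array \eqref{basis rect diagram}, so their images on any $\bar{v}(\ell)$ are linearly independent basis vectors whenever they are nonzero; the same reasoning applies to $\rho(E_j^{(m)})$ with the role of rows and columns exchanged. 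Once this is in hand the rest is bookkeeping with \Cref{inner ext operator action review} and the enumeration of Young diagrams inside an $n\times m$ box.
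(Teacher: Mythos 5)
Your strategy is the same as the paper's: multiplicity-freeness from \Cref{gln and glm embeddings commute} together with the Double Commutant \Cref{double comm}(a), plus an explicit supply of Young-diagram highest weight vectors; your classification of the basis vectors $\bar{v}(\ell)$ killed by all raising operators recovers exactly the $v_\mu$ of \Cref{hwv skew gln glm}, and your termwise (``disjoint support'') argument for a single $\bar{v}(\ell)$ is correct. The genuine gap is the step ``hence $S$ is the set of partitions in the box.'' You have only enumerated the joint highest weight vectors that happen to be \emph{basis} vectors, but a constituent of the decomposition could a priori have a highest weight vector that is a nontrivial linear combination of several $\bar{v}(\ell)$ of the same joint weight, and joint weight spaces here are typically multidimensional: they are spanned by all $0$--$1$ fillings of the $n\times m$ array with prescribed row sums and column sums. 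For such a combination, $\lambda(E_i^{(n)})$ can vanish by cancellation \emph{across} distinct basis vectors even though no individual term vanishes. Already for $n=m=2$ and joint weight $\big((1,1),(1,1)\big)$, the two fillings $\{(1,1),(2,2)\}$ and $\{(1,2),(2,1)\}$ are both sent by $\lambda(E_1^{(2)})$ to a multiple of the same basis vector, so a suitable combination is annihilated by $\lambda(E_1^{(2)})$; one needs a further computation (with $\rho(E_1^{(2)})$) to see it is not a joint highest weight vector. Your no-cancellation remark does not address this, so your enumeration only gives a lower bound on $S$, not the exact set.

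The paper closes this with two ingredients you already have but do not assemble. First, \Cref{hw bounds skew gln glm} --- which is immediate from your own observation via \Cref{L_i as degree operator} that $\bar{L}_i^{(n)}$ is a row-degree operator --- shows that every $\mathfrak{gl}(U)$-highest weight occurring in $\bigwedge(U\otimes W)$ is a partition fitting in the $n\times m$ box. Second, \Cref{double comm}(a) gives not merely multiplicity-freeness but a \emph{bijection} between the $GL(U)$- and $GL(W)$-irreducibles occurring: each $\pi_\mu^{GL(U)}$ that appears is paired with a unique partner. Since your $v_\mu$ exhibits $\pi_\mu^{GL(U)}$ paired with $\pi_{\mu'}^{GL(W)}$ for every $\mu$ in the box, and every occurring $\mu$ lies in the box, no other constituents can occur. (A dimension count via the dual Cauchy identity would serve equally well.) With that paragraph added your argument is complete and coincides with the paper's proof.
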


\begin{rmk}
	At the character level, skew $GL_n \times GL_m$-duality is the well-known \textit{dual Cauchy identity}, which states that if $\alpha_1, \ldots, \alpha_n$ and $\beta_1, \ldots, \beta_m$ are complex numbers of absolute value $< 1$ then
	$$
		\prod_{i = 1}^n \prod_{j = 1}^m (1 + \alpha_i \beta_j)
		=
		\sum_\lambda s_\lambda(\alpha_1, \ldots, \alpha_n) s_{\lambda'}(\beta_1, \ldots, \beta_m).
	$$
	The sum ranges over all partitions $\lambda$ fitting in an $n \times m$ box and each $s_\lambda$ is a Schur polynomial. For details, see e.g. \cite[Theorem~38.2]{Bump}.
\end{rmk}

There is a short proof of \Cref{skew gln glm duality} based on Schur Duality, due to Howe \cite[Theorem~4.1.1]{howe1995}. We provide an alternative proof at the end of this subsection, whose structure serves as a blueprint for establishing quantized duality results in \Cref {type a q mf decomposition} and in \crossrefson{q_mf_decomposition_bd}. Our alternative proof, outlined below, is independent of Schur Duality. Instead, it relies on a double commutant theorem of Weyl, which we reproduce below for the reader's convenience. \longer{Surprisingly, it is difficult to find a proof of \Cref{double comm} in the literature, so we include our own in \Cref{double comm appendix}.}

\begin{thm}[Double commutant]\cite[Theorem~B.1.1]{gurevich_howe}\label{double comm}
\begin{enumerate}[(a)]
	\item Let $\mathcal{H}$ be a finite dimensional vector space. Let $\mathcal{A}, \mathcal{A'}$ be two subalgebras of $\End(\mathcal{H})$ such that
	\begin{enumerate}[(1)]
		\item The algebra $\mathcal{A}$ acts semi-simply on $\mathcal{H}$.
		\item Each of $\mathcal{A}$ and $\mathcal{A'}$ is the full commutant of the other in $\End(\mathcal{H})$.
	\end{enumerate}
	Then $\mathcal{A'}$ acts semi-simply on $\mathcal{H}$, and as a representation of $\mathcal{A} \otimes \mathcal{A'}$, we have
	\begin{align}\label{mf decomp}
		\mathcal{H} \cong \bigoplus_{i \in I} \mathcal{H}_i \otimes \mathcal{H}_i',
	\end{align}
	where $\mathcal{H}_i$ are all the irreducible representations of $\mathcal{A}$, and $\mathcal{H'}_i$ are all the irreducible representations of $\mathcal{A'}$. In particular, we have a bijection between irreducible representations of $\mathcal{A}$ and $\mathcal{A'}$, and moreover, every isotypic component for $\mathcal{A}$ is an irreducible representation of $\mathcal{A} \otimes \mathcal{A'}$.

	\item On the other hand, if $\mathcal{A}$ and $\mathcal{A'}$ commute and the decomposition \eqref{mf decomp} holds as a representation of $\mathcal{A} \otimes \mathcal{A'}$, then each of $\mathcal{A}$ and $\mathcal{A'}$ is the full commutant of the other in $\End(\mathcal{H})$.
\end{enumerate}
\end{thm}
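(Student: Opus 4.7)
The plan is to pass through the isotypic decomposition of $\mathcal{H}$ under $\mathcal{A}$ and identify the commutant via Burnside's theorem. Using assumption (1), I would write $\mathcal{H} \cong \bigoplus_{i \in I} \mathcal{H}_i \otimes M_i$ as an $\mathcal{A}$-module, where the $\mathcal{H}_i$ enumerate the pairwise non-isomorphic simple $\mathcal{A}$-submodules appearing in $\mathcal{H}$ and $M_i$ is the multiplicity space on which $\mathcal{A}$ acts trivially. Since each $\mathcal{H}_i$ is a simple module over the image $\mathcal{A}|_{\mathcal{H}_i}$ inside $\End(\mathcal{H}_i)$, Burnside's theorem guarantees that this image is all of $\End(\mathcal{H}_i)$.

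Next I would compute the commutant of $\mathcal{A}$ inside $\End(\mathcal{H})$. Any operator commuting with $\mathcal{A}$ must preserve each isotypic summand $\mathcal{H}_i \otimes M_i$, because $\mathrm{Hom}_{\mathcal{A}}(\mathcal{H}_i, \mathcal{H}_j) = 0$ for $i \neq j$ by Schur's lemma. On a single summand, commuting with the full matrix algebra $\End(\mathcal{H}_i) \otimes \id_{M_i}$ forces the operator to take the form $\id_{\mathcal{H}_i} \otimes T_i$ for some $T_i \in \End(M_i)$. Hence the commutant of $\mathcal{A}$ in $\End(\mathcal{H})$ is canonically $\bigoplus_{i \in I} \End(M_i)$, and by assumption (2) this is exactly $\mathcal{A}'$. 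Setting $\mathcal{H}'_i = M_i$ makes each $\mathcal{H}'_i$ a simple $\mathcal{A}'$-module, pairwise non-isomorphic because they have distinct annihilators in $\bigoplus_i \End(M_i)$. This produces \eqref{mf decomp} and simultaneously gives semisimplicity of $\mathcal{A}'$ together with the claimed bijection of simples.

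For part (b), I would run the argument in reverse. Assuming \eqref{mf decomp} holds and that $\mathcal{A}$ and $\mathcal{A}'$ commute, Burnside applied to each tensor factor shows that the images of $\mathcal{A}$ and $\mathcal{A}'$ inside $\End(\mathcal{H}) \cong \bigoplus_i \End(\mathcal{H}_i) \otimes \End(\mathcal{H}'_i)$ are $\bigoplus_i \End(\mathcal{H}_i) \otimes \id$ and $\bigoplus_i \id \otimes \End(\mathcal{H}'_i)$, respectively, and these are textbook mutual commutants inside a tensor product of matrix algebras. The main technical obstacle is marshalling Burnside/Jacobson density correctly: one must verify that the image of $\mathcal{A}$ acts irreducibly on each $\mathcal{H}_i$ with endomorphism ring equal to the base field, which is automatic over $\mathbb{C}$ once the $\mathcal{H}_i$ are chosen as the simple $\mathcal{A}$-constituents. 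Beyond this, the proof is bookkeeping with the canonical isotypic decomposition and Schur's lemma.
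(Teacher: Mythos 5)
Your argument is correct, and it is the standard proof of the double commutant theorem; the paper itself does not prove this statement in the main text but imports it from Gurevich--Howe (Theorem B.1.1), deferring its own write-up to an appendix, so there is no in-text proof to compare against. Part (a) as you present it is complete: the isotypic decomposition $\mathcal{H} \cong \bigoplus_i \mathcal{H}_i \otimes M_i$, Burnside on each simple constituent, and the Schur-lemma computation of the commutant as $\bigoplus_i \mathrm{id}_{\mathcal{H}_i} \otimes \End(M_i)$ together identify $\mathcal{A}'$ with $\bigoplus_i \End(M_i)$ and yield semisimplicity, the decomposition \eqref{mf decomp}, and the bijection of simples.

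One step in part (b) deserves to be made explicit rather than attributed to Burnside alone. Burnside applied factor by factor only shows that $\mathcal{A}$ surjects onto each $\End(\mathcal{H}_i)$; it does not by itself rule out that the image of $\mathcal{A}$ in $\bigoplus_i \End(\mathcal{H}_i)$ is a proper subalgebra (e.g.\ a diagonal copy when two blocks could be identified). What closes this gap is that the $\mathcal{H}_i$ are pairwise non-isomorphic simple $\mathcal{A}$-modules --- guaranteed here because the hypothesis of (b) says they enumerate \emph{all} the irreducibles of $\mathcal{A}$ --- so the Jacobson density theorem (equivalently, the Artin--Wedderburn surjection onto the product of matrix blocks) gives that the image is the full direct sum $\bigoplus_i \End(\mathcal{H}_i) \otimes \mathrm{id}$. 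The same pairwise non-isomorphism is what forces any operator commuting with $\mathcal{A}$ to preserve the summands $\mathcal{H}_i \otimes \mathcal{H}'_i$, i.e.\ to have no off-diagonal components in $\End(\mathcal{H})$. You flag the density issue in your closing paragraph, so this is a matter of filling in a named step rather than repairing a flaw; with that spelled out the proof is complete.
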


To establish \Cref{skew gln glm duality}, first we use \Cref{gln and glm embeddings commute} to prove that $GL(U)$ and $GL(W)$ generate mutual commutants in $\End\left(\bigwedge(U \otimes W)\right)$, thereby guaranteeing that the decomposition of the $GL(U) \otimes GL(W)$-module $\bigwedge(U \otimes W)$ is multiplicity-free by the so-called \textit{double commutant yoga} of \Cref{double comm}(a) \cite{howe1995}. Next, we use \Cref{hw bounds skew gln glm} to characterize the set of joint highest weights that can possibly appear in $\bigwedge(U \otimes W)$. Finally, we explicitly construct joint highest weight vectors in \Cref{hwv skew gln glm}, thereby exhibiting a \textit{Howe correspondence}, that is, a bijection $\mu \to f(\mu)$ such that the irreducible $GL(U) \times GL(W)$-module $\pi_U^\mu \otimes \pi_W^{f(\mu)}$ appears in the decomposition $\bigwedge(U \otimes W)$.

We are interested in developing a quantum analogue of \Cref{skew gln glm duality}, so we work at the Lie algebra level throughout. Showing that $GL(U)$ and $GL(W)$ generate full mutual commutants is equivalent to showing that $\mathfrak{gl}(U)$ and $\mathfrak{gl}(W)$ generate full mutual commutants: Proposition $3.14$ in \cite{LZ06} proves that if $(\pi, Z)$ is any representation of the connected complex Lie group $G$ and $(\mathrm{d}\pi, Z)$ is the corresponding representation of the Lie algebra $\lieg = \mathrm{Lie}(G)$, then
\begin{equation*}
	\End_{G}(Z) = \End_{\lieg}(Z).
\end{equation*}

\Cref{gln and glm embeddings commute} shows that $\mathfrak{gl}(U)$ and $\mathfrak{gl}(W)$ generate mutual commutants in $Cl\left((U \otimes W) \oplus (U \otimes W)^*\right)$.
Combining it with the Double Commutant \Cref{double comm} guarantees that the decomposition of the $GL(U) \times GL(W)$-module $\bigwedge(U \otimes W)$ is multiplicity-free.

The next proposition characterizes the set of highest weights that can possibly appear in the decomposition of $\bigwedge(U \otimes W)$.
\begin{lem}\label[lem]{hw bounds skew gln glm}
	If the irreducible $GL(U) \times GL(W)$-module $\pi_\mu^{GL(U)} \otimes \pi_\mu^{GL(W)}$ appears in the decomposition of $\bigwedge(U \otimes W)$ into irreducibles, then $\mu_1 \leq m$ and $\nu_1 \leq n$, with $n = \dim V$ and $m = \dim W$.
\end{lem}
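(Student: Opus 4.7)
The plan is to extract both bounds directly from the explicit description of the Cartan generators as diagonal ``degree'' operators given in \Cref{L_i as degree operator}. Under $\lambda$, the generator $\bar{L}_i^{(n)} \in \gln$ acts on the basis vector $\bar{v}(\ell)$ as
$$\lambda(\bar{L}_i^{(n)})\,\bar{v}(\ell) \;=\; \Bigl(\sum_{j=1}^m \ell_{i + (j-1)n}\Bigr)\,\bar{v}(\ell),$$
which counts the number of occupied boxes in the $i$th row of the rectangular diagram \eqref{basis rect diagram}. Since each $\ell_{i+(j-1)n} \in \{0,1\}$ and there are exactly $m$ boxes in row $i$, every eigenvalue of $\lambda(\bar{L}_i^{(n)})$ lies in $\{0,1,\ldots,m\}$.

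Now suppose the irreducible $\pi_\mu^{GL(U)} \otimes \pi_\nu^{GL(W)}$ appears as a summand. A joint highest weight vector $v$ in this component is in particular a $\lambda$-weight vector whose $\bar{L}_i^{(n)}$-eigenvalue equals $\mu_i$. Since $v$ decomposes as a linear combination of the $\bar{v}(\ell)$ and these are eigenvectors for distinct weights, every basis vector appearing in the expansion of $v$ must satisfy $\sum_{j=1}^m \ell_{i+(j-1)n} = \mu_i$. The bound on the eigenvalues above forces $\mu_i \leq m$ for every $i$; in particular $\mu_1 \leq m$.

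Symmetrically, under $\rho$ the generator $\bar{L}_j^{(m)} \in \glm$ acts on $\bar{v}(\ell)$ as $\sum_{i=1}^n \ell_{i+(j-1)n}$, the column-degree operator counting occupied boxes in the $j$th column of \eqref{basis rect diagram}, with eigenvalues bounded by $n$. The same argument applied to the $\glm$-weight of the joint highest weight vector yields $\nu_j \leq n$ for all $j$, and in particular $\nu_1 \leq n$. There is no real obstacle: \Cref{L_i as degree operator} has already translated a representation-theoretic upper bound into a counting statement about occupancies of an $n \times m$ rectangle, and the lemma follows immediately.
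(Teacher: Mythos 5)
Your argument is correct and follows essentially the same route as the paper: both proofs use \Cref{L_i as degree operator} to read off the $\bar{L}_i^{(n)}$- and $\bar{L}_j^{(m)}$-eigenvalues of the simultaneous eigenbasis $\bar{v}(\ell)$ as row and column occupancy counts, bounded by $m$ and $n$ respectively, and conclude that every weight occurring in $\bigwedge(U \otimes W)$ obeys the stated bounds. The only cosmetic difference is that you phrase the conclusion via the highest weight vector of a given irreducible summand, while the paper bounds all weights of the module at once; the content is identical.
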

\begin{proof}
Every weight of $GL(U) \times GL(W)$ is a weight of the induced $\mathfrak{gl}(U) \otimes \mathfrak{gl}(W)$-action. We show that the upper bounds in fact hold for any weights of the $\mathfrak{gl}(U) \otimes \mathfrak{gl}(W)$-module $\bigwedge(U \otimes W)$.

For every $\ell \in \{0,1\}^{nm}$, the vector $\bar{v}(\ell)$ is a simultaneous eigenvector of each $\bar{L}_i^{(n)}$ and each $\bar{L}_j^{(m)}$. \Cref{L_i as degree operator} shows that the $\bar{L}_i^{(n)}$-eigenvalue of $\bar{v}(\ell)$ is the degree of $\bar{v}(\ell)$ in the $i$th row, which is at most $m$ by construction. Similarly, the $\bar{L}_j^{(m)}$-eigenvalue of $\bar{v}(\ell)$ is its degree in the $j$th column, which is at most $n$. The $\bar{L}_i^{(n)}$ and $\bar{L}_j^{(m)}$ generate a Cartan subalgebra of $\mathfrak{gl}(U) \otimes \mathfrak{gl}(W)$, and there are $|\{0,1\}^{nm}| = 2^{nm} = \dim\left(\bigwedge(U \otimes W)\right)$ eigenvectors $\bar{v}(\ell)$, so it follows that every $\mathfrak{gl}(U)$-weight $\mu$ in $\bigwedge(U \otimes W)$ satisfies $\mu_1 \leq m$. Dually, every $\mathfrak{gl}(W)$-weight $\nu$ satisfies $\nu_1 \leq n$.
\end{proof}

The next proposition provides a supply of joint $GL_n \times GL_m$-highest weight vectors in $\bigwedge(U \otimes W)$, one for each partition $\mu$ fitting in an $n \times m$ rectangle. The highest weight vectors have a neat diagrammatical representation: the highest weight vector corresponding to $\mu$ is the wedge product of the $v_{ij}$, arranged as in \Cref{basis rect diagram}, filling the boxes occupied by the Young diagram corresponding to $\mu$. For example, the highest weight vector corresponding to $\mu = (3, 2, 1)$ is
\begin{equation*}
	(v_{11} \wedge v_{12} \wedge v_{13}) \wedge (v_{21} \wedge v_{22}) \wedge (v_{31}) = (v_{11} \wedge v_{21} \wedge v_{31}) \wedge (v_{12} \wedge v_{22}) \wedge (v_{13}),
\end{equation*}
the wedge product of the basis vectors illustrated in \Cref{hwv young diagram}.

\begin{figure}[!h]
\begin{center}
\begin{tikzpicture}
\node {\large$v_\mu =$};
\node at (1.75, 0) {\begin{varwidth}{5cm}{
	\begin{ytableau}
		v_{11} & v_{12} & v_{13} \\
		v_{21} & v_{22} \\
		v_{31}
	\end{ytableau}
}\end{varwidth}};
\end{tikzpicture}
\end{center}
\caption{The joint $GL_n \times GL_m$ highest weight vector corresponding to the partition $\mu = (3, 2, 1)$ is the wedge product of the basis vectors filling the boxes of the Young diagram defined by $\mu$.}
\label{hwv young diagram}
\end{figure}
Up to sign, this procedure determines a unique vector in $\bigwedge(U \otimes W)$ corresponding to a partition $\mu$ fitting in an $n \times m$ box. We note that \Cref{uqgln uqglm hwv} constructs joint highest weight vectors of the general linear quantum groups $\Uqgln$ and $\Uqglm$ using essentially the same procedure.

\begin{lem}\label[lem]{hwv skew gln glm}
	For $1 \leq \ell \leq n$, let $r_i(\ell) = v_{i1} \wedge \cdots \wedge v_{i\ell}$. Given a partition $\mu = (\mu_1, \ldots, \mu_n)$, define $v_{\mu} = r_1(\mu_1) \wedge \cdots \wedge r_n(\mu_n)$. The vector $v_{\mu}$ is a joint $GL_n \times GL_m$-highest weight vector in $\bigwedge(U \otimes W)$. Its weight as an eigenvector of the direct product of diagonal maximal tori $(T\times T') \subset GL_n \times GL_m$ is given by $\psi_\mu(t, t') = t^{\mu}( t' )^{\mu'}$. Here $t \in T$ is given by
	$$t = \begin{bmatrix}
		t_1 & & \\
		& \ddots & \\
		& & t_n
	\end{bmatrix},$$
	$t^\mu = \prod_i t_i^{\mu_i}$, and $(t')^{\mu'}$ is defined similarly.
\end{lem}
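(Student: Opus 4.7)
My plan is to verify directly that $v_\mu$ is a joint highest-weight vector by checking annihilation by every simple positive root vector of $\gln$ (via $\lambda$) and $\glm$ (via $\rho$), and then to read off the weight from the formulas for $\lambda(\bar L_i^{(n)})$ and $\rho(\bar L_j^{(m)})$.

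The crucial observation is that, up to sign, $v_\mu$ coincides with $\bar v(\ell)$ for the multi-index $\ell \in \{0,1\}^{nm}$ given by $\ell_{i+(j-1)n} = 1$ iff $j \le \mu_i$; that is, the occupied cells in the rectangular array \Cref{basis rect diagram} are exactly the boxes of the Young diagram of $\mu$. Applying \Cref{gln ei on basis}, the operator $\lambda(E_i^{(n)})$ attempts to slide each occupied box at $(i+1, j)$ up into the vacant slot $(i, j)$. If $(i+1, j)$ is occupied then $j \le \mu_{i+1}$, and by the partition inequality $\mu_{i+1} \le \mu_i$ the target $(i, j)$ is occupied as well, so the corresponding summand vanishes. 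Hence $\lambda(E_i^{(n)}) v_\mu = 0$. Dually, \Cref{glm ej on basis} shows that $\rho(E_j^{(m)})$ tries to slide each occupied box at $(i, j+1)$ leftwards into $(i, j)$; an occupied $(i, j+1)$ satisfies $j+1 \le \mu_i$, hence $(i, j)$ is also occupied. Every summand is killed, so $\rho(E_j^{(m)}) v_\mu = 0$.

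For the weight computation, I would invoke \Cref{L_i as degree operator}, which tells us that $\lambda(\bar L_i^{(n)})$ acts diagonally on any $\bar v(\ell)$ by the number of occupied positions in row $i$, and symmetrically that $\rho(\bar L_j^{(m)})$ acts by the number of occupied positions in column $j$. For $v_\mu$ these counts are $\mu_i$ and $\mu'_j$ respectively, so $v_\mu$ is a simultaneous eigenvector of the $\bar L_i^{(n)}$ and $\bar L_j^{(m)}$ with eigenvalues $\mu_i$ and $\mu'_j$. Passing to the corresponding diagonal tori $T \subset GL_n$ and $T' \subset GL_m$ yields the character $\psi_\mu(t, t') = \prod_i t_i^{\mu_i} \prod_j (t'_j)^{\mu'_j} = t^\mu (t')^{\mu'}$.

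I do not anticipate any significant obstacle: the whole argument is a direct reading of the pictorial action formulas already developed in the excerpt, combined with the definitional monotonicity $\mu_i \ge \mu_{i+1}$ of a partition. The one point deserving a line of verification is that $v_\mu$ is itself nonzero, which is immediate because the basis vectors $v_{ij}$ appearing in its wedge decomposition are pairwise distinct by the construction of $r_i(\mu_i)$. The only mild book-keeping concern is the sign ambiguity in rearranging the wedge product from the row-by-row ordering $r_1(\mu_1) \wedge \cdots \wedge r_n(\mu_n)$ into the column-major ordering inherent to $\bar v(\ell)$, but this overall sign is inconsequential for both the annihilation and the weight claims.
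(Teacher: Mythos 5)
Your proposal is correct and follows essentially the same argument as the paper: both identify $v_\mu$ (up to sign) with the basis vector $\bar v(\ell)$ whose occupied cells form the Young diagram of $\mu$, observe via \Cref{gln ei on basis,glm ej on basis} that the simple positive root vectors shift occupied boxes up or to the left into positions already occupied because $\mu$ is a left-justified partition, and read off the weight from the row/column degree operators of \Cref{L_i as degree operator}. Your write-up is in fact slightly more explicit than the paper's on the weight computation and the harmless sign ambiguity.
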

\begin{proof}
	As usual, we work at the Lie algebra level. Given \Cref{hw bounds skew gln glm}, it suffices to show that each $v_\mu$ is annihilated by each $E_i^{(n)}$ and each $E_j^{(m)}$. But this is immediate from the shape of $v_\mu$.
	
	Recall \Cref{gln ei as matrix units,glm ej as matrix units} and \Cref{root vector action on basis,action of ein,action of ejm}, which explain that the operators induced by the simple positive root vectors $E_i^{(n)}$ and $E_j^{(m)}$ in $\gln$ and $\glm$ shift certain occupied positions in $\bar{v}(\ell)$ above and to the left. Since $\mu$ is a left-justified partition, every position above and to the left of an occupied position in $v_\mu$ is occupied already, so 
	$$\lambda(E_i^{(n)}) \, v_\mu = \rho(E_j^{(m)}) \, v_\mu = 0.$$
	Hence, each $v_\mu$ is a joint highest weight vector for the $\mathfrak{gl}(U) \otimes \mathfrak{gl}(W)$-action.
\end{proof}

\begin{proof}[Proof of \Cref{skew gln glm duality}.]
	Together, \Cref{gln and glm embeddings commute} and \Cref{hwv skew gln glm,hw bounds skew gln glm} imply \Cref{skew gln glm duality}. \Cref{gln and glm embeddings commute} proves that $GL(U)$ and $GL(W)$ generate mutual commutants in $\End\left(\bigwedge(U \otimes W)\right)$, so the decomposition of $\bigwedge(U \otimes W)$ into $GL(U) \times GL(W)$ irreducibles is multiplicity-free by the Double Commutant \Cref{double comm}(a). \Cref{hw bounds skew gln glm} characterizes the set of weights that can possibly appear in the decomposition of $\bigwedge(U \otimes W)$, and \Cref{hwv skew gln glm} explicitly constructs a joint $GL(U) \times GL(W)$ highest weight vector for each dominant weight allowed by \Cref{hw bounds skew gln glm}.
\end{proof}


\section{Quantum skew duality via quantum Clifford algebras}
\label{q skew duality type a}
	
In this section we prove our skew $\Uqgln \otimes \Uqglm$-duality \Cref{uqgln uqglm duality}. \Cref{uqgln def} introduces general linear quantum groups by adjoining certain pairwise commuting generators to $\Uqsln$. Our proof of \Cref{uqgln uqglm duality} largely mirrors that of its classical counterpart, \Cref{skew gln glm duality}. Recall that to prove the classical skew duality \Cref{skew gln glm duality}, we first constructed the exterior algebra $\bigwedge(U \otimes W)$ as a module of the Clifford algebra $Cl\left((U\otimes W) \oplus (U\otimes W) ^*\right)$ in \Cref{ext_alg_as_cln_module}, and then we found commuting embeddings of $\mathfrak{gl}(U)$ and $\mathfrak{gl}(W)$ into $Cl\left((U\otimes W) \oplus (U\otimes W) ^*\right)$ in \Cref{commuting_a_embeddings}. Finally, we proved that $\mathfrak{gl}(U)$ and $\mathfrak{gl}(W)$ generate mutual commutants in $Cl\left((U\otimes W) \oplus (U\otimes W) ^*\right)$ and used the Double Commutant \Cref{double comm}(a) and a highest weight vector computation to conclude.

In the quantum case, we prove \Cref{uqgln uqglm duality} by first introducing the braided exterior algebra $\bigwedge_q(\Vn)$, a quantum analogue of $\bigwedge(U \otimes W)$ as a $Cl_q(nm)$-module in \Cref{braided_ext_alg}. The quantum Clifford algebra $Cl_q(nm)$ is the quantum analogue of $\Clnm$ studied in \crossrefcliffch. As mentioned in the introduction, $Cl_q(nm)$ is shorthand for $Cl_q(nm, 1)$: in this \chorpaper\ we use the quantized Clifford algebra introduced in \cite{kwon_2014}. Then we embed $\Uqgln$ and $\Uqglm$ into $Cl_q(nm)$ in \Cref{commuting quantum actions section}. Our homomorphisms define commuting subalgebras of module endomorphisms, so we compute highest weight vectors in \Cref{type a q mf decomposition} and appeal to the Double Commutant \Cref{double comm}(b) to conclude.

	\subsection{The braided exterior algebra as a quantum Clifford algebra module}
	\label{braided_ext_alg}

In this subsection we define the \textit{braided exterior algebra} $\bigwedge_q(V^{(n)})$ as a module for $Cl_q(n)$, a quantum analogue of the Clifford algebra $Cl\left(\mathbb{C}^n \oplus (\mathbb{C}^n)^*\right)$ studied in \crossrefcliffch. See also \cite{kwon_2014} for a definition. Braided exterior algebras were first introduced in \cite{berenstein} as $\Uqg$-module analogues of the classical $\lieg$-module $\bigwedge(V)$. Much like in the classical case, we then define an action of $\Uqgln$ on $\bigwedge_q(\Vn)$ that factors through $Cl_q(n)$ and makes the following diagram commute:
\begin{equation}\label[diag]{uqgln clq embedding compatibility diag}
	\begin{tikzcd}
		Cl_q(n)  \ar[r] & \End(\bigwedge_q(\Vn)) \\
		\Uqgln \ar[u, "\Phi_{q, n}"] \ar[ur] &
	\end{tikzcd}
\end{equation}
\Cref{uqgln def} introduces the general linear quantum group $\Uqgln$. \Cref{uqgln clq embedding compatibility diag} is the quantum analogue of \Cref{gln factors through cl}. In this case, the top arrow $Cl_q(n) \to \End(\bigwedge_q(\Vn))$ is \textit{not} an isomorphism: in the quantum case, the $Cl_q(n)$-module $\bigwedge_q(\Vn)$ is not faithful \crossrefcliff{clqnk is semisimple}.

For now, let $\lieg$ be a simple complex Lie algebra and let $V$ be any $\Uqg$-module. Later we will specialize to the case where $V$ is the natural $\Uqgln$-module. The quantum group $\Uqg$ acts on the tensor algebra $\mathcal{T}(V) \coloneqq \bigoplus_{j=0}^\infty V^{\otimes j}$ through its comultiplication. Unlike the classical enveloping algebra $\Ug$, the Hopf algebra $\Uqg$ is \textit{not} cocommutative, so the $\Uqg$-action on $\mathcal{T}(V)$ does \textit{not} preserve the ideal  generated by the set of symmetric tensors $\mathrm{Sym}^2(V)$ defined by \Cref{classical sym2 def}. Recall \Cref{exterior algebra def}, which explains that $\bigwedge(V)  \coloneqq T(V) / \big\langle \mathrm{Sym}^2(V) \big\rangle$.

Regardless, the category of $\Uqg$-modules is \textit{braided}\ddichotomy{, as explained in \Cref{uqg_modcat}}{\cite[Corollary~10.1.20]{chari_pressley_1994}}. 
For any pair of $\Uqg$-modules $V, W$, there is a natural isomorphism $\check{R}^{V, W}\colon V \otimes W \to W \otimes V$ satisfying
$$\check{R}^{V,W} (v \otimes w) = \tau (\mathcal{R} \rhd v \otimes w),$$
with $\mathcal{R}$ denoting the universal $R$-matrix of $\Uqg$. We view $\mathcal{R}$ as an invertible element of a suitable completion of $\Uqg \otimes \Uqg$ with a well-defined action on any tensor product of finite-dimensional $\Uqg$-modules. For details, see \ddichotomy{\Cref{uqg_modcat}}{\cite[Section~10.1.D]{chari_pressley_1994}}.
In the related topological Hopf algebra $\Uh$, the $R$-matrix has a unit constant term, so we may view the braiding operator as a deformation of the flip map $\tau$ \ddichotomy{in the sense of \Cref{braiding as deformation of inf braiding}}{\cite[Section~XX.4]{Kassel}}. The deformed map $\check{R} \coloneqq \check{R}^{V, V}$ is no longer involutive, but it satisfies the \textit{braid relations}
\begin{align}\label{check r satisfies braid rels}
\check{R}_i \check{R}_{i+1} \check{R}_i = \check{R}_{i+1} \check{R}_{i} \check{R}_{i+1}.
\end{align}
When $T$ is an operator on a tensor product $V \otimes V$, we write $T_i$ to denote the element of $\End(V^{\otimes n})$ acting by $T$ on the $i$th and $(i+1)$st factors:
$$T_i = \id_V^{\otimes (i-1)} \otimes T \otimes \id_V^{\otimes (n-1-i)}.$$
\Cref{check r satisfies braid rels} is ultimately a consequence of the \textit{quasitriangularity} axioms defining $\Uqg$ \cite[Definition~5.1]{majid_2003}, which guarantee $\mathcal{R}$ satisfies the celebrated \textit{Yang-Baxter equation}
$\mathcal{R}_{12} \mathcal{R}_{13} \mathcal{R}_{23} = \mathcal{R}_{23} \mathcal{R}_{13} \mathcal{R}_{12}$ \cite[Lemma~5.2]{majid_2003}. 

We use the braiding operator $\check{R}$ to define a quantum analogue $\mathrm{Sym}^2_q(V)$ of $\mathrm{Sym}^2(V)$ that is preserved by the $\Uqg$-action. Since the braiding operator $\check{R}$ belongs to the centralizer $\End_{\Uqg}(V \otimes V)$, the space of $\mu$-eigenvectors of $\check{R}$ is a $\Uqg$-module for each eigenvalue $\mu$. It is a well-known fact, see e.g. Corollary~2.22 in \cite{leduc_ram_1997} or Proposition~XVII.3.2 in \cite{Kassel}, that if $V_\mu$ is the irreducible $\Uqg$-module with highest weight $\mu$ and the decomposition $V_\mu \otimes V_\mu$ is multiplicity-free, then the braiding operator $\check{R}^{V_\mu, V_\mu}$ acts on $V_\nu \subseteq V_\mu \otimes V_\mu$ by the scalar 
$$\pm q^{{1\over 2}\chi_\nu(C) - \chi_\mu(C)}.$$ 
\Cref{casimir eigenvalues} defines the eigenvalue $\chi_\gamma(C) = \langle \gamma, \gamma + 2\rho \rangle$ of the Casimir operator of $\Ug$ acting on the irreducible $V_\gamma$. 

We obtain the \textit{braided exterior algebra} $\bigwedge_q(V)$ as the quotient of the tensor algebra $\mathcal{T}(V)$ modulo the ideal generated by the \textit{positive} eigenvectors of $\check{R}$ \cite[Remark~2.3]{berenstein}. That is, we set
\begin{align}\label{braided ext alg as tensor alg quotient}
	\textstyle \bigwedge_q(V) \coloneqq \mathcal{T}(V) / \big\langle\mathrm{Sym}^2_q(V) \big\rangle,
\end{align}
with $\mathrm{Sym}^2_q(V)$ denoting the set of eigenvectors of $\check{R}$ with eigenvalue of the form $+q^r$ for some $r \in \mathbb{Q}$. \Cref{braided ext alg as tensor alg quotient} is the quantum analogue of \Cref{exterior algebra def}, which constructs the classical exterior algebra using the $+1$ eigenvectors of $\tau$, the classical enveloping algebra's (symmetric) braiding operator. In fact, a continuity argument shows that the positive eigenvectors of $\check{R}$ become elements of $\mathrm{Sym}^2(V) = \ker\left(\tau + \id_{V \otimes V}\right)$ in the limit as $q \to 1$ \cite[Corollary~2.22(3)]{leduc_ram_1997}.

We now specialize $\lieg = \mathfrak{gl}_n$ and focus on the braided exterior algebra $\bigwedge_q(\Vn)$, with $\Vn$ denoting the natural $\Uqgln$-module. We begin by defining $\Uqgln$ and computing its action on $\bigwedge_q(\Vn)$.

\begin{defn}\label[defn]{uqgln def}
	The \textit{general linear quantum group} $\Uqgln$ is the unital associative algebra generated by $\Uqsln$ along with the pairwise commuting group-like generators $L_i^{\pm 1}$, for $i = 1, \ldots, n$ satisfying the following relations:
	\begin{align*}
	\begin{gathered}
		K_i = L_i \inv{L_{i+1}}, \quad
		L_i E_j \inv{L_i} = q^{\delta_{ij}} E_j, 
		\quad \text{and} \quad
		L_i F_j \inv{L_i} = q^{-\delta_{ij}} F_j.
	\end{gathered}
	\end{align*}
\end{defn}
\begin{rmk}
	The algebra $\Uqgln$ is known as the \textit{semisimple} form of the quantum group associated to the Cartan type $A_{n-1}$, since for each element $\mu = \sum \mu_i \omega_i$ in the $\mathfrak{sl}_n$ weight lattice there is $K_\mu = \prod_{i=1}^n L_i^{\mu_i}$ in $\Uqgln$ \cite[Remark~9.1.1]{chari_pressley_1994}. This is consistent with the following observation. 
	
	Note that if $\epsilon_i$ denotes the standard basis $\mathfrak{h}^*(\mathfrak{sl}_n)$, which is orthonormal with respect to inner product $\langle \cdot, \cdot \rangle$ defined by \Cref{inner prod on roots}, then $\alpha_i = \epsilon_i - \epsilon_{i+1}$ is the $i$th simple positive root of $\mathfrak{sl}_n$ and the relations $L_i E_j \inv{L_i} = q^{\delta_{ij}}$ and $L_i F_j \inv{L_i} = q^{-\delta_{ij}} F_j$ are equivalent to 
$$
	L_i E_j \inv{L_i} = q^{\langle \epsilon_i, \alpha_j \rangle} E_j 
	\quad \text{and} \quad
	L_i F_j \inv{L_i} = q^{-\langle \epsilon_i, \alpha_j \rangle} F_j.
$$
\end{rmk}

In this chapter we consider $\Uqgln$ as a co-algebra with the comultiplication described in \Cref{delta convention}. Although usually merely a convention, the choice of comultiplication map is important in \Cref{commuting quantum actions section}, where we must use different conventions for each quantum group to ensure that $\Uqgln$ and $\Uqglm$ induce commuting subalgebras of module endomorphisms.

The natural $\Uqgln$-module $\Vn$ has a weight basis $v_i$, for $i = 1, \ldots, n$, so that
\begin{align*}
	K_i \rhd v_j = q^{\langle \alpha_i, \epsilon_j \rangle} v_j.
\end{align*}
Since the natural $\gln$-module is minuscule, it lifts to the quantum group $\Uqgln$ in such a way that the matrices of \Cref{glv matrices} describing the Chevalley generators remain the same \cite[Section~5A.1]{J}. The following explicit formulae define the natural $\Uqgln$-module:
\begin{align}\label{natural uqgln action}
	\begin{gathered}
		E_i \rhd v_j = \delta_{i+1, j} v_i, \quad 
		F_i \rhd v_j = \delta_{ij} v_{i+1}, 
		\quad \text{and} \quad
		L_i \rhd v_j = q^{\delta_{ij}} v_j.
	\end{gathered}
\end{align}

Now we compute $\mathrm{Sym}_q^2(\Vn)$ by studying the decomposition of $\Vn \otimes \Vn$ as a $\Uqgln$-module. We decompose $\Vn \otimes \Vn$ by finding \textit{highest weight vectors}, that is, simultaneous eigenvectors of the $K_i$ that are annihilated by every $E_i$. 
\begin{lem}\label[lem]{nat uqgln sq decomp}
	Let $\Vn$ denote the natural $\Uqgln$-module, as in \Cref{natural uqgln action}. As a $\Uqgln$-module, $\Vn \otimes \Vn$ decomposes as 
	$$\Vn \otimes \Vn \cong V_{\epsilon_1 + \epsilon_2} \oplus V_{2 \epsilon_1}$$
	The irreducible module $V_{\epsilon_1 + \epsilon_2}$ is spanned by
	$$v_i \otimes  v_j - \qinv v_j \otimes v_i, \quad i < j,$$
	and $V_{2\epsilon_1}$ is spanned by
	$$v_i \otimes v_i, \quad v_i \otimes  v_j + q v_j \otimes v_i, \quad i < j$$
\end{lem}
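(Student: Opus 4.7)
The plan is to decompose $\Vn \otimes \Vn$ by locating highest weight vectors, invoke the equivalence between classical and quantum type-$(1,\ldots,1)$ module categories to identify the summands by dimension, and then verify the stated spanning sets. First I would enumerate weights: since $\Vn$ has weight vectors $v_j$ of weight $\epsilon_j$, the $v_i \otimes v_j$ form a weight basis of $\Vn \otimes \Vn$ with weights $\epsilon_i + \epsilon_j$. The only dominant weights appearing are $2\epsilon_1$ (multiplicity $1$) and $\epsilon_1 + \epsilon_2$ (multiplicity $2$), so at most two distinct irreducibles can arise.

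Next I would find the two highest weight vectors. The vector $v_1 \otimes v_1$ is annihilated by every $E_i$ under $\Delta(E_i) = E_i \otimes K_i + 1 \otimes E_i$ and the $\Uqgln$-action of \Cref{natural uqgln action}, so it generates a copy of $V_{2\epsilon_1}$. For the weight $\epsilon_1 + \epsilon_2$, a short comultiplication computation gives
\begin{align*}
	E_1 \rhd (v_1 \otimes v_2) = v_1 \otimes v_1
	\quad \text{and} \quad
	E_1 \rhd (v_2 \otimes v_1) = q \, v_1 \otimes v_1,
\end{align*}
while $E_i$ for $i \geq 2$ vanishes on both summands. Solving shows the unique (up to scalar) weight-$(\epsilon_1+\epsilon_2)$ vector killed by every $E_i$ is $v_1 \otimes v_2 - \qinv v_2 \otimes v_1$, which generates a copy of $V_{\epsilon_1+\epsilon_2}$. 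Since every finite-dimensional type-$(1,\ldots,1)$ $\Uqgln$-module is semisimple with weight multiplicities matching its classical counterpart, and since the classical decomposition $\mathbb{C}^n \otimes \mathbb{C}^n \cong \mathrm{Sym}^2(\mathbb{C}^n) \oplus \bigwedge^2 \mathbb{C}^n$ has summands of dimensions $\binom{n+1}{2}$ and $\binom{n}{2}$ totaling $n^2$, the two irreducibles found above exhaust the decomposition.

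Finally, to produce the explicit spanning sets I would either apply the lowering operators $F_k$ repeatedly to each highest weight vector via $\Delta(F_k) = F_k \otimes 1 + \kinv \otimes F_k$ and track the orbit, or verify directly that the $\binom{n+1}{2}$ vectors $\{v_i \otimes v_i\} \cup \{v_i \otimes v_j + q v_j \otimes v_i : i<j\}$ span a $\Uqgln$-stable subspace of the correct dimension containing $v_1 \otimes v_1$, and similarly for the $\binom{n}{2}$ antisymmetric vectors. Both approaches reduce to the same $2 \times 2$ template calculation on each weight space $\epsilon_i + \epsilon_j$. The main obstacle is purely bookkeeping: the powers of $q$ introduced by the $K_i^{\pm 1}$ factors in the comultiplication must be tracked carefully to confirm that the coefficients $-\qinv$ and $+q$ in the stated spanning sets are the correct ones.
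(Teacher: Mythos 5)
Your proposal is correct and follows essentially the same route as the paper: exhibit $v_1 \otimes v_1$ and $v_1 \otimes v_2 - \qinv v_2 \otimes v_1$ as highest weight vectors, obtain the spanning sets by applying the lowering operators to these cyclic generators, and conclude the decomposition by comparing dimensions with the classical $\mathrm{Sym}^2 \oplus \bigwedge^2$ splitting via the equality of weight multiplicities for corresponding classical and quantum modules. Your explicit $2\times 2$ computation on the weight space $\epsilon_1+\epsilon_2$ correctly pins down the coefficient $-\qinv$ under the comultiplication convention of \Cref{delta convention}.
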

\begin{proof}
	Using the formulae above, is easy to check that $v_1 \otimes v_2 - \qinv v_2 \otimes v_1$ and $v_1 \otimes v_1$ are weight vectors of weights $\epsilon_1 + \epsilon_2$ and $2\epsilon_1$. In addition, a straightforward computation shows they are annihilated by all the $E_i$. The irreducible modules $V_{\epsilon_1 + \epsilon_2}$ and $V_{2\epsilon_1}$ are cyclic, so the spanning property follows from an inductive argument using the formulas of \Cref{natural uqgln action}. The $U(\mathfrak{gl}_n)$- and $\Uqgln$-modules parametrized by the same dominant weight have the same dimension, so the decomposition follows.
\end{proof}

Notice that the vectors spanning $V_{2 \epsilon_1}$ become elements of $\mathrm{Sym}^2(V)$ in the limit $q \to 1$. This means that $V_{2 \epsilon_1} \subset \Vn \otimes \Vn$ contains all positive eigenvectors of $\check{R}$ and therefore $\bigwedge_q(\Vn)$ is the quotient 
\begin{align*}
	\mathcal{T}(\Vn) / \big\langle \mathrm{Sym}^2_q(\Vn) \big\rangle 
	= \mathcal{T}(\Vn) / \langle v_i \otimes v_i, v_i \otimes v_j + q v_j \otimes v_i \mid i < j \rangle
\end{align*}
when $\Vn$ is the natural $\Uqgln$-module. Equivalently, $\bigwedge_q(\Vn)$ the unital associative algebra generated by $v_1, \ldots, v_n$ subject to the following relations:
\begin{align}\label{braided ext alg rels}
	\begin{split}
		v_i^2 &= 0, \quad i = 1, \ldots, n\\
		v_i v_j &= -q v_j v_i, \quad i < j.
	\end{split}
\end{align}
In particular, the set of
\begin{align}\label{braided ext alg basis}
	v(\ell) = \begin{cases}
		v_1^{\ell_1} v_2^{\ell_2} \cdots v_n^{\ell_n}, & \text{if } \ell_i \in \{0, 1\} \\
		0, & \text{otherwise},
	\end{cases}
\end{align}
for $\ell \in \{0, 1\}^n$, forms a basis of $\bigwedge_q(\Vn)$ \cite[Lemma~2.32]{berenstein}. This basis closely resembles the basis of $\bigwedge(\mathbb{C}^n)$ described by \Cref{bar v ext alg basis review}. In fact, $\bigwedge_q(\Vn) \cong \bigwedge(\mathbb{C}^n)$ as vector spaces but \textit{not} as algebras.

Since $\Uqgln$ acts on the tensor algebra $\mathcal{T}(\Vn)$ via the comultiplication map, it preserves the underlying algebraic structure of $\bigwedge_q(\Vn)$. In other words, $\bigwedge_q(\Vn)$ is a $\Uqgln$-\textit{module algebra}.

\begin{defn}\cite[Definition~4.1.1]{montgomery_1993}\label[defn]{module alg defn}
	An associative algebra $(A, \mu)$ with multiplication $\mu$ is a $\Uqg$-\textit{module algebra} if $A$ is a $\Uqg$-module such that the $\Uqg$-action preserves the algebraic structure of $A$, meaning that
	$$X \rhd \mu(a \otimes b) = \mu(\Delta(X) (a\otimes b)), \quad \text{and} \quad X \rhd 1 = \epsilon(X) 1,$$
	for all $a, b \in A$ and $X \in \Uqg$.
\end{defn}

We are now ready to show that the $\Uqgln$-action on $\bigwedge_q(\Vn)$ factors through the \textit{quantum Clifford algebra} $Cl_q(n)$, so that \Cref{uqgln clq embedding compatibility diag} commutes. Recall \Cref{clqnk defn}. In this section, we fix $k = 1$, use the shorthand $Cl_q(n)$ in place of $Cl_q(n, 1)$, and only consider the module $\Omega^0$ defined in \Cref{clqnk repn}. For convenience, we review the $Cl_q(n)$-action on $\bigwedge_q(\Vn)$:
\begin{align}\label{clqn action by raising lowering}
	\begin{split}
		\psi_j \rhd v(\ell) &= (-1)^{\ell_1 + \cdots + \ell_{j-1}} v(\ell - e_j), \\
		\psi_j^\dagger \rhd v(\ell) &= (-1)^{\ell_1 + \cdots + \ell_{j-1}} v(\ell + e_j), \quad \text{and}\\
		\omega_j \rhd v(\ell) &= q^{-\ell_j} v(\ell).
	\end{split}
\end{align}

The next proposition constructs the map $\Phi_{q, n}\colon \Uqgln \to Cl_q(n)$ illustrated in \Cref{uqgln clq embedding compatibility diag} by factoring the $\Uqgln$-action on $\bigwedge_q(\Vn)$ into products of $Cl_q(n)$ operators, much like \Cref{glv action by inner and exterior mult} factors the $\mathfrak{gl}(V)$-action in the classical case into products of the inner and exterior multiplication operators of \Cref{inner ext mult ops gln review}. This result is a quantum analogue of \Cref{gln cl embedding}. In this case we consider the \textit{quantized} inner and exterior multiplication operators $\iota_i^q$ and $\varepsilon_i^q$ satisfying
\begin{align}\label{quantized inner ext mult ops gln review}
\begin{split}
\iota_i^q(v(\ell)) &= (-q)^{\ell_1 + \cdots + \ell_{i-1}} v(\ell - e_i)
	= \pi_0 \bigg(\prod_{k < i} \omega_k^{-1} \psi_i \bigg) (v(\ell))
	\quad \text{and}\\
	\varepsilon_i^q(v(\ell)) &= (-\qinv)^{\ell_1 + \cdots + \ell_{i-1}} v(\ell + e_i)
	= \pi_0 \bigg(\prod_{k < i} \omega_k \psi_i^\dagger \bigg) (v(\ell))
\end{split}
\end{align}
for any $v(\ell) \in \bigwedge_q(\Vn)$\longer{, as in \Cref{q inner and ext mult ops}}. These operators consider the underlying algebra structure of $\bigwedge_q(\Vn)$ and include correction terms to correctly account for powers of $(-q)$ that result from the exchange of $v_i$'s needed to express the result of an inner or exterior multiplication in terms of the $v(\ell)$ basis defined by \Cref{braided ext alg basis}.

\begin{prop}\label[prop]{uqgln clqn embedding}
Let $\iota^q_i$ and $\varepsilon^q_i$ denote the quantized inner and exterior multiplication operators defined by \Cref{quantized inner ext mult ops gln review}. There is an algebra map $\Phi_{q, n}\colon \Uqgln \to Cl_q(n)$ satisfying
\begin{align}	
	E_i \to \varepsilon_i^q \iota_{i+1}^q 
	\qquad \text{and} \qquad 
	F_i \to \varepsilon_{i+1}^q \iota_{i}^q,
\end{align}
for $i = 1, \ldots, n-1$. In terms of $Cl_q(n)$ generators, $\Phi_{q, n}$ maps
\begin{align}
	E_i \to \qinv \omega_i^{-1} \psi_i^\dagger \psi_{i+1}^{\pdg}, 
	\quad 
	F_i \to \omega_i \psi_{i+1}^\dagger \psi_i^{\pdg}, 
	\quad \text{and} \quad 
	L_i \to \omega_{i}^{-1}.
\end{align}
\end{prop}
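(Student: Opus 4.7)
The strategy mirrors the classical Lie algebra embedding $\mathfrak{gl}_n \hookrightarrow Cl(n)$ of \Cref{gln cl embedding}: I must verify that the proposed images of the generators of $\Uqgln$ satisfy the defining relations of $\Uqgln$ inside the associative algebra $Cl_q(n)$. Unlike the classical case, I cannot simply argue through the action on $\bigwedge_q(\Vn)$, because this representation is not faithful; the verification must take place in $Cl_q(n)$ itself, using its structural $\omega$-$\psi$ commutation relations catalogued in \crossrefcliffch.

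First I would reconcile the two presentations of $\Phi_{q,n}$. Substituting the definitions of $\iota^q_i$ and $\varepsilon^q_i$ from \Cref{quantized inner ext mult ops gln review} and repeatedly applying the commutation relations $\omega_k \psi_j = q^{\delta_{jk}} \psi_j \omega_k$ and $\omega_k \psi_j^\dagger = q^{-\delta_{jk}} \psi_j^\dagger \omega_k$ of $Cl_q(n)$ (read off consistently from the action formulas \eqref{clqn action by raising lowering}), the telescoping products $\prod_{k<i} \omega_k$ and $\prod_{k<i+1} \omega_k^{-1}$ collapse, leaving
$$\varepsilon_i^q \iota_{i+1}^q = q^{-1} \omega_i^{-1} \psi_i^\dagger \psi_{i+1}, \qquad \varepsilon_{i+1}^q \iota_i^q = \omega_i \psi_{i+1}^\dagger \psi_i.$$
The $L$-subalgebra relations among the $\omega_i^{\pm 1}$ are immediate since the $\omega_i$ commute and are invertible, and the weight-shift relations $L_i E_j L_i^{-1} = q^{\delta_{ij}} E_j$, $L_i F_j L_i^{-1} = q^{-\delta_{ij}} F_j$, along with the consistency $K_i = L_i L_{i+1}^{-1}$, follow from a short computation with the same $\omega$-$\psi$ commutation rules.

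For the cross-commutator relation $[E_i, F_j] = \delta_{ij}(K_i - K_i^{-1})/(q - q^{-1})$ I would expand both products and collapse using the canonical anticommutation relations \eqref{cac review}; the required combination $\omega_i^{-1}\omega_{i+1} - \omega_i\omega_{i+1}^{-1}$ on the right-hand side should emerge from the $Cl_q(n)$ identity relating $\psi_k^\dagger \psi_k$ to a polynomial in $\omega_k$ (the $q$-analogue of the number operator). The main obstacle will be the $q$-Serre relations. Here I would exploit the reformulation \eqref{alt uq Serre rels}: since $a_{ij} = -1$ for adjacent indices in type $\mathbf{A}$, the $E$-Serre relations reduce to the nested $q$-commutator identities $[E_i, [E_i, E_j]_q]_{q^{-1}} = 0$ for $|i - j| = 1$, which I would verify by expanding into a finite sum of $\omega \cdot \psi^\dagger \psi^\dagger \psi \psi$-monomials, discarding terms that vanish by $(\psi_k)^2 = (\psi_k^\dagger)^2 = 0$, and checking that the surviving $q$-power coefficients cancel pairwise. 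The $|i-j| > 1$ cases reduce to commutativity of $\psi$-monomials supported on disjoint index sets, and the $F$-Serre relations follow by the same argument with the roles of $\psi$ and $\psi^\dagger$ swapped.
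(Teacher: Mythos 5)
Your proposal is correct and follows essentially the same route as the paper: a direct verification of the $\Uqgln$ relations inside $Cl_q(n)$, using the $q$-deformed anticommutation relations (note these, not the classical relations \eqref{cac review}, are what produce the $\omega$'s in $[\widetilde E_i,\widetilde F_i]$), the reformulation \eqref{alt uq Serre rels} for the adjacent-index Serre relations, and the observation that $(\psi_k^\dagger)^2=0$ kills the nested $q$-commutator. The paper handles the $F$-Serre relations via the $*$-structure on $Cl_q(nm)$, which is the precise form of your ``swap $\psi$ and $\psi^\dagger$'' argument.
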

\begin{rmk}
	Recall \Cref{inner ext operator action review} and \Cref{gln cl embedding}: in the classical case the root vectors $E_i$ and $F_i$ of $\gln$ map to $\varepsilon_i \iota_{i+1}$ and $\varepsilon_{i+1} \iota_i$ in $Cl\left(\mathbb{C}^n \oplus (\mathbb{C}^n)^*\right)$.
\end{rmk}
\begin{proof}
	The claims follow from direct calculation. It suffices to show that the images $\widetilde{E}_i, \widetilde{F}_i$, and $\widetilde{K}_i$ of $E_i, F_i$, and $K_i$ under $\Phi_{q, n}$ satisfy the relations defining $\Uqgln$. Most relations are easy consequences of the relations defining $Cl_q(n)$ and \crossrefcliff{identities in clq}, which is a generalized analogue of \cite[Lemma~3.1]{hayashi_1990}. For instance, we may easily obtain
	\begin{align*}
		[\widetilde{E}_i, \widetilde{F}_i] 
		= [\psi_{i+1}^{\pdg}\psi_i^\dagger, \psi_i^{\pdg} \psi_{i+1}^\dagger] 
		= \frac{(\omega_i \omega_{i+1}^{-1}) - (\omega_i \omega_{i+1}^{-1})^{-1}}{q - \qinv} 
		= \frac{\widetilde{K}_i - \widetilde{K}_i^{-1}}{q - \qinv}.
	\end{align*}
	When $|i - j| > 1$, the quantum Serre relations follow immediately from the anticommutation relations $\psi_i \psi_i^\dagger + q^{\pm 1} \psi_i^\dagger \psi_i = \omega_i^{\mp 1}$. We use \Cref{alt uq Serre rels} to verify the remaining quantum Serre relations. Applying \crossrefcliff{q comm 4 psi}, we find that
	\begin{align*}
		[\tilde{E_i}, \widetilde{E}_{i+1}]_q &= (\psi_i^\dagger \psi_{i+1}^{\pdg}) (\psi_{i+1}^\dagger \psi_{i+2}^{\pdg}) - q (\psi_{i+1}^\dagger \psi_{i+2}^{\pdg})(\psi_i^\dagger \psi_{i+1}^{\pdg}) 
		= \omega_{i+1}^{-1} \psi_i^\dagger \psi_{i+2}^{\pdg},
	\end{align*}
	so
	\begin{align*}
		[\tilde{E_i}, [\tilde{E_i}, \widetilde{E}_{i+1}]_q]_{\qinv} 
		= -\omega_{i+1}^{-1} \big(q \psi_{i+1}^{\pdg}(\psi_i^\dagger)^2 \psi_{i+2}^{\pdg}- \qinv \psi_{i+2} (\psi_i^\dagger)^2 \psi_{i+1}^{\pdg}\big) 
		= 0.
	\end{align*}
	Since $\Phi_{q, n}(F_i) = \psi_{i+1}^\dagger \psi_i^{\pdg}(q \omega_i^{-1}) = \Phi_{q, n}(E_i)^*$, it follows that the $\Phi_{q, n}(F_i)$ also satisfy the $q$-Serre relations.
\end{proof}

The next proposition proves that the $\Uqgln$-action on $\bigwedge_q(\Vn)$ induced by $\Phi_{q, n}$ coincides with the $\Uqgln$-module algebra action.

\begin{prop}\label[prop]{uqgln action factors through clqn}
	The $\Uqgln$-action on its module algebra $\bigwedge_q(\Vn)$ factors through the quantum Clifford algebra $Cl_q(n)$. Concretely, \Cref{uqgln clq embedding compatibility diag} commutes.
\end{prop}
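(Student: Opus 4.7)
The plan is to verify commutativity of diagram \eqref{uqgln clq embedding compatibility diag} by comparing both actions on the basis $\{v(\ell) : \ell \in \{0,1\}^n\}$ of $\bigwedge_q(\Vn)$ from \eqref{braided ext alg basis}. Since the Chevalley generators $E_j, F_j, L_j$ generate $\Uqgln$ and the $v(\ell)$ span $\bigwedge_q(\Vn)$, it suffices to verify agreement generator-by-generator on each basis vector, using the explicit $Cl_q(n)$-action \eqref{clqn action by raising lowering} and the iterated comultiplication $\Delta^{(k-1)}$ applied to monomials $v_{i_1} \cdots v_{i_k}$.

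First I would dispatch the Cartan elements. Since $\Phi_{q,n}(L_j) = \omega_j^{-1}$ acts on $v(\ell)$ as $q^{\ell_j}$ by \eqref{clqn action by raising lowering}, while iterating $\Delta(L_j) = L_j \otimes L_j$ gives $L_j \rhd v_{i_1} \cdots v_{i_k} = \prod_a q^{\delta_{j, i_a}} v(\ell) = q^{\ell_j} v(\ell)$, the two agree. Next I would handle $E_j$: both actions vanish on $v(\ell)$ unless $\ell_j = 0$ and $\ell_{j+1} = 1$, in which case a direct computation using \eqref{clqn action by raising lowering} yields
\begin{equation*}
\Phi_{q,n}(E_j) \rhd v(\ell) = q^{-1} \omega_j^{-1} \psi_j^\dagger \psi_{j+1} \rhd v(\ell) = v(\ell + e_j - e_{j+1}),
\end{equation*}
since the signs $(-1)^{\ell_1 + \cdots + \ell_j}$ and $(-1)^{\ell_1 + \cdots + \ell_{j-1}}$ combine to $+1$ (as $\ell_j = 0$), and the factor of $q$ contributed by $\omega_j^{-1}$ cancels the prefactor $q^{-1}$. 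On the module-algebra side, writing $v(\ell) = v_{i_1} \cdots v_{i_k}$ with $i_1 < \cdots < i_k$ and letting $a^*$ denote the position with $i_{a^*} = j+1$, the iterated comultiplication $\Delta^{(k-1)}(E_j) = \sum_a 1^{\otimes(a-1)} \otimes E_j \otimes K_j^{\otimes(k-a)}$ has only the $a = a^*$ summand surviving (because $E_j v_i = \delta_{j+1, i} v_j$), and $K_j v_{i_a} = v_{i_a}$ for every $a > a^*$ (since $i_a > j+1$). The result is $v_{i_1} \cdots v_{i_{a^*-1}} v_j v_{i_{a^*+1}} \cdots v_{i_k}$, already in canonical order because $\ell_j = 0$ forces $i_{a^* - 1} < j$; hence it equals $v(\ell + e_j - e_{j+1})$, matching the Clifford side.

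The computation for $F_j$ proceeds symmetrically: both sides vanish unless $\ell_j = 1$ and $\ell_{j+1} = 0$, and both yield $v(\ell - e_j + e_{j+1})$. On the Clifford side, the $\omega_j$ prefactor contributes $q^0 = 1$ because the $j$-th bit of the resulting state is $0$; on the module-algebra side, the corresponding $K_j^{-1}$ factors appearing in $\Delta^{(k-1)}(F_j) = \sum_a (K_j^{-1})^{\otimes(a-1)} \otimes F_j \otimes 1^{\otimes(k-a)}$ act trivially on each $v_{i_b}$ with $b < a^*$ because $i_b < j$. The only genuine obstacle is careful sign and $q$-factor bookkeeping, but these cancel cleanly because the bit-flipping pattern of $\psi_j^\dagger \psi_{j+1}$ and $\psi_{j+1}^\dagger \psi_j$ in the Clifford representation exactly mirrors the way $\Delta^{(k-1)}$ distributes the $\Uqgln$-action across a wedge monomial in $\bigwedge_q(\Vn)$.
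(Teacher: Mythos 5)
Your proposal is correct and follows essentially the same route as the paper: both sides are computed on the monomial basis $v(\ell)$, with the module-algebra side expanded via the iterated comultiplication and the Clifford side read off from \eqref{clqn action by raising lowering}, then matched generator by generator. The only cosmetic difference is that you normalize to the case $\ell_j=0,\ \ell_{j+1}=1$ where the coefficient is exactly $1$, whereas the paper tracks the $(-q)$-powers in general and packages them into the quantized operators $\varepsilon_i^q\iota_{i+1}^q$.
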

\begin{proof}
	Since $\bigwedge_q(\Vn)$ is a $\Uqgln$-module algebra, it follows that
	\begin{align*}
		E_i \rhd v(\ell) &= (E_i v_1^{\ell_1}) (K_i v_2^{\ell_2}) (K_i v_3^{\ell_3}) \cdots (K_i v_n^{\ell_n}) 
		 + v_1^{\ell_1} (E_i v_2^{\ell_2}) (K_i v_3^{\ell_3}) \cdots (K_i v_n^{\ell_n}) \\
		&\quad + \cdots + v_1^{\ell_1} v_2^{\ell_2} \cdots v_{n-1}^{\ell_{n-1}}(E_i v_n^{\ell_n}) \\
		&= \sum_{j=1}^n 
			\delta_{i+1,j} \cdot \ell_j \cdot q^{\sum_{k > j} \langle \alpha_i, \ell_k \epsilon_k \rangle} 
			(v_1^{\ell_1} \cdots v_{j-1}^{\ell_{j-1}} 
				\cdot v_i \cdot 
			v_{j+1}^{\ell_{j+1}} \cdots v_n^{\ell_n})
		\\
		&= (-q)^{\ell_1 + \cdots \ell_{i-1}} v_i 
			\cdot \ell_{i+1}
			(v_1^{\ell_1} \cdots v_{i}^{\ell_{i}} v_{i+2}^{\ell_{i+2}} \cdots v_n^{\ell_n} ) \\
		&=  v_i \cdot \iota_{i+1}^q v(\ell) \\
		&= \varepsilon_i^q \iota_{i+1}^q v(\ell) \\
		&= \Phi_{q, n}(E_i) v(\ell).
	\end{align*}
	In the second-to-last equality we use the commutation relations \eqref{braided ext alg rels} defining the braided exterior algebra $\bigwedge_q(\Vn)$. A similar calculation shows that $F_i \rhd v(\ell) = \Phi_{q, n}(F_i) v(\ell)$.
	

	Finally, notice that
	\begin{align*}
		L_i \rhd v(\ell) &= (L_i v_1^{\ell_1}) \cdots (L_i v_n^{\ell_n}) 
		= q^{\ell_i} v(\ell) 
		= \omega_i^{-1} v(\ell) 
		= \Phi_{q, n}(L_i) v(\ell).
		&\qedhere
	\end{align*}
\end{proof}

The map $\Phi_{q, n}$ immediately yields a multiplicity-free decomposition of the $\Uqgln$-module algebra $\bigwedge_q(\Vn)$. The irreducible $\Uqgln$-modules in $\bigwedge(\Vn)$ are parametrized by the same dominant weights as in the corresponding classical result.
\begin{prop}\label[prop]{uqgln ext alg decomp}
	As a $\Uqgln$-module, 
	$$\textstyle \bigwedge_q(\Vn) \cong \displaystyle \bigoplus_{j = 0}^n V_{\gamma_j}.$$
	Here $V_\mu$ denotes the irreducible $\Uqgln$-module with highest weight $\mu$, and each $\gamma_j = \sum_{k=1}^j e_k$ denotes a fundamental weight of $\gln$.
\end{prop}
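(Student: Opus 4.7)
The plan is to exploit the $\mathbb{Z}$-grading on $\bigwedge_q(\Vn)$ inherited from $\mathcal{T}(\Vn)$ and exhibit an explicit highest weight vector in each homogeneous component. Since the relations \eqref{braided ext alg rels} are homogeneous, $\bigwedge_q(\Vn)$ is graded, and its $\Uqgln$-module algebra structure from \Cref{uqgln action factors through clqn} is degree-preserving: indeed, \Cref{uqgln clqn embedding} expresses $\Phi_{q,n}(E_i)$ and $\Phi_{q,n}(F_i)$ as products of a single raising operator with a single lowering operator, so each degree $j$ component $\bigwedge_q^j(\Vn) = \mathrm{span}\{v(\ell) : |\ell| = j\}$ is a $\Uqgln$-submodule of dimension $\binom{n}{j}$.

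Next I would exhibit, in each component, a highest weight vector of weight $\gamma_j$. The natural candidate is
\[
v_{\gamma_j} \;:=\; v_1 v_2 \cdots v_j \;=\; v(\ell^{(j)}), \qquad \ell^{(j)} = (\underbrace{1,\ldots,1}_{j}, 0, \ldots, 0).
\]
From \Cref{uqgln action factors through clqn} we have $L_i \rhd v(\ell) = q^{\ell_i} v(\ell)$, so $v_{\gamma_j}$ is a weight vector of weight $\sum_{k=1}^j e_k = \gamma_j$. To check that $E_i \rhd v_{\gamma_j} = 0$ for every $i$, use $\Phi_{q,n}(E_i) = \varepsilon_i^q \iota_{i+1}^q$: if $i + 1 \leq j$, then $\ell_i^{(j)} = 1$ so the exterior multiplication $\varepsilon_i^q$ attempts to occupy a filled slot and the result vanishes by \eqref{braided ext alg rels}; if $i + 1 > j$, then $\ell_{i+1}^{(j)} = 0$ and the contraction $\iota_{i+1}^q$ already kills $v_{\gamma_j}$. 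Hence $v_{\gamma_j}$ is a $\Uqgln$-highest weight vector of weight $\gamma_j$ sitting inside $\bigwedge_q^j(\Vn)$.

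It follows that the cyclic submodule $\Uqgln \cdot v_{\gamma_j} \subseteq \bigwedge_q^j(\Vn)$ is isomorphic to $V_{\gamma_j}$. By the correspondence between irreducible $\Uqg$-modules and irreducible $\Ug$-modules with matching weight multiplicities (noted at the end of \Cref{not and conv}), $\dim V_{\gamma_j}$ equals the dimension of the classical $j$-th fundamental representation of $\gln$, namely $\binom{n}{j}$. Since $\dim \bigwedge_q^j(\Vn) = \binom{n}{j}$ as well, we conclude $\bigwedge_q^j(\Vn) \cong V_{\gamma_j}$, and summing over $j = 0, 1, \ldots, n$ gives the claimed decomposition. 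There is no serious obstacle: the entire argument is a direct dimension count together with the explicit description of the highest weight vector, both made transparent by the basis $\{v(\ell)\}$ and the formulas of \Cref{uqgln clqn embedding,uqgln action factors through clqn}.
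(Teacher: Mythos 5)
Your proposal is correct and follows essentially the same route as the paper: exhibit the highest weight vectors $v(\gamma_j) = v_1\cdots v_j$, verify $E_i \rhd v(\gamma_j) = 0$ via the raising/lowering description of $\Phi_{q,n}(E_i)$, and conclude by a dimension count against the classical decomposition $\bigwedge(V) \cong \bigoplus_{j=0}^n \bigwedge^j(V)$. Your version merely makes the count explicit on each graded component $\binom{n}{j} = \dim V_{\gamma_j}$ rather than globally, which is a harmless refinement.
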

\begin{proof}
	Let $\alpha_i = e_i - e_{i+1}$. Notice that $v(\gamma_j)$ is a $\Uqgln$-highest weight vector, since it is annihilated by each $E_i$:
	\begin{align*}
	E_i \rhd v(\gamma_j)
		= \psi_i^\dagger \psi_{i+1}^{\pdg} v(\gamma_j)
		= v(\gamma_j + \alpha_i)
		= 0.
	\end{align*}
	The claim now follows by a dimension count, considering the classical decomposition of $\gln$-modules $\bigwedge(V) \cong \bigoplus_{j = 0}^n \bigwedge^j(V)$.
\end{proof}


	\subsection{Commuting actions of $\Uqgln$ and $\Uqglm$ on $\bigwedge_q(V^{(nm)})$}
	\label{commuting quantum actions section}

Fix $n$ and $m$. Recall \Cref{gln glm embeddings factor through glnm cd}, which illustrates the construction of commuting embeddings $\lambda$ and $\rho$ of $\gln$ and $\glm$ into the Clifford algebra $\Clnm$ as a composition of known Lie algebra maps. These embeddings are critical to our proof of the skew $GL_n \times GL_m$-duality \Cref{skew gln glm duality} in the classical case.

In this subsection, we mimic that construction, or at least part of it, to obtain the quantum analogues $\lambda_q$ and $\rho_q$ mapping $\Uqgln$ and $\Uqglm$ into $Cl_q(nm)$. Much like their classical counterparts, the maps $\lambda_q$ and $\rho_q$ play a prominent role in the proof of our skew $\Uqgln \otimes \Uqglm$-duality \Cref{uqgln uqglm duality}: \Cref{uqgln uqglm embeddings commute} proves that they define commuting subalgebras of $\End(\bigwedge_q(V^{(nm)})$, which we use to compute the multiplicity-free decomposition of $\bigwedge_q(V^{(nm)})$ in \Cref{type a q mf decomposition}.

For the rest of this section, we focus on the braided exterior algebra $\bigwedge_q(V^{(nm)})$, as defined by \Cref{braided ext alg as tensor alg quotient}, with $V^{(nm)}$ denoting the natural $\Uqglnm$-module. Recall that  $V^{(nm)}$ has a $\Uqglnm$-weight basis $v_i$, for $i = 1, \ldots, nm$. In addition, recall that $\bigwedge_q(V^{(nm)})$ is generated as an algebra by the same $v_i$, subject to the relations in \eqref{braided ext alg rels}. In particular, $\bigwedge_q(V^{(nm)})$ has a basis $v(\ell)$, for $\ell \in \{0, 1\}^{nm}$, as described by \Cref{braided ext alg basis}. In this case, we say that each basis element $v(\ell)$ describes a unique state of \textit{occupied} and \textit{vacant} positions in an $n \times m$ grid.

To begin, we obtain $\lambda_q\colon \Uqgln \to Cl_q(nm)$ as the composition resulting from the following diagram, which is the quantum analogue of the top half of \Cref{gln glm embeddings factor through glnm cd}:
\begin{equation}\label[diag]{uqgln embedding into clqnm diag}
	\begin{tikzcd}[column sep={3.6cm,between origins}, row sep={1.8cm,between origins}]
		\Uqgln \ar[r, "\Delta^{(m-1)}"]  \ar[d, swap, "\Theta"] 
 		\ar[drr, dashed, blue!60, "\lambda_q"]
 		& \Uqgln^{\otimes m} \ar[r, "\Phi_{q, n}^{\otimes m}"]
 		& Cl_q(n)^{\otimes m} \ar[d, "\, \Gamma_q"] \rar
 		& \End\left(\bigwedge_q(\Vn)^{\otimes m}\right) \arrow[d, "\,\, \cong", ] \\
		\Uqglnm \ar[rr, swap, "\Phi_{q, nm}"]
 		&
 		& Cl_q(nm) \rar
 		& \End\left(\bigwedge_q(V^{(nm)})\right) 
 	\end{tikzcd}
\end{equation}
Recall that \Cref{clqn clqnm embedding} defines the isomorphism $\Gamma_q$.

On one hand, there is a $\Uqgln$-action on $\bigwedge_q(V^{(nm)})$ that factors through $\Uqglnm$. Motivated by the embedding $\gln \to \glnm$ of \Cref{gln embedding into glnm}, we construct an algebra map $\Theta\colon \Uqgln \to \Uqglnm$ in \Cref{uqgln uqglnm embedding}. Recall \Cref{natural uqgln action}, which describes the action of the $\Uqglnm$ generators on $V^{(nm)}$. As in the classical case, the $\Uqgln$ root vectors ``align'' with those of $\Uqglnm$, as depicted in \Cref{root vector action on basis}. Composing $\Theta$ with the $Cl_q(nm)$-representation $\Phi_{q, nm}$ defined in \Cref{uqgln clqn embedding} results in an algebra map $\Uqgln \to \End\left(\bigwedge_q(V^{(nm)})\right)$. 

On the other hand, the braided exterior algebra $\bigwedge_q(\Vn)$ is a $\Uqgln$-module, so $\Uqgln$ acts on $\bigwedge(\Vn)^{\otimes m}$ via comultiplication. 

\Cref{isom tensor power braided ext alg} proves that $\bigwedge_q(\Vn)^{\otimes m} \cong \bigwedge_q(V^{(nm)})$ as $\Uqgln$-modules, so the two $\Uqgln$-actions on $\bigwedge_q(V^{(nm)})$ in fact coincide.

The next proposition defines the map $\Theta\colon \Uqgln \to \Uqglnm$ illustrated in \Cref{uqgln embedding into clqnm diag} as a first step in defining a $\Uqgln$-action on $\bigwedge_q(V^{(nm)})$. It is the quantum analogue of \Cref{gln embedding into glnm}. To the best of our knowledge, this map is new.

\begin{prop}\label[prop]{uqgln uqglnm embedding}
	Recall the superscript notation of \Cref{generator superscript}: $X_j^{(p)}$ denotes a $U_q(\mathfrak{gl}_p)$ generator. Define
	\begin{align*}
		\Lambda_{i, <j} = \prod_{p < j} K_{i + (p-1)n}^{(nm)},
		\quad \text{and} \quad
		\Lambda_{i, >j} = \prod_{p > j} K_{i + (p-1)n}^{(nm)},
	\end{align*}
	by taking the product of all $K_p^{(nm)}$ generators on the $i$th row to the left, and respectively to the right, of the $j$th column.

	There is an algebra homomorphism $\Theta\colon \Uqgln \rightarrow \Uqglnm$ satisfying
	\begin{align*}
		E_i^{(n)} &\to \sum_{j=1}^m E_{i + (j-1)n}^{(nm)} \Lambda_{i, >j} \\
		F_i^{(n)} &\to \sum_{j=1}^m \Lambda_{i, <j}^{-1} F_{i + (j-1)n}^{(nm)}  \\
		L_i^{(n)} &\to \prod_{j=1}^m L_{i + (j-1)n}^{(nm)}.
	\end{align*}
\end{prop}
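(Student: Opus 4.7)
The natural approach is to factor $\Theta$ as the composition $\iota \circ \Delta^{(m-1)}$, where $\Delta^{(m-1)} \colon \Uqgln \to \Uqgln^{\otimes m}$ is the $(m-1)$-fold iterated coproduct and $\iota \colon \Uqgln^{\otimes m} \to \Uqglnm$ is a \emph{block embedding} that sends the $j$-th tensor factor to a Levi-type subalgebra of $\Uqglnm$ associated with the $j$-th column of the grid from \Cref{basis rect diagram}. The iterated coproduct is an algebra map by coassociativity, so the proof reduces to constructing $\iota$; the stated formulas for $\Theta$ then follow by applying $\iota$ to $\Delta^{(m-1)}$ of each Chevalley generator and using the coproducts in \Cref{delta convention}.

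To build $\iota$, for each $j = 1, \ldots, m$ I would first define $\mathcal{U}^{(j)} \subset \Uqglnm$ as the subalgebra generated by $E_{i+(j-1)n}^{(nm)}$ and $F_{i+(j-1)n}^{(nm)}$ for $i = 1, \ldots, n-1$, together with $L_{i+(j-1)n}^{(nm)}$ for $i = 1, \ldots, n$. The Chevalley indices involved form the contiguous block $\{1+(j-1)n, \ldots, n+(j-1)n\}$ and thus pick out an $A_{n-1}$ sub-Dynkin diagram of $\Uqglnm$, so the listed generators inherit the defining relations of $\Uqgln$ from \Cref{uqgln def}. This yields an algebra map $\Uqgln \to \mathcal{U}^{(j)}$ sending $X_i^{(n)} \mapsto X_{i+(j-1)n}^{(nm)}$ for $X \in \{E, F, L\}$.

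The main obstacle is then showing that $\mathcal{U}^{(j)}$ and $\mathcal{U}^{(j')}$ commute elementwise whenever $j \neq j'$, so that the per-block maps assemble into a single algebra homomorphism $\iota$. I would verify this generator-by-generator. For $j < j'$, the $E/F$-indices in block $j$ lie in $\{1+(j-1)n, \ldots, n-1+(j-1)n\}$ while those in block $j'$ lie in $\{1+(j'-1)n, \ldots, n-1+(j'-1)n\}$, so any such pair of indices differs by at least $2$; the corresponding Cartan entry in $\Uqglnm$ vanishes and the $q$-Serre relation collapses to ordinary commutation. Similarly, $L_a^{(nm)}$ from one block commutes with every $E_b^{(nm)}$ and $F_b^{(nm)}$ from another block because the index equation $c + (j-1)n = d + (j'-1)n$ has no solution with $c, d \in \{1, \ldots, n\}$ when $j \neq j'$, and all $L_a^{(nm)}$ commute trivially amongst themselves. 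This is exactly the step where the column-major convention \eqref{vk basis} is essential: any other arrangement would force simple root vectors from neighboring blocks to share adjacent Chevalley indices and thereby fail to commute.

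With $\iota$ in hand, I would close by computing $\iota \circ \Delta^{(m-1)}$ on each Chevalley generator. Coassociativity and induction give $\Delta^{(m-1)}(E_i^{(n)}) = \sum_{j=1}^m 1^{\otimes(j-1)} \otimes E_i^{(n)} \otimes (K_i^{(n)})^{\otimes(m-j)}$, and pushing this through $\iota$ yields $\sum_j E_{i+(j-1)n}^{(nm)} \Lambda_{i,>j}$, where the $K$-factors coming from distinct blocks mutually commute so the product $\Lambda_{i,>j}$ is unambiguous. The calculation for $F_i^{(n)}$ is analogous, and for the group-like $L_i^{(n)}$ it reduces to $(L_i^{(n)})^{\otimes m} \mapsto \prod_j L_{i+(j-1)n}^{(nm)}$. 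Apart from the block commutation verification, every remaining step is routine bookkeeping driven by the Hopf structure of $\Uqgln$.
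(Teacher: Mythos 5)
Your proof is correct, but it takes a genuinely different route from the paper. The paper proves the proposition by brute force: it verifies each defining relation of $\Uqgln$ on the images $\widetilde{E}_i, \widetilde{F}_i, \widetilde{K}_i$ directly inside $\Uqglnm$, with the bulk of the work going into the $q$-Serre relations (the $[\widetilde{E}_i, I]_{\qinv}$ and $[\widetilde{E}_i, II]_{\qinv}$ computations). You instead factor $\Theta = \iota \circ \Delta^{(m-1)}$, reducing the claim to two structural facts: the standard block (Levi-type) embedding of $\Uqgln$ along a contiguous $A_{n-1}$ subdiagram of $\Uqglnm$, and elementwise commutation of distinct blocks, which holds because Chevalley indices from distinct columns differ by at least $2$ (so the relevant Cartan entries vanish) and because $[E_a, F_b] = \delta_{ab}(\cdots)$ and the $L$--$E$ relations are already trivial for non-coincident indices. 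Your argument is shorter, avoids the $q$-Serre computation entirely, and explains \emph{why} the formulas have the shape they do; it is also exactly the structural content the paper itself invokes later, when \Cref{uqgln embedding into clqnm diag} and \Cref{isom tensor power braided ext alg} assert $\lambda_q = \Gamma_q \circ \Phi_{q,n}^{\otimes m} \circ \Delta^{(m-1)}$. One small point to tighten: your commutation check as written only cites the vanishing Cartan entries, which handles the $E$--$E$ and $F$--$F$ pairs; you should also record the (even easier) cross relations $[E_a, F_b] = 0$ for $a \neq b$ and $L_a E_b L_a^{-1} = E_b$ for $a \neq b$, as you implicitly do in the index argument. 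The trade-off is that the paper's computational proof doubles as a template for \Cref{uqglm clqnm embedding}, where no analogue of your block embedding exists because $\rho_q$ does not factor through $\Uqglnm$.
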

\begin{proof}
	It suffices to show that the images $\widetilde{E}_i, \widetilde{F}_i$ and $\widetilde{K}_i$ of $E_i^{(n)}, F_i^{(n)}$ and $K_i^{(n)}$ under $\Theta$ satisfy the relations of \Cref{uqgln def}. This follows from the definition of $\Uqglnm$ directly from a calculation. 
	
	Recall notation of \Cref{inner prod on roots}. In what follows $a_{ij}$ denotes the $(i,j)$ entry of the Cartan matrix of $\gln$, while each $\alpha_k$, for $k = 1, \ldots, nm$, denotes a simple positive root of $U_q(\mathfrak{gl}_{nm})$. In addition, the form $\langle \cdot, \cdot \rangle$ denotes an inner product on $\mathfrak{h}_{\mathfrak{gl}_{nm}}^* \times \mathfrak{h}_{\mathfrak{gl}_{nm}}^*$. To begin, note that
	\begin{align*}
		\widetilde{K}_j \widetilde{E}_i \inv{\widetilde{K}_j} 
		&= \sum_{b=1}^m q^{\langle \alpha_{i + (b-1)n}, \alpha_{j + (b-1)n} \rangle} E_{i + (b-1)n}^{(nm)} \Lambda_{i, >b} 
		= q^{2\delta_{ij} - \delta_{i,j+1} - \delta_{i+1, j}} \widetilde{E}_i 
		= q^{a_{ij}} \widetilde{E}_i.
	\end{align*}
	A similar calculation shows that $\widetilde{K}_j \widetilde{F}_i \inv{\widetilde{K}_j} = q^{-a_{ij}} \widetilde{F}_i$.

	Next observe that
	\begin{align*}
		[\widetilde{E}_i, \widetilde{F}_j] 
		&= \sum_{a, b} q^{\langle \alpha_{i + (a-1)n}, \sum_{c < b} \alpha_{j + (c-1)n} \rangle} \Lambda_{j, <b}^{-1}\Lambda_{i, >a}  E_{i + (a-1)n}^{(nm)} F_{j + (b-1)n}^{(nm)} \\
		&\quad - \sum_{a, b} q^{\langle \alpha_{j + (b-1)n}, \sum_{c > a} \alpha_{i + (c-1)n} \rangle} \Lambda_{j, <b}^{-1}\Lambda_{i, >a} F_{j + (b-1)n}^{(nm)} E_{i + (a-1)n}^{(nm)} \\
		&= \sum_a \Lambda_{j, <a}^{-1}\Lambda_{i, >a}  [E_{i + (a-1)n}^{(nm)}, F_{j + (a-1)n}^{(nm)}] \\
		&= {\delta_{ij} \over q - \qinv} \bigg(\sum_a \prod_{c < a} \big(K_{i + (c-1)n}^{(nm)}\big)^{-1} \cdot \prod_{c \geq a} K_{i + (c-1)n}^{(nm)} \\
		&\qquad - \sum_a \prod_{c \leq a} \big(K_{i + (c-1)n}^{(nm)}\big)^{-1}  \cdot \prod_{c > a} K_{i + (c-1)n}^{(nm)}\bigg) \\
		&= {\delta_{ij} \over q - \qinv} \bigg(\prod_{c = 1}^m K_{i + (c-1)n}^{(nm)} - \prod_{c = 1}^m \big(K_{i + (c-1)n}^{(nm)}\big)^{-1}\bigg) \\
		&= \delta_{ij} \frac{\widetilde{K}_i - \inv{\widetilde{K}_i}}{q- \qinv}.
	\end{align*}
	In the second equality we use the commutators $[E_i^{(nm)}, F_j^{(nm)}] = 0$ for $i \neq j$.

	Finally, we verify the Serre relations. Recall \ref{not and conv} and let $a_{ij}$ denote the entries of the Cartan matrix for $\gln$, as in \Cref{cartan mats}. For any $1 \leq i, j \leq n-1$, the inner product $\langle \alpha_{i+(a-1)n}, \alpha_{j+(b-1)n} \rangle$ vanishes
	whenever $a \neq b$, since then $|(i + (a-1)n) - (j + (b-1)n)| \geq 2$. Therefore, 
	$$[E_{i + (a-1)n}^{(nm)}, E_{j + (b-1)n}^{(nm)}] = 0$$
	for any $1 \leq i, j \leq n-1$ whenever $a \neq b$. In other words, simple positive root vectors $E_{i + (a-1)n}^{(nm)}$ and $E_{j + (b-1)n}^{(nm)}$ acting on different columns always commute. First suppose $|i - j| > 1$. In this case $[E_{i + (a-1)n}^{(nm)}, E_{j + (a-1)n}^{(nm)}] = 0$ for each $a = 1, \ldots, m$, so 
	\begin{align*}
		[\widetilde{E}_i, \widetilde{E}_j] &= \sum_{a, b} q^{-\langle \alpha_{i + (a-1)n}, \sum_{c > b} \alpha_{j + (c-1)n} \rangle} \Lambda_{j, >b}\Lambda_{i, >a}  E_{i + (a-1)n}^{(nm)} E_{j + (b-1)n}^{(nm)} \\
		&\quad - \sum_{a, b} q^{-\langle \alpha_{j + (b-1)n}, \sum_{c > a} \alpha_{i + (c-1)n} \rangle} \Lambda_{j, >b}\Lambda_{i, >a} E_{j + (b-1)n}^{(nm)} E_{i + (a-1)n}^{(nm)} \\
		&= \sum_a \Lambda_{j, >a}\Lambda_{i, >a} [E_{i + (a-1)n}^{(nm)}, E_{j + (a-1)n}^{(nm)}] \\
		&= 0.
	\end{align*}

	Now suppose $j = i+1$. With \Cref{alt uq Serre rels} in mind, we first compute
	\begin{align*}
		[\widetilde{E}_i, \widetilde{E}_{j}]_q 
		&= \sum_{a < b} \big(1 - q^{1 - a_{ji}}\big) \Lambda_{j, >b}\Lambda_{i, >a} E_{i + (a-1)n}^{(nm)} E_{j + (b-1)n}^{(nm)} \\
		&\quad + \sum_a \Lambda_{j, >a}\Lambda_{i, >a} [E_{i + (a-1)n}^{(nm)}, E_{j + (a-1)n}^{(nm)}]_q \\
		&= (1 - q^2)\sum_{a < b} \Lambda_{j, >b}\Lambda_{i, >a} E_{i + (a-1)n}^{(nm)} E_{j + (b-1)n}^{(nm)} \\
		&\quad + \sum_a \Lambda_{j, >a}\Lambda_{i, >a} [E_{i + (a-1)n}^{(nm)}, E_{j + (a-1)n}^{(nm)}]_q. \\
		&= (1 - q^2) I + II.
	\end{align*}
	Next, we calculate
	\begin{align*}
		[\widetilde{E}_i, II]_{\qinv} &= \sum_{a, b} q^{- \langle \alpha_{i + (a-1)n}, \sum_{c > b} \alpha_{i + (c-1)n} + \alpha_{j + (c-1)n} \rangle} \Lambda_{j, >b}\Lambda_{i, >b} \Lambda_{i, >a} \\
		&\quad \cdot E_{i + (a-1)n}^{(nm)} [E_{i + (b-1)n}^{(nm)}, E_{j + (b-1)n}^{(nm)}]_q \\
		&\quad - \qinv \sum_{a, b} q^{-\langle \alpha_{i + (b-1)n} + \alpha_{j + (b-1)n}, \sum_{c > a} \alpha_{i + (c-1)n} \rangle} \Lambda_{j, >b}\Lambda_{i, >b} \Lambda_{i, >a} \\
		&\quad \cdot [E_{i + (b-1)n}^{(nm)}, E_{j + (b-1)n}^{(nm)}]_q E_{i + (a-1)n}^{(nm)} \\
		&= \sum_{a < b} \big(1 - q^{-1 - a_{ii} - a_{ji}} \big)\Lambda_{j, >b}\Lambda_{i, >b} \Lambda_{i, >a} E_{i + (a-1)n}^{(nm)} [E_{i + (b-1)n}^{(nm)}, E_{j + (b-1)n}^{(nm)}]_q \\
		&\quad + \sum_a \Lambda_{j, >a}\Lambda_{i, >a}^2 [E_{i + (a-1)n}^{(nm)}, [E_{i + (b-1)n}^{(nm)}, E_{j + (b-1)n}^{(nm)}]_q]_{\qinv} \\
		&= (1 - q^{-2})\sum_{a < b} \Lambda_{j, >b}\Lambda_{i, >b} \Lambda_{i, >a} E_{i + (a-1)n}^{(nm)} [E_{i + (b-1)n}^{(nm)}, E_{j + (b-1)n}^{(nm)}]_q 
		\\
		&= (1 - q^{-2}) III.
	\end{align*}
	In the second equality we used the $q$-Serre relations defining $\Uqglnm$, which guarantee that $[E_{i + (b-1)n}^{(nm)}, E_{j + (b-1)n}^{(nm)}]_q]_{\qinv} = 0$. Continuing, we see that
	\begin{align*}
		[\widetilde{E}_i, I]_{\qinv} &= \sum_{c, a< b} q^{- \langle \alpha_{i + (c-1)n}, \sum_{s > a} \alpha_{i + (s-1)n} + \sum_{s > b} \alpha_{j + (s-1)n} \rangle} \Lambda_{j, >b} \Lambda_{i, >a} \Lambda_{i, c^+} \\
		&\quad \cdot E_{i + (c-1)n}^{(nm)} E_{i + (a-1)n}^{(nm)} E_{j + (b-1)n}^{(nm)} \\
		&\quad  - \qinv \sum_{c, a < b} q^{-\langle \alpha_{i + (a-1)n} + \alpha_{j + (b-1)n}, \sum_{s > a} \alpha_{i + (s-1)n} \rangle} \\
		&\quad \cdot E_{i + (a-1)n}^{(nm)} E_{j + (b-1)n}^{(nm)} E_{i + (c-1)n}^{(nm)} \\
		&= \sum_{c < a < b} \big(1 - q^{-1 - a_{ii} - a_{ji}}\big) \Lambda_{j, >b} \Lambda_{i, >a} \Lambda_{i, c^+} E_{i + (c-1)n}^{(nm)} E_{i + (a-1)n}^{(nm)} E_{j + (b-1)n}^{(nm)} \\
		&\quad + \sum_{a < c < b} \big(q^{-a_{ii}} - q^{-1 - a_{ji}}\big) \Lambda_{j, >b} \Lambda_{i, c^+} \Lambda_{i, >a}  E_{i + (a-1)n}^{(nm)} E_{i + (c-1)n}^{(nm)} E_{j + (b-1)n}^{(nm)} \\
		&\quad + \sum_{a < b < c} \big(q^{-a_{ii} - a_{ij}} - \qinv\big) \Lambda_{j, >b} \Lambda_{i, >a} \Lambda_{i, c^+} E_{i + (c-1)n}^{(nm)} E_{i + (a-1)n}^{(nm)} E_{j + (b-1)n}^{(nm)} \\
		&\quad + \sum_{c = a < b} \big(1 - q^{-1 - a_{ji}}\big) \Lambda_{j, >b} \Lambda_{i, >a}^2 (E_{i + (a-1)n}^{(nm)})^2 E_{j + (b-1)n}^{(nm)} \\
		&\quad + \sum_{a < b = c} \Lambda_{j, >b} \Lambda_{i, >a} \Lambda_{i, >b} \big(q^{-a_{ii}} E_{i + (a-1)n}^{(nm)} E_{i + (b-1)n}^{(nm)} E_{j + (b-1)n}^{(nm)} \\
		&\qquad \qquad - \qinv E_{i + (a-1)n}^{(nm)} E_{j + (b-1)n}^{(nm)}E_{i + (b-1)n}^{(nm)}\big).
	\end{align*}
	In the second equality, the second summation is the negative of the first so their sum vanishes; each term in the third and fourth summations is trivial so they too vanish. The last summation may be re-written using $q$-commutators:
	\begin{align*}
		\sum_{a < b} \Lambda_{j, >b} &\Lambda_{i, >a} \Lambda_{i, >b} \big(q^{-a_{ii}} E_{i + (a-1)n}^{(nm)} E_{i + (b-1)n}^{(nm)} E_{j + (b-1)n}^{(nm)} - \qinv E_{i + (a-1)n}^{(nm)} E_{j + (b-1)n}^{(nm)}E_{i + (b-1)n}^{(nm)}\big) \\
		&= q^{-2}\sum_{a < b} \Lambda_{j, >b} \Lambda_{i, >a} \Lambda_{i, >b} E_{i + (a-1)n}^{(nm)} [E_{i + (b-1)n}^{(nm)}, E_{j + (b-1)n}^{(nm)}]_q. \\
		&= q^{-2} III.
	\end{align*}
	Hence, $[\widetilde{E}_i, I]_{\qinv} = q^{-2} III$. Combining results, we conclude that
	\begin{align*}
		[\widetilde{E}_i, [\widetilde{E}_i, \widetilde{E}_j]_q]_{\qinv} 
		= (1 - q^2)[\widetilde{E}_i, I]_{\qinv} + [\widetilde{E}_i, II]_{\qinv} 
		= 0,
	\end{align*}
	as desired. We verify that the $\widetilde{F_i}$ also satisfy the $q$-Serre relations using a similar calculation, which we omit here.
\end{proof}

We obtain an algebra map $\lambda_q\colon \Uqgln \to Cl_q(nm)$ by composing $\Theta$ with $\Phi_{q, nm}$, as in \Cref{uqgln embedding into clqnm diag}.

\begin{prop}\label[prop]{uqgln clqnm embedding}
	Define
	$$\kappa_{i, <j} = \prod_{p < j} \omega_{i + (p-1)n}^{-1} \omega_{i+1 + (p-1)n} \quad \text{and} \quad \kappa_{i, > j} = \prod_{p > j} \omega_{i + (p-1)n}^{-1} \omega_{i+1 + (p-1)n},$$
	by taking an appropriate product of $\omega_a$ generators in the $i$th and $(i+1)$st rows to the left, and respectively to the right, of the $j$th column.

	There is an algebra homomorphism $\lambda_q\colon \Uqgln \to Cl_q(nm)$ satisfying
	\begin{align*}
		E_i^{(n)} &\to \qinv \sum_{j=1}^m \omega_{i + (j-1)n}^{-1} \psi^\dagger_{i + (j-1)n} \psi_{i+1 + (j-1)n} \kappa_{i, >j}, \\
		F_i^{(n)} &\to \sum_{j=1}^m \omega_{i + (j-1)n} \kappa_{i, <j}^{-1} \psi^\dagger_{i+1 + (j-1)n} \psi_{i + (j-1)n}^{\pdg}, \text{ and} \\
		L_i^{(n)} &\to \prod_{j=1}^m \omega_{i + (j-1)n}^{-1}.
	\end{align*}
	We use superscripts on $\Uqgln$ generators as in \Cref{generator superscript}.
\end{prop}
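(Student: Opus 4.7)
The plan is to define $\lambda_q$ as the composition $\Phi_{q, nm} \circ \Theta$ depicted in \Cref{uqgln embedding into clqnm diag}, and then verify that this composition matches the formulas stated in the proposition. Since $\Theta\colon \Uqgln \to \Uqglnm$ of \Cref{uqgln uqglnm embedding} and $\Phi_{q, nm}\colon \Uqglnm \to Cl_q(nm)$ of \Cref{uqgln clqn embedding} are both algebra homomorphisms, their composition is automatically one, so no relations need to be re-verified. The only remaining work is to chase each generator of $\Uqgln$ through the composition and identify the result with the claimed expression in $Cl_q(nm)$.

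I would proceed generator by generator. For $E_i^{(n)}$, the map $\Theta$ produces $\sum_{j=1}^m E_{i+(j-1)n}^{(nm)} \Lambda_{i, >j}$. Using $\Phi_{q, nm}(E_k^{(nm)}) = \qinv \omega_k^{-1} \psi_k^\dagger \psi_{k+1}^{\pdg}$ together with the identity $\Phi_{q, nm}(K_k^{(nm)}) = \omega_k^{-1} \omega_{k+1}$ coming from $K_k^{(nm)} = L_k^{(nm)}(L_{k+1}^{(nm)})^{-1}$, the image of $\Lambda_{i, >j}$ becomes precisely $\kappa_{i, >j}$, which yields the stated formula for $\lambda_q(E_i^{(n)})$. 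A parallel calculation for $F_i^{(n)}$ starts from $\Theta(F_i^{(n)}) = \sum_{j=1}^m \Lambda_{i, <j}^{-1} F_{i+(j-1)n}^{(nm)}$ and produces $\sum_{j=1}^m \kappa_{i, <j}^{-1} \omega_{i+(j-1)n} \psi_{i+1+(j-1)n}^\dagger \psi_{i+(j-1)n}^{\pdg}$; to reconcile with the order in the statement I would use that all $\omega$ generators commute pairwise in $Cl_q(nm)$, so $\omega_{i+(j-1)n}$ can be moved past the $\kappa_{i, <j}^{-1}$ factor without any sign or $q$-power correction. Finally, the image $\prod_{j=1}^m L_{i+(j-1)n}^{(nm)}$ of $L_i^{(n)}$ under $\Theta$ maps directly under $\Phi_{q, nm}$ to $\prod_{j=1}^m \omega_{i+(j-1)n}^{-1}$.

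There is no substantive obstacle here because the genuine content, in particular the verification of the $q$-Serre relations for $\Theta$, has already been absorbed by \Cref{uqgln uqglnm embedding}, and the $q$-Clifford relations by \Cref{uqgln clqn embedding}. The only delicate step is the bookkeeping of the $q$-exponents and the positions of the $\omega$ factors as one pushes the output of $\Theta$ through $\Phi_{q, nm}$, which is a purely mechanical check.
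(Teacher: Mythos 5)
Your proposal is correct and matches the paper's own proof, which likewise defines $\lambda_q$ as the composition $\Phi_{q,nm}\circ\Theta$ and cites \Cref{uqgln clqn embedding} and \Cref{uqgln uqglnm embedding}; your generator-by-generator chase (including the observation that $\Phi_{q,nm}(K_k^{(nm)})=\omega_k^{-1}\omega_{k+1}$ sends $\Lambda_{i,>j}$ to $\kappa_{i,>j}$, and that the commuting $\omega$'s let you reorder the $F_i^{(n)}$ image) just makes explicit the bookkeeping the paper leaves implicit.
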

\begin{rmk}
	Notice that when $m = 1$, $\lambda_q$ is exactly the homomorphism $\Phi_{q, n}$ of \Cref{uqgln clqn embedding}.
\end{rmk}
\begin{proof}
	The map $\lambda_q$ is simply the composition $\Phi_{q, n} \circ \Theta$, so the statement follows from \Cref{uqgln clqn embedding} and \Cref{uqgln uqglnm embedding}.
\end{proof}

The map $\Theta\colon \Uqgln \to \Uqglnm$ of \Cref{uqgln uqglnm embedding} equips the $\Uqglnm$-module $\bigwedge_q(V^{(nm)})$ with a $\Uqgln$-module structure. Composing the map $\Delta^{(m-1)} \circ \Phi_{q, n}$ with the isomorphism $\Gamma_q\colon Cl_q(n)^{\otimes m} \to Cl_q(nm)$ gives $\bigwedge_q(V^{(nm)})$ another $\Uqglnm$-module structure. The next proposition proves the two actions coincide.

\begin{prop}\label[prop]{isom tensor power braided ext alg}
	\Cref{uqgln embedding into clqnm diag} commutes. In particular, there is an isomorphism of $\Uqgln$-modules
	\begin{align*}
		\textstyle \bigwedge_q(V^{(nm)}) \cong \bigwedge_q(\Vn)^{\otimes m}.
	\end{align*}
\end{prop}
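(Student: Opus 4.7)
The plan is to verify the commutativity of \Cref{uqgln embedding into clqnm diag} by direct computation on the generators $E_i^{(n)}, F_i^{(n)}, L_i^{(n)}$ of $\Uqgln$, and then read off the $\Uqgln$-module isomorphism as an immediate consequence. The $\Uqgln$-module structure on $\bigwedge_q(V^{(n)})^{\otimes m}$ is by construction obtained from the tensor-product action of $\Uqgln$ on $\bigwedge_q(V^{(n)})^{\otimes m}$ via iterated comultiplication, which by \Cref{uqgln action factors through clqn} is exactly the top route of the diagram: $\Uqgln \xrightarrow{\Delta^{(m-1)}} \Uqgln^{\otimes m} \xrightarrow{\Phi_{q,n}^{\otimes m}} Cl_q(n)^{\otimes m} \xrightarrow{\Gamma_q} Cl_q(nm) \to \End(\bigwedge_q(V^{(nm)}))$, after identifying the underlying vector space $\bigwedge_q(V^{(n)})^{\otimes m}$ with $\bigwedge_q(V^{(nm)})$ via the column-major ordering \eqref{vk basis}. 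The bottom route is by definition the pullback of the $\Uqglnm$-action through $\Theta$, so showing that both routes agree on generators yields the claimed isomorphism of $\Uqgln$-modules.

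First I would expand $\Delta^{(m-1)}(E_i^{(n)})$ using the comultiplication convention \eqref{delta convention}: iterating $\Delta(E_i) = E_i \otimes K_i + 1 \otimes E_i$ produces a sum over $j = 1, \ldots, m$ of tensors with $E_i$ in the $j$th slot, $K_i$ in every slot strictly to the right of $j$, and $1$ in every slot strictly to the left. Applying $\Phi_{q,n}^{\otimes m}$ places $\qinv \omega_i^{-1} \psi_i^\dagger \psi_{i+1}^{\pdg}$ in the $j$th slot and $\Phi_{q,n}(K_i) = \omega_i^{-1} \omega_{i+1}$ in every slot to the right. The isomorphism $\Gamma_q$ then reindexes the $k$th generator of the $j$th tensor factor of $Cl_q(n)^{\otimes m}$ as the corresponding generator of $Cl_q(nm)$ with index shifted by $(j-1)n$, producing
\[
\qinv \sum_{j=1}^m \omega_{i + (j-1)n}^{-1} \psi^\dagger_{i + (j-1)n} \psi_{i+1 + (j-1)n}^{\pdg}\, \kappa_{i, >j},
\]
which is precisely $\lambda_q(E_i^{(n)})$ as stated in \Cref{uqgln clqnm embedding}. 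Since $\lambda_q = \Phi_{q,nm} \circ \Theta$ by construction, the two routes agree on $E_i^{(n)}$.

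An analogous computation handles $F_i^{(n)}$: the comultiplication $\Delta(F_i) = F_i \otimes 1 + K_i^{-1} \otimes F_i$ iterates to place $F_i$ in the $j$th slot and $K_i^{-1}$ strictly to its left, and composition with $\Phi_{q,n}^{\otimes m}$ and $\Gamma_q$ yields $\lambda_q(F_i^{(n)})$. The case of $L_i^{(n)}$ is the simplest because $L_i$ is group-like: iterated comultiplication gives $L_i$ in every slot, and $\Gamma_q\circ \Phi_{q,n}^{\otimes m}$ produces $\prod_{j=1}^m \omega_{i+(j-1)n}^{-1} = \lambda_q(L_i^{(n)})$.

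The main obstacle is purely bookkeeping: correctly tracking how the $K_i$ factors introduced by iterated comultiplication interact with the image of $\Phi_{q,n}$ and how $\Gamma_q$ reindexes the resulting operator across tensor factors of $Cl_q(n)^{\otimes m}$. However, the factors $\Lambda_{i,>j}$ in \Cref{uqgln uqglnm embedding} and $\kappa_{i,>j}$ in \Cref{uqgln clqnm embedding} were defined precisely to match these contributions under the single substitution $\Phi_{q,n}(K_i) = \omega_i^{-1}\omega_{i+1}$, so the verification reduces to reading off coefficients. Once the commutativity of \Cref{uqgln embedding into clqnm diag} is established on all generators, the identification of the two $\Uqgln$-actions on $\bigwedge_q(V^{(nm)})$ and $\bigwedge_q(V^{(n)})^{\otimes m}$ is immediate, giving the claimed isomorphism of $\Uqgln$-modules.
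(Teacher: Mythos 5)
Your proposal is correct and follows essentially the same route as the paper, which likewise reduces the claim to checking $\lambda_q = \Gamma_q \circ \Phi_{q,n}^{\otimes m} \circ \Delta^{(m-1)}$ on generators and then invokes the compatibility of $\Gamma_q$ with the two Clifford module structures. The only step you treat as automatic — that the column-major identification of $\bigwedge_q(\Vn)^{\otimes m}$ with $\bigwedge_q(V^{(nm)})$ intertwines the $Cl_q(n)^{\otimes m}$- and $Cl_q(nm)$-actions (the right-hand square of \Cref{uqgln embedding into clqnm diag}) — is exactly what the paper delegates to the companion result on quantum Clifford algebras, and it does merit a citation or a check because of the $\omega$-twists appearing in $\Gamma_q$.
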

\begin{proof}
	Using the definitions, it is straightforward to check that $\lambda_q = \Gamma_q \circ \Delta^{(m-1)} \circ \Phi_{q, n}$. Combining with the commutativity of \Cref{braided ext alg isom as qcl modules} proves the claim.
\end{proof}
\begin{rmk}
	The isomorphism of $\Uqgln$-modules can be promoted to an isomorphism of $\Uqgln$-\textit{module algebras} if we deform the multiplication in $\bigwedge_q(\Vn)^{\otimes m}$ as in Theorem~2.3 of \cite{lzz_2010}.
\end{rmk}

We now turn to studying the commutant of the $\Uqgln$-action on $\bigwedge_q(V^{(nm)})$. We begin by defining a $\Uqglm$-module structure on $\bigwedge_q(V^{(nm)})$ that factors through $Cl_q(nm)$ in \Cref{uqglm clqnm embedding}. Our skew Howe duality \Cref{uqgln uqglm duality} proves that this action generates the centralizer algebra $\End_{\Uqgln}\left(\bigwedge_q(V^{(nm)})\right)$.

Unlike its classical counterpart $\rho$ of \Cref{glm embedding into clnm}, the map $\rho_q$ does \textit{not} factor through $\Uqglnm$. That is, there is no quantum analogue of \Cref{glm embedding into glnm}. This has to do with our choice of weight basis; see \Cref{column major order means glm action does not factor}. Two issues arise in attempting to quantize \Cref{glm embedding into glnm}. First, the classical $\glm$ generators $E_j^{(m)}$ map to non-simple $\glnm$ root vectors, implemented by highly nested commutators of $E_a^{(nm)}$ generators. Although root vectors corresponding to non-simple roots have a quantum analogue in terms of $q$-commutators as defined in Section~7.3.1 of \cite{klimyk_schmudgen_1997}, the resulting expressions are rather complicated. Second, the commutation relations amongst images of $\glm$ generators rely on the Jacobi identity, which can be quantized in many different ways.

Notwithstanding, our map $\rho_q\colon \Uqglm \to Cl_q(nm)$ makes the following diagram commute:
\begin{equation}\label[diag]{uqglm embedding into clqnm diag}
	\begin{tikzcd}[column sep={4.5cm,between origins}, row sep={1.8cm,between origins}]
 		&
 		& Cl_q(nm) \\
 		\Uqglm \ar[r, swap, "\widetilde{\Delta}^{(n-1)}"] 
 		\ar[urr, dashed, blue!60, "\rho_q", bend left=13]
 		& \Uqglm^{\otimes n} \ar[r, swap, "\Phi_{q, m}^{\otimes n}"]
 		& Cl_q(m)^{\otimes n} \ar[u, swap, "\, \Gamma_q"]
 	\end{tikzcd}
\end{equation}
Note that \Cref{uqglm embedding into clqnm diag} quantizes part of \Cref{gln glm embeddings factor through glnm cd}.
\begin{rmk}
	In this \chorpaper\ the choice of comultiplication is important when discussing commuting quantum group actions. In order to ensure that the $\Uqglm$-action on $\bigwedge_q(V^{(nm)})$ commutes with the $\Uqgln$-action defined by $\lambda_q$, we must use the comultiplication $\widetilde{\Delta}\colon \Uqglm \to \Uqglm^{\otimes 2}$ satisfying
	\begin{align}\label{backward comult}
		\begin{split}
			\widetilde{\Delta}(E_i) &= E_i \otimes 1 + K_i \otimes E_i \\
			\widetilde{\Delta}(F_i) &= F_i \otimes \kinv + 1 \otimes F_i \\
			\widetilde{\Delta}(L_i) &= L_i \otimes L_i.
		\end{split}
	\end{align}
	This comultiplication \textit{does not} agree with that  defined by $\Delta$ as in \Cref{delta convention}. In $\widetilde{\Delta}(E_i)$, for instance, $K_i$ appears in the \textit{first} tensor factor. It is also possible to swap the comultiplications for $\Uqgln$ and $\Uqglm$. While choosing the same convention for both factors leads to well-defined embeddings into $Cl_q(nm)$, the resulting actions on $\bigwedge_q(V^{(nm)})$ do \textit{not} commute. In the setting of orthogonal algebras discussed in \crossrefsonch, the formulas embedding a commuting factor of $\Uqprime$ into $\Uqglm$ depend on a choice of comultiplication for $\Uqson$.
\end{rmk}
\begin{prop}\label[prop]{uqglm clqnm embedding}
	Define
	$$\kappa_{<i, j} = \prod_{p < i} \omega_{p + (j-1)n}^{-1} \omega_{p + jn} \quad \text{and} \quad \kappa_{> i, j} = \prod_{p > i} \omega_{p + (j-1)n}^{-1} \omega_{p + jn}$$
	by taking an appropriate product of $\omega_a$ generators in the $j$th and $(j+1)$st columns above, and respectively below, of the $i$th row.

	There is an algebra homomorphism $\rho_q\colon \Uqglm \to Cl_q(nm)$ mapping
	\begin{align*}
		E_j^{(m)} &\to \sum_{i=1}^n \kappa_{<i, j} \psi^{\dagger}_{i + (j-1)n} \psi_{i + jn}^{\pdg}, \\
		F_j^{(m)} &\to \sum_{i=1}^n \psi^{\dagger}_{i + jn} \psi_{i + (j-1)n}^{\pdg}\kappa_{>i, j}^{-1}, \text{ and} \\
		L_j^{(m)} &\to \prod_{i=1}^n \omega_{i + (j-1)n}^{-1}.
	\end{align*}
	As usual, we use superscript indices as in \Cref{generator superscript}. 
\end{prop}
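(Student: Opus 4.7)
The plan is to obtain $\rho_q$ as the composition $\Gamma_q \circ \Phi_{q,m}^{\otimes n} \circ \widetilde{\Delta}^{(n-1)}$ prescribed by diagram \eqref{uqglm embedding into clqnm diag}. Each factor is an algebra homomorphism: $\widetilde{\Delta}^{(n-1)}$ is the iterated opposite comultiplication \eqref{backward comult} (coassociativity ensures it is well-defined and an algebra map); $\Phi_{q,m}^{\otimes n}$ is a tensor power of the algebra homomorphism of \Cref{uqgln clqn embedding}; and $\Gamma_q$ is the analogue of the isomorphism of \Cref{clqn clqnm embedding} used for $\lambda_q$, but using row-major rather than column-major ordering of the $n\times m$ grid, so that the $j$-th generator inside the $i$-th tensor slot of $Cl_q(m)^{\otimes n}$ is sent to the generator with global index $i+(j-1)n$ inside $Cl_q(nm)$. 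Hence the composition is automatically a unital algebra homomorphism, and only the explicit formulas for $\rho_q(E_j^{(m)})$, $\rho_q(F_j^{(m)})$, and $\rho_q(L_j^{(m)})$ remain to be checked.

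Verifying the formulas is a symbolic computation. First I would iterate \eqref{backward comult} to obtain
\begin{equation*}
    \widetilde{\Delta}^{(n-1)}(E_j^{(m)}) = \sum_{i=1}^n \big(K_j^{(m)}\big)^{\otimes (i-1)} \otimes E_j^{(m)} \otimes 1^{\otimes(n-i)},
\end{equation*}
and analogously $\widetilde{\Delta}^{(n-1)}(F_j^{(m)}) = \sum_{i=1}^n 1^{\otimes(i-1)} \otimes F_j^{(m)} \otimes (K_j^{-1})^{\otimes(n-i)}$ and $\widetilde{\Delta}^{(n-1)}(L_j^{(m)}) = (L_j^{(m)})^{\otimes n}$. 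Applying $\Phi_{q,m}^{\otimes n}$ converts $K_j \mapsto \omega_j^{-1}\omega_{j+1}$, $E_j \mapsto \qinv \omega_j^{-1}\psi_j^\dagger\psi_{j+1}$, $F_j \mapsto \omega_j\psi_{j+1}^\dagger \psi_j$, and $L_j \mapsto \omega_j^{-1}$, by \Cref{uqgln clqn embedding}. Routing the output through $\Gamma_q$ identifies each $\omega_p$ or $\psi_p^\dagger$ in slot $i$ with the corresponding generator carrying the global index $i + (p-1)n$. The accumulation of the $\Phi_{q,m}(K_j)$ factors in slots $k<i$ assembles into $\kappa_{<i,j}$, proving the formula for $\rho_q(E_j^{(m)})$; the dual correction $\kappa_{>i,j}^{-1}$ arises for $\rho_q(F_j^{(m)})$ because the $K_j^{-1}$ factors in $\widetilde{\Delta}(F_j)$ sit in the tensor factors to the right of $F_j$. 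The computation for $\rho_q(L_j^{(m)})$ is immediate.

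The main technical obstacle is the bookkeeping in $\Gamma_q$. Because the $Cl_q(m)^{\otimes n}$ tensor product must be transported consistently with the canonical anticommutation relations inside $Cl_q(nm)$, the row-major embedding requires a $\mathbb{Z}_2$-graded refinement that recodes potential sign swaps between odd generators living in different tensor slots. The claim is that these corrections absorb cleanly into products of $\omega$-generators, leaving only the $\kappa_{<i,j}$ and $\kappa_{>i,j}^{-1}$ prefactors appearing in the statement. Should the composition-level bookkeeping prove unwieldy, a complete fallback is to verify directly that the proposed images satisfy the defining relations of \Cref{uqgln def}, following the style of calculation performed in the proof of \Cref{uqgln uqglnm embedding} and using the $Cl_q(nm)$ identities established earlier in the work; this is more laborious but strictly mechanical.
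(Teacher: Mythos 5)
Your fallback is the paper's actual proof: the proposition is established by checking directly that the proposed images $\widetilde{E}_j, \widetilde{F}_j, \widetilde{K}_j$ satisfy the defining relations of $\Uqglm$ --- the commutation with $\widetilde{L}_j$, the relation $[\widetilde{E}_j,\widetilde{F}_\ell]=\delta_{j\ell}(\widetilde{K}_j-\widetilde{K}_j^{-1})/(q-\qinv)$, and the quantum Serre relations --- in the style of the proof of \Cref{uqgln uqglnm embedding}. Since you do not carry that computation out, everything rests on your primary route, and that route has a genuine gap exactly where you write that the graded corrections ``absorb cleanly.''

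Two concrete problems. First, the row-major map $\Gamma_q\colon Cl_q(m)^{\otimes n}\to Cl_q(nm)$ you invoke is not the map the paper has available: the isomorphism used for $\lambda_q$ is the column-major one, $Cl_q(n)^{\otimes m}\to Cl_q(nm)$, whose $\omega$-corrections run over \emph{contiguous} blocks of global indices. In the row-major ordering the generators belonging to one tensor slot are interleaved with those of all the others, so reconciling the ordinary tensor product with the $q$-deformed anticommutation relations of $Cl_q(nm)$ requires corrections at non-contiguous indices; that such a map exists and is an algebra homomorphism is precisely the kind of statement that must be proved, not assumed, and it is closely tied to the paper's warning that the $\Uqglm$-module isomorphism $\bigwedge_q(V^{(nm)})\cong\bigwedge_q(V^{(m)})^{\otimes n}$ fails for this choice of basis. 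Second, even granting such a $\Gamma_q$, the naive transport does not reproduce the stated formulas: since $\Phi_{q,m}(E_j)=\qinv\omega_j^{-1}\psi_j^\dagger\psi_{j+1}^{\pdg}$, slot $i$ contributes $\qinv\,\omega_{i+(j-1)n}^{-1}\psi^\dagger_{i+(j-1)n}\psi_{i+jn}^{\pdg}$ multiplied by the $\kappa_{<i,j}$ prefactor, which carries an extra $\qinv\,\omega_{i+(j-1)n}^{-1}$ absent from the asserted image $\kappa_{<i,j}\psi^\dagger_{i+(j-1)n}\psi^{\pdg}_{i+jn}$ (and likewise an extra $\omega_{i+(j-1)n}$ for $F_j^{(m)}$). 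Whether the corrections of your hypothetical $\Gamma_q$ cancel exactly these factors is the entire content of the proposition; asserting that they do begs the question. You should either construct the row-major $\Gamma_q$ explicitly and track the $\omega$'s through the composition, or execute the fallback relation check.
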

\begin{proof}
	It suffices to show that the images $\widetilde{E}_j, \widetilde{F}_j$, and $\widetilde{K}_j$ of $E_j^{(m)}, F_j^{(m)}$, and $K_j^{(m)}$ under $\rho_q$ satisfy the relations defining $\Uqglm$. The following calculations are very similar to those proving \Cref{uqgln uqglnm embedding}. For starters, notice that
	\begin{align*}
		\tilde{L}_j \widetilde{E}_\ell \tilde{L}_j^{-1} 
		&= \sum_a q^{\delta_{j \ell}} \kappa_{<i, j} \psi_{a + (\ell - 1)n}^\dagger \psi_{a + \ell n}
		= q^{\delta_{j \ell}} \widetilde{E}_j 
	\end{align*}
	Next we verify that
	$$[\widetilde{E}_j, \widetilde{F}_\ell] = \delta_{j \ell} {\widetilde{K}_j - \widetilde{K}_j^{-1} \over q - \qinv}$$
	by summing over different summation index regimes independently. Notice that when $a < b$, the product $\kappa_{>b, \ell}$ commutes with $\psi_{a + (j-1)n}^\dagger \psi_{a + jn}$. Similarly, $\kappa_{<a, j}$ commutes with $\psi_{b + \ell n}^\dagger \psi_{b + (\ell - 1)n}$ when $a < b$. Therefore, the double sum
	\begin{align*}
		[\widetilde{E}_j, \widetilde{F}_\ell] &= \sum_{a, b} \kappa_{<a, j} \psi_{a + (j-1)n}^\dagger \psi_{a + jn}^{\pdg}
		 \kappa_{>b, \ell}^{-1} \psi_{b + \ell n}^\dagger \psi_{b + (\ell - 1)n}^{\pdg} \\
		&\quad - \sum_{a, b} \kappa_{>b, \ell}^{-1} \psi_{b + \ell n}^\dagger \psi_{b + (\ell - 1)n}^{\pdg}
		 \kappa_{<a, j}\psi_{a + (j-1)n}^\dagger \psi_{a + jn}^{\pdg}
	\end{align*}
	vanishes in the regime $a < b$. Hence
	\begin{align*}
		[\widetilde{E}_j, \widetilde{F}_\ell] &= \sum_{a > b} \big(q^{a_{\ell j}} - q^{a_{j \ell}} \big)
		\kappa_{<a, j}^{\phantom{-1}} \kappa_{>b, \ell}^{-1} \, \psi_{a + (j-1)n}^\dagger \psi_{a + jn}^{\pdg}  \psi_{b + \ell n}^\dagger \psi_{b + (\ell - 1)n}^{\pdg} \\
		&\quad + \sum_a \kappa_{<a, j} \kappa_{>a, \ell}^{-1} \, [\psi_{a + (j-1)n}^\dagger \psi_{a + jn}^{\pdg}, \psi_{a + \ell n}^\dagger \psi_{a + (\ell - 1)n}^{\pdg}] \\
		&= \frac{\delta_{j \ell}}{q - \qinv} \sum_a \kappa_{<a, j} \kappa_{>a, \ell}^{-1} \, \big( (\omega_{a + (j-1)n}^{-1} \omega_{a + jn}) - (\omega_{a + (j-1)n}^{-1} \omega_{a + jn})^{-1}\big) \\
		&= \frac{\delta_{j \ell}}{q - \qinv} \bigg(\prod_{i \leq n} \omega_{i + (j-1)n}^{-1} \omega_{i + jn} - \prod_{i \geq 1} \big(\omega_{i + (j-1)n}^{-1} \omega_{i + jn}\big)^{-1}\bigg) \\
		&= \delta_{j \ell} \frac{\widetilde{K}_j - \widetilde{K}_j^{-1}}{q - \qinv}.
	\end{align*}
	Note that the sum in the second equality is telescoping.

	Finally, we check that $\widetilde{E}_j$ and $\widetilde{F}_j$ satisfy the Serre relations. First suppose that $|j - \ell| > 1$. In this case,
	\begin{align*}
		[\widetilde{E}_j, \widetilde{E}_\ell] &= \sum_{a < b} \big(q^{-a_{\ell j}} - 1 \big)
			\kappa_{<a, j}\kappa_{<b, \ell} \, \psi_{a + (j-1)n}^\dagger \psi_{a + jn}^{\pdg} \psi_{b + (\ell - 1)n}^\dagger \psi_{b + \ell n}^{\pdg}\\
			&\quad + \sum_{a > b} \big(1 - q^{-a_{j \ell}}\big)
			\kappa_{<a, j}\kappa_{<b, \ell} \, \psi_{a + (j-1)n}^\dagger \psi_{a + jn}^{\pdg}\psi_{b + (\ell - 1)n}^\dagger \psi_{b + \ell n}^{\pdg} \\
			&\quad + \sum_a \kappa_{<a, j}\kappa_{<a, \ell} \, \psi_{a + (j-1)n}^\dagger \psi_{a + jn}^{\pdg} \psi_{a + (\ell - 1)n}^\dagger \psi_{a + \ell n}^{\pdg} \\
			&\quad - \sum_a \kappa_{<a, j}\kappa_{<a, \ell} \, \psi_{a + (\ell - 1)n}^\dagger \psi_{a + \ell n} \psi_{a + (j-1)n}^\dagger \psi_{a + jn}^{\pdg}\\
			&= 0.
	\end{align*}
	The first and second sums vanish because $a_{j \ell} = 0$ when $|j - \ell| > 1$. The creation and annihilation operators in the last summation may be arranged in the order specified by the third sum with no overall change in sign. This means the fourth sum is the negative of the third.

	Now suppose $\ell = j + 1$, so that $a_{j \ell} = -1$. In this case we compute
	\begin{align*}
		[\widetilde{E}_j, \widetilde{E}_\ell]_q 
		&= \sum_{a > b} \big(1 - q^{1 -a_{\ell j}} \big)
		\kappa_{<a, j}\kappa_{<b, \ell} \, \psi_{a + (j-1)n}^\dagger \psi_{a + jn}^{\pdg} \psi_{b + (\ell - 1)n}^\dagger \psi_{b + \ell n}^{\pdg}\\
		&\quad + \sum_a \kappa_{<a, j}\kappa_{<a, \ell} \, [\psi_{a + (j-1)n}^\dagger \psi_{a + jn}^{\pdg},
		\psi_{a + (\ell - 1)n}^\dagger \psi_{a + \ell n}^{\pdg}]_q \\
		&= (1 - q^2) \sum_{a > b} \kappa_{<a, j}\kappa_{<b, \ell} \,
			\psi_{a + (j-1)n}^\dagger \psi_{a + jn}^{\pdg} \psi_{b + (\ell - 1)n}^\dagger \psi_{b + \ell n}^{\pdg} \\
		&\quad + \sum_a \kappa_{<a, j}\kappa_{<a, \ell} \, \omega_{a+jn}^{-1} \psi_{a+(j-1)n}^\dagger \psi_{a + \ell n}^{\pdg}\\
		&= (1 - q^2) I + II.
	\end{align*}
	In the third equality we used \crossrefcliff{q comm 4 psi} with $\varphi_i = \psi_{a + (j-1)n}^\dagger$ and $\varphi_k = \psi_{a+ \ell n}$.

	Further, observe that
	\begin{align*}
		[\widetilde{E}_j, II]_{\qinv} 
			&= \sum_{a > b} \big(1 - q^{-1-\langle \alpha_j, \varepsilon_j \rangle} \big)
				\kappa_{<a, j} \kappa_{<b, j} \kappa_{<b, \ell} \, \omega_{b + jn}^{-1} \,
				\psi_{a + (j-1)n}^\dagger \psi_{a + jn}^{\pdg} \psi_{b + (j-1)n}^\dagger \psi_{b + \ell n}^{\pdg} \\
			&\quad + q \sum_a \kappa_{<a,j}^2 \kappa_{<a, \ell} \, \omega_{a + jn}^{-1} \,
				\psi_{a + (j-1)n}^\dagger \psi_{a+jn}^{\pdg}\psi_{a + (j-1)n}^\dagger \psi_{a + \ell n}^{\pdg}\\
			&\quad - \qinv \sum_a \kappa_{<a,j}^2 \kappa_{<a, \ell} \, \omega_{a + jn}^{-1} \,
				\psi_{a + (j-1)n}^\dagger \psi_{a + \ell n} \psi_{a + (j-1)n}^\dagger \psi_{a+jn} \\
			&= (1 -  q^{-2}) \sum_{a > b} \kappa_{<a, j} \kappa_{<b, j} \kappa_{<b, \ell} \, \omega_{b + jn}^{-1} \,
				\psi_{a + (j-1)n}^\dagger \psi_{a + jn} \psi_{b + (j-1)n}^\dagger \psi_{b + \ell n} \\
			&\quad - q \sum_a \kappa_{<a,j}^2 \kappa_{<a, \ell} \, \omega_{a + jn}^{-1} \,
				\psi_{a+jn}^{\pdg} \big(\psi_{a + (j-1)n}^\dagger\big)^2 \psi_{a + \ell n}^{\pdg} \\
			&\quad + \qinv \sum_a \kappa_{<a,j}^2 \kappa_{<a, \ell} \, \omega_{a + jn}^{-1} \,
				\psi_{a+\ell n}^{\pdg} \big(\psi_{a + (j-1)n}^\dagger\big)^2 \psi_{a + j n}^{\pdg} \\
			&= (1 -  q^{-2}) \sum_{a > b} \kappa_{<a, j} \kappa_{<b, j} \kappa_{<b, \ell} \, \omega_{b + jn}^{-1} \,
				\psi_{a + (j-1)n}^\dagger \psi_{a + jn}^{\pdg}\psi_{b + (j-1)n}^\dagger \psi_{b + \ell n}^{\pdg}\\
			&= (1 - q^{-2}) III.
	\end{align*}
	In addition, we see that
	\begin{align*}
		[\widetilde{E}_j, I]_{\qinv} &= \sum_{c < b < a} \big(q^{-a_{jj} - a_{\ell j}} - \qinv\big)
				\kappa_{<a, j} \kappa_{<b, \ell} \kappa_{<c, j} \, \\
				&\quad \quad \quad \quad \cdot
				\psi_{c + (j-1)n}^\dagger \psi_{c + jn}^{\pdg}
				\psi_{a + (j-1)n}^\dagger \psi_{a +jn}^{\pdg}
				\psi_{b + (\ell - 1)n}^\dagger \psi_{b + \ell n}^{\pdg} \\
			&\quad + \sum_{b < c < a} \big(q^{-a_{jj}} - q^{-1 - a_{j \ell}}\big)
				\kappa_{<a, j} \kappa_{<b, \ell} \kappa_{<c, j} \, \\
				&\quad \quad \quad \quad \cdot
				\psi_{c + (j-1)n}^\dagger \psi_{c + jn}^{\pdg}
				\psi_{a + (j-1)n}^\dagger \psi_{a +jn}^{\pdg}
				\psi_{b + (\ell - 1)n}^\dagger \psi_{b + \ell n}^{\pdg}\\
			&\quad + \sum_{b < a < c} \big(1 - q^{-1 - a_{j \ell} - a_{jj}}\big)
				\kappa_{<c, j} \kappa_{<b, \ell} \kappa_{<a, j} \, \\
				&\quad \quad \quad \quad \cdot
				\psi_{a + (j-1)n}^\dagger \psi_{a + jn}^{\pdg}
				\psi_{c + (j-1)n}^\dagger \psi_{c +jn}^{\pdg}
				\psi_{b + (\ell - 1)n}^\dagger \psi_{b + \ell n}^{\pdg} \\
			&\quad + \sum_{c = b < a} \kappa_{<a, j} \kappa_{<b, \ell} \kappa_{<b, j} \, \\
				&\quad \quad \quad \quad \cdot \big(q^{-a_{jj}} \,
				\psi_{b + (j-1)n}^\dagger \psi_{b + jn}^{\pdg}
				\psi_{a + (j-1)n}^\dagger \psi_{a + jn}^{\pdg}
				\psi_{b + (\ell - 1)n}^\dagger \psi_{b + \ell n}^{\pdg} \\
				&\quad \quad \quad \quad - \qinv \,
				\psi_{a + (j-1)n}^\dagger \psi_{a + jn}^{\pdg}
				\psi_{b + (\ell - 1)n}^\dagger \psi_{b + \ell n}^{\pdg}
				\psi_{b + (j-1)n}^\dagger \psi_{b + jn}^{\pdg}\big) \\
			&\quad + \sum_{b < a = c} \kappa_{<a, j}^2 \kappa_{<b, \ell}
				\big(\psi_{a+(j-1)n}^\dagger\big)^2 \big(\psi_{a+jn}^{\pdg}\big)^2 \psi_{b+(\ell-1)n}^\dagger \psi_{b+\ell n}^{\pdg}\\
			&\quad - \sum_{b < a = c} q^{-1 - a_{j \ell}} \kappa_{<a, j}^2 \kappa_{<b, \ell}
				\big(\psi_{a+(j-1)n}^\dagger\big)^2 \big(\psi_{a+jn}^{\pdg}\big)^2 \psi_{b+(\ell-1)n}^\dagger \psi_{b+\ell n}^{\pdg} \\
			&= q^{-2} \sum_{a > b} \kappa_{<a, j} \kappa_{<b, \ell} \kappa_{<b, j} \,
				\psi_{a +(j-1)n}^\dagger \psi_{a+jn}^{\pdg}
				[\psi_{b + (j-1)n}^\dagger \psi_{b+jn}^{\pdg}, \psi_{b +(\ell-1)n}^\dagger \psi_{b+\ell n}^{\pdg}]_q \\
			&= q^{-2} \sum_{a > b} \kappa_{<a, j} \kappa_{<b, \ell} \kappa_{<b, j} \, \omega_{b + jn}^{-1} \,
				\psi_{a +(j-1)n}^\dagger \psi_{a+jn}^{\pdg} \psi_{b + (j-1)n}^\dagger \psi_{b + \ell n}^{\pdg} \\
			&= q^{-2} III.
	\end{align*}
	In the third equality we used \crossrefcliff{q comm 4 psi} with $\varphi_i = \psi_{b + (j-1)n}^\dagger$ and $\varphi_k = \psi_{b + \ell n}$. Combining results, and using identity \eqref{alt uq Serre rels}, we conclude that
	\begin{align*}
		[\widetilde{E}_j, [\widetilde{E}_j , \widetilde{E}_\ell]_q]_{\qinv} 
			= (1 - q^2)q^{-2} III + (1-q^{-2})III 
			= 0,
	\end{align*}
	as required. Since $\widetilde{F}_j = \rho_q(E_j)^*$, we may apply the map $*\colon Cl_q(nm) \to Cl_q(nm)$ of \crossrefcliff{star struct} to conclude that the $\widetilde{F}_j$ also satisfy the quantum Serre relations.
\end{proof}

Now we take a moment to interpret the actions of $\Uqgln$ and $\Uqglm$ on $\bigwedge_q(V^{(nm)})$ defined by \Cref{uqgln clqnm embedding,uqgln clqnm embedding}. Recall the explanatory remarks below \Cref{glm embedding into clnm}. As in the classical case, it is convenient to arrange the $\Uqglnm$-weight vectors $v_{i + (j-1)n}$ of $V^{(nm)}$, for $i = 1, \ldots, n$ and $j = 1, \ldots, m$, in an $n \times m$ rectangular array like \eqref{basis rect diagram}. The $Cl_q(nm)$ generators $\psi_{i+(j-1)n}$ and $\psi_{i + (j-1)n}^\dagger$ act on a given state vector $v(\ell)$ by attempting to vacate and occupy the $(i, j)$ position. Attempting to vacate an unoccupied position kills the state, as does attempting to occupy a filled position. Each $\omega_{a}$ merely scales $v(\ell)$ by $q^{-\ell_a}$, so the $\kappa$ factors in the definition of $\lambda_q$ act diagonally on state vectors to account for the action of $K_i^{(n)}$ generators in $\Delta^{(m-1)}(E_i^{(n)})$ and $\Delta^{(m-1)}(F_i^{(n)})$. Analogous comments apply to the $\kappa$ in the definition of $\rho_q$. Therefore, if we let $p_j$ and $p_i'$ denote some integers, and we set $\alpha_{i,j}^v = e_{i,j} - e_{i+1, j}$ and $\alpha_{i,j}^h = e_{i,j} - e_{i, j+1}$, we see, for instance, that
\begin{align}
	\lambda_q(E_i^{(n)}) \, v(\ell) 
	&= \sum_{j=1}^m (-q)^{p_j} v(\ell + \alpha_{i,j}^v) 
	\label{uqgln ei on basis}
	\quad \text{and} \\
	\rho_q(E_j^{(m)}) \, v(\ell) 
	&= \sum_{i=1}^n (-q)^{p_i'} v(\ell + \alpha_{i,j}^h)
	\label{uqglm ej on basis}
\end{align}
for any state vector $v(\ell)$. That is, the $i$th positive root vector $E_i^{(n)}$ in $\Uqgln$ attempts to shift each occupied position in the $(i+1)$st row \textit{upwards}, while the $j$th positive root vector $E_j^{(m)}$ in $\Uqglm$ attempts to shift occupied positions in the $(j+1)$st column \textit{to the left}. This means that \Cref{action of ein,action of ejm} still illustrate the action of $\Uqgln$ and $\Uqglm$ root vectors induced by $\lambda_q$ and $\rho_q$, if we replace the signs by suitable powers of $-q$. This means the $\Uqgln$ and $\Uqglm$ root vectors act as their classical counterparts in the limit $q \to 1$: compare \Cref{uqgln ei on basis,uqglm ej on basis} to \Cref{gln ei on basis,glm ej on basis}.

In addition, observe that each $L_i^{(n)}$ and each $L_j^{(m)}$ induces a \textit{quantized degree operator} that is the exponential of a degree operator acting as in \Cref{diag L action}. For instance, the action of $\lambda_q(L_i^{(n)})$ counts occupied positions in the $i$th \textit{row}:
$$
	L_i^{(n)} \rhd v(\ell) 
	= q^{\sum_{j=1}^m \ell_{i + (j-1)m}} 
	= q^{\lambda(\bar{L}_i)} v(\ell).
$$
Recall \Cref{gln embedding into clnm} defines $\lambda\colon \gln \to Cl\left(\mathbb{C}^n \oplus (\mathbb{C}^n)^*\right)$ in the classical case. Dually, the action of $\rho_q(L_j^{(m)})$ counts occupied positions in the $j$th \textit{column}. The action of root vectors in $\Uqgln$ and $\Uqglm$ preserves spaces of homogeneous column and row degree, so it must commute with the action of the $L_j^{(m)}$ and the $L_i^{(n)}$, respectively.

The next proposition shows that although $\lambda_q\colon \Uqgln \to Cl_q(nm)$ and $\rho_q\colon \Uqglm \to Cl_q(nm)$ do \textit{not} define commuting subalgebras of $Cl_q(nm)$, they do induce commuting actions on $\bigwedge_q(V^{nm})$. Recall that in the classical case the maps $\lambda\colon \gln \to \Clnm$ and $\rho\colon \glm \to \Clnm$ of \Cref{gln embedding into clnm,glm embedding into clnm} define commuting subalgebras of $\Clnm \cong \End(\bigwedge(\mathbb{C}^{nm}))$. \Cref{clqnk is semisimple} implies that quantum representation $Cl_q(nm) \to \End(\bigwedge_q(V^{(nm)})$ is \textit{not} faithful. This means, for instance, that the \textit{actions} of $\lambda_q(E_i^{(n)})$ and $\rho_q(E_j^{(m)})$ may commute even if $[\lambda_q(E_i^{(n)}), \, \rho_q(E_j^{(m)})]$ is \textit{not} the zero element in $Cl_q(nm)$.

\begin{prop}\label[prop]{uqgln uqglm embeddings commute}
	The embeddings $\lambda_q$ and $\rho_q$ of \Cref{uqgln clqnm embedding} and \Cref{uqglm clqnm embedding} define commuting subalgebras of $\End\left(\bigwedge_q(V^{(nm)})\right)$.
\end{prop}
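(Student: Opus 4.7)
The plan is to verify that $[\lambda_q(X), \rho_q(Y)] = 0$ as endomorphisms of $\bigwedge_q(V^{(nm)})$ for each pair of generators $X \in \{E_i^{(n)}, F_i^{(n)}, L_i^{(n)}\}$ of $\Uqgln$ and $Y \in \{E_j^{(m)}, F_j^{(m)}, L_j^{(m)}\}$ of $\Uqglm$; this is equivalent to the images generating commuting subalgebras of $\End(\bigwedge_q(V^{(nm)}))$. Since the natural action of $\End(\bigwedge_q(V^{(nm)}))$ on $\bigwedge_q(V^{(nm)})$ is faithful, it suffices to test each identity on the basis $\{v(\ell) : \ell \in \{0, 1\}^{nm}\}$ of \Cref{braided ext alg basis}. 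There are nine such pairs.

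The five cases involving at least one $L$-type generator can be dispatched using the bigraded structure on $\bigwedge_q(V^{(nm)})$ discussed surrounding \Cref{uqgln ei on basis,uqglm ej on basis}. On the basis $v(\ell)$, both $\lambda_q(L_i^{(n)})$ and $\rho_q(L_j^{(m)})$ act diagonally, the former scaling by $q$ to the row-$i$ occupancy count and the latter by $q$ to the column-$j$ occupancy count of the rectangular array \eqref{basis rect diagram}; simultaneously diagonal operators commute. Furthermore, \Cref{uqgln ei on basis} shows that $\lambda_q(E_i^{(n)})$ and $\lambda_q(F_i^{(n)})$ shift occupied positions vertically within a fixed column and therefore preserve every column-occupancy count, so they commute with each diagonal $\rho_q(L_j^{(m)})$. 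A symmetric argument using \Cref{uqglm ej on basis} handles $\rho_q(E_j^{(m)}), \rho_q(F_j^{(m)})$ versus $\lambda_q(L_i^{(n)})$.

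The four remaining cases, namely the commutators among $E$- and $F$-type generators, follow the blueprint of the classical \Cref{gln and glm embeddings commute}. Taking $[\lambda_q(E_i^{(n)}), \rho_q(E_j^{(m)})]$ as the model, expanding via \Cref{uqgln clqnm embedding,uqglm clqnm embedding} yields a double sum indexed by $(a, b) \in [1,m] \times [1,n]$ whose summands involve four Clifford generators at the sites $(i, a), (i+1, a), (b, j), (b, j+1)$ together with $\omega$-monomials coming from the $\kappa$-factors. When the four sites are pairwise distinct, the Clifford generators anticommute up to $\omega$-corrections; the $\kappa$-factors are engineered precisely so that these corrections cancel, and the summand contributes zero to the commutator. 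The only non-trivial regime is when the sites overlap, forcing $a \in \{j, j+1\}$ and $b \in \{i, i+1\}$, which leaves a small number of surviving terms whose cancellation is obtained using the $q$-commutator identity \crossrefcliff{q comm 4 psi}, exactly as was used in the proofs of \Cref{uqgln uqglnm embedding,uqglm clqnm embedding}. The pairs $[\lambda_q(E_i^{(n)}), \rho_q(F_j^{(m)})]$, $[\lambda_q(F_i^{(n)}), \rho_q(E_j^{(m)})]$, and $[\lambda_q(F_i^{(n)}), \rho_q(F_j^{(m)})]$ are handled by entirely analogous computations; the last can also be deduced from the first by applying the involution $*\colon Cl_q(nm) \to Cl_q(nm)$ of \crossrefcliff{star struct}, which exchanges $E$- and $F$-type images.

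The main obstacle is the bookkeeping of $q$-exponents contributed by the $\kappa$-factors: they are placed where they are precisely to force cancellation on the module, and tracking their interaction with the Clifford commutators across all $(a, b)$ regimes is tedious, though routine. A cleaner alternative would be to exploit the $\Uqgln$-module isomorphism $\bigwedge_q(V^{(nm)}) \cong \bigwedge_q(\Vn)^{\otimes m}$ of \Cref{isom tensor power braided ext alg}, under which $\lambda_q$ factors as $\Gamma_q \circ \Phi_{q, n}^{\otimes m} \circ \Delta^{(m-1)}$ by \Cref{uqgln embedding into clqnm diag} and $\rho_q$ decomposes via \Cref{uqglm embedding into clqnm diag}, reducing the commutator identity to local checks in $Cl_q(n)^{\otimes 2}$ and $Cl_q(m)^{\otimes 2}$.
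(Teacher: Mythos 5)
Your overall strategy matches the paper's: reduce to the nine generator pairs, dispatch the $L$-cases by the diagonal/bigrading argument, and reduce the $E$/$F$ commutators to the overlapping index regime $a \in \{j, j+1\}$, $b \in \{i, i+1\}$, with the $*$-involution handling the remaining sign of generators. However, there is a genuine gap at the decisive step. You assert that the surviving overlapping terms cancel ``using the $q$-commutator identity \crossrefcliff{q comm 4 psi}, exactly as was used in the proofs of \Cref{uqgln uqglnm embedding,uqglm clqnm embedding}.'' Those earlier proofs verify relations \emph{inside} the algebra, and that is precisely what fails here: after the reduction, the commutator $[\lambda_q(E_i^{(n)}), \rho_q(E_j^{(m)})]$ collapses to a difference of two terms of the form
\begin{equation*}
	\kappa_{i, >(j+1)} \kappa_{<i, j} \Big( \qinv \omega_{i + (j-1)n}^{-1} \omega_{i+1 + jn}
	\big\{ \psi_{i+1 +(j-1)n}^{\pdg}, \psi_{i+1 + (j-1)n}^\dagger \big\}
	- \big\{ \psi_{i +jn}^\dagger, \psi_{i +jn}^{\pdg} \big\} \Big)
	\psi_{i + (j-1)n}^\dagger \psi_{i+1 + jn}^{\pdg},
\end{equation*}
which is \emph{not} the zero element of $Cl_q(nm)$ --- the quantized anticommutation relations give $\psi_a\psi_a^\dagger + q^{\pm1}\psi_a^\dagger\psi_a = \omega_a^{\mp1}$, so the plain anticommutators $\{\psi_a, \psi_a^\dagger\}$ are not scalars in the algebra. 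The cancellation only happens on the module: by \Cref{clqn action by raising lowering} each $\{\psi_a, \psi_a^\dagger\}$ acts as the identity on every $v(\ell)$, and the two $\omega$-factors act trivially on the shifted state, so the operator annihilates $\bigwedge_q(V^{(nm)})$. This is exactly the non-faithfulness phenomenon the paper flags before the proposition: $\lambda_q$ and $\rho_q$ do \emph{not} commute in $Cl_q(nm)$, only their actions do. An attempt to push your algebra-level cancellation through would stall.

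A secondary caveat: your proposed ``cleaner alternative'' via the factorization $\bigwedge_q(V^{(nm)}) \cong \bigwedge_q(\Vn)^{\otimes m}$ is unlikely to simplify matters, because the two tensor factorizations are realized in incompatible weight bases (\Cref{column major order means glm action does not factor}); $\rho_q$ does not factor through $\Uqglnm$, so there is no common local picture in which both commutator factors become ``nearest-neighbor'' checks.
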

\begin{proof}
	The proof follows from direct calculation. Considering the explanatory remarks preceding the proposition, it remains to show that the action induced by the root vectors of $\Uqgln$ commutes with the action induced by the root vectors of $\Uqglm$. We begin by computing commutators in the quantum Clifford algebra. As usual, our strategy is to consider different summation index regimes independently. For instance, observe that when $a > j + 1$ or $b < i$, the product $\kappa_{<b, j}$ commutes with $\psi_{i + (a-1)n}^\dagger \psi_{i+1 + (a-1)n}$ and similarly the product $\kappa_{i, >a}$ commutes with $\psi_{b + (j-1)n}^\dagger \psi_{b + jn}$. Therefore, the sum
	\begin{align*}
		[\lambda_q(E_i^{(n)}), \rho_q(E_j^{(m)})] &=
			\sum_{a = 1}^m \sum_{b = 1}^n \big( \kappa_{i, >a} \, \psi_{i + (a-1)n}^\dagger \psi_{i+1+(a-1)n}^{\pdg}
				\kappa_{<b, j} \, \psi_{b + (j-1)n}^\dagger \psi_{b + jn}^{\pdg} \\
			&\quad\quad\quad - \kappa_{<b, j} \, \psi_{b + (j-1)n}^\dagger \psi_{b + jn}^{\pdg}
				\kappa_{i, >a} \, \psi_{i + (a-1)n}^\dagger \psi_{i+1+(a-1)n}^{\pdg}  \big)
	\end{align*}
		vanishes in the regime where $b < i$ or $a > j+1$ because we can appropriately rearrange the creation and annihilation operators. Thus we see that the commutator
	\begin{align*}
		[\lambda_q(E_i^{(n)}), \rho_q(E_j^{(m)})]
			&= \sum_{a, b = i, i+1} \kappa_{i, >a} \, \psi_{i + (a-1)n}^\dagger \psi_{i+1+(a-1)n}^{\pdg}
				\kappa_{<b, j} \, \psi_{b + (j-1)n}^\dagger \psi_{b + jn}^{\pdg} \\
			&\quad - \sum_{a, b = i, i+1} \kappa_{<b, j} \, \psi_{b + (j-1)n}^\dagger \psi_{b + jn}^{\pdg}
				\kappa_{i, >a} \, \psi_{i + (a-1)n}^\dagger \psi_{i+1+(a-1)n}^{\pdg} \\
			&= \sum_{\substack{a = j, j + 1 \\ b = i, i + 1}} [\kappa_{i, >a}
				\psi_{i + (a-1)n}^\dagger \psi_{i+1 + (a-1)n}^{\pdg},
				\kappa_{<b, j}
				\psi_{b + (j-1)n}^\dagger \psi_{b + jn}^{\pdg}].
	\end{align*}
	We now take a closer look at the four remaining commutators. When $(a, b) = (j, i)$, each term in the commutator
	$[\kappa_{i, >a} \psi_{i + (a-1)n}^\dagger \psi_{i+1 + (a-1)n}, \kappa_{<b, j} \psi_{b + (j-1)n}^\dagger \psi_{b + jn}]$ contains a factor of $\big(\psi_{i + (j-1)n}^\dagger \big)^2$, so it vanishes. Similarly, when $(a, b) = (j+1, i+1)$, each term contains a factor of $\big(\psi_{i+1 + jn}\big)^2$, so it also vanishes. Only two terms remain:
	\begin{align*}
		[\lambda_q(E_i^{(n)}), \rho_q(E_j^{(m)})] &=
				\qinv \kappa_{i, >j} \kappa_{<(i+1), j}
				[\psi_{i+(j-1)n}^\dagger \psi_{i+1 +(j-1)n}^{\pdg}, \psi_{i+1 + (j-1)n}^\dagger \psi_{i+1 + jn}^{\pdg}] \\
			&\quad + \kappa_{i, >(j+1)} \kappa_{<i, j}
				[\psi_{i + jn}^\dagger \psi_{i+1 +jn}^{\pdg}, \psi_{i + (j-1)n}^\dagger \psi_{i + jn}^{\pdg}] \\
			&= \qinv \kappa_{i, >j} \kappa_{<(i+1), j} \psi_{i + (j-1)n}^\dagger
				\big\{ \psi_{i+1 +(j-1)n}^{\pdg}, \psi_{i+1 + (j-1)n}^\dagger \big\}
				\psi_{i+1 + jn}^{\pdg} \\
			&\quad - \kappa_{i, >(j+1)} \kappa_{<i, j} \psi_{i + (j-1)n}^\dagger
				\big\{ \psi_{i +jn}^\dagger, \psi_{i +jn}^{\pdg}\big\}
				\psi_{i+1 + jn}^{\pdg}\\
			&= \kappa_{i, >(j+1)} \kappa_{<i, j} \big( \qinv \omega_{i + (j-1)n}^{-1} \omega_{i+1 + jn}
				\big\{ \psi_{i+1 +(j-1)n}^{\pdg}, \psi_{i+1 + (j-1)n}^\dagger \big\} \\
			&\quad - \big\{ \psi_{i +jn}^\dagger, \psi_{i +jn}^{\pdg} \big\} \big)
				\psi_{i + (j-1)n}^\dagger \psi_{i+1 + jn}^{\pdg}.
	\end{align*}
	In the second-to-last equality we used \Cref{psi psid comm anticomm} to slide the anticommutators to the left of $\psi_{i+(j-1)n}^\dagger$.
	
	Using the last equality, we see that the commutator $[\lambda_q(E_i^{(n)}), \rho_q(E_j^{(m)})]$ annihilates $\bigwedge_q(V^{(nm)})$. Indeed, \Cref{clqn action by raising lowering} implies each anticommutator $\{\psi_a, \psi_a^\dagger\}$ acts as the identity map, so for any basis vector $v(\ell)$ of $\bigwedge_q(V^{(nm)})$,
	\begin{align*}
		[\lambda_q(E_i^{(n)}), \rho_q(E_j^{(m)})] \, v(\ell)
			&= (-1)^{\sum_{p = i+1 + (j-1)n}^{i + jn} p} \kappa_{i, >(j+1)} \,\kappa_{<i, j} \\
			&\quad \quad  \cdot \big( \qinv \omega_{i + (j-1)n}^{-1} \omega_{i+1 + jn}
				\big\{ \psi_{i+1 +(j-1)n}^{\pdg}, \psi_{i+1 + (j-1)n}^\dagger \big\} \\
			&\quad \quad - \big\{ \psi_{i +jn}^\dagger, \psi_{i +jn}^{\pdg}\big\} \big) v(\ell + e_{i + (j-1)n} - e_{i+1 + jn}) \\
			&= (-1)^{\sum_{p = i+1 + (j-1)n}^{i + jn} p} \, \kappa_{i, >(j+1)} \kappa_{<i, j} \\
			&\quad \quad \cdot \big( v(\ell + e_{i + (j-1)n} - e_{i+1 + jn}) - v(\ell + e_{i + (j-1)n} - e_{i+1 + jn}) \big) \\
			&= 0.
	\end{align*}
	Similar calculations show that $[\lambda_q(E_i^{(n)}), \rho_q(F_j^{(m)})]$ also induces the zero map. Applying the $*$-operation defined by \Cref{star struct} shows that the remaining commutators also induce the zero map, since
\begin{align*}
	[\lambda_q(E_i^{(n)}), \rho_q(E_j^{(m)})]^*
		&= -[\lambda_q(F_i^{(n)}), \rho_q(F_j^{(m)})],
		\quad \text{and} \\[+0.3em]
	[\lambda_q(E_i^{(n)}), \rho_q(F_j^{(m)})]^*
		&= -[\lambda_q(E_i^{(n)}), \rho_q(F_j^{(m)})].
	&\qedhere
\end{align*}
\end{proof}


	\subsection{Multiplicity-free decomposition of $\bigwedge_q(V^{(nm)})$}
	\label{type a q mf decomposition}

In the last section we found homomorphic images of $\Uqgln$ and $\Uqglm$ in the quantum Clifford algebra $Cl_q(nm)$ that generate commuting actions on the $Cl_q(nm)$-module $\bigwedge_q(V^{(nm)})$. In this subsection we use those actions to compute a multiplicity-free decomposition of $\bigwedge_q(V^{(nm)})$ as a $\Uqgln \otimes \Uqglm$-module. We rely on the classical skew duality \Cref{skew gln glm duality} and on the Double Commutant \Cref{double comm}(b) to prove our main quantized duality result, \Cref{uqgln uqglm duality}.

\begin{thm}\label{uqgln uqglm duality}
	As a $\Uqgln \otimes \Uqglm$-module, 
	\begin{align}\label{uqgln uqglm duality decomp}
		\textstyle \bigwedge_q(V^{(nm)}) = \displaystyle \bigoplus_\mu V_\mu^{(n)} \otimes V^{(m)}_{\mu'}.
	\end{align}
	The sum ranges over all partitions $\mu$ fitting in an $n \times m$ rectangle, $V_\mu^{(p)}$ denotes the irreducible $U_q(\mathfrak{gl}_p)$-module parametrized by $\mu$, and $\mu'$ denotes the conjugate of $\mu$. In particular, $\bigwedge_q(V^{(nm)})$ is multiplicity-free as a $\Uqgln \otimes \Uqglm$-module. 
	
	Consequently, the maps $\lambda_q \colon \Uqgln \to Cl_q(nm)$ and $\rho_q \colon \Uqglm \to Cl_q(nm)$ generate mutual commutants in $\End\big(\bigwedge_q(V^{(nm)})\big)$.
\end{thm}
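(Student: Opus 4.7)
The plan is to mirror the three-step structure of the classical proof of \Cref{skew gln glm duality} but to appeal to part (b) rather than part (a) of the Double Commutant \Cref{double comm}. Indeed, \Cref{uqgln uqglm embeddings commute} already gives us commuting subalgebras $\lambda_q(\Uqgln)$ and $\rho_q(\Uqglm)$ of $\End(\bigwedge_q(V^{(nm)}))$, but not a priori mutual commutants, so I would reverse the logical order used classically: first establish the multiplicity-free decomposition \eqref{uqgln uqglm duality decomp}, and then read off the mutual commutant property from \Cref{double comm}(b).

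First I would bound the joint weights that can possibly appear, mimicking \Cref{hw bounds skew gln glm}. By the discussion following \Cref{uqglm clqnm embedding}, the operators $\lambda_q(L_i^{(n)})$ and $\rho_q(L_j^{(m)})$ act on each basis vector $v(\ell)$ as the quantized row- and column-degree operators, and the $v(\ell)$ for $\ell \in \{0,1\}^{nm}$ form a basis of joint eigenvectors. Since any given row of the $n \times m$ grid \eqref{basis rect diagram} contains at most $m$ occupied positions, and any column contains at most $n$, every $\Uqgln \otimes \Uqglm$-weight $(\mu,\nu)$ in $\bigwedge_q(V^{(nm)})$ satisfies $\mu_1 \leq m$ and $\nu_1 \leq n$.

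Next I would construct, for each partition $\mu$ fitting in the $n \times m$ rectangle, a joint highest weight vector $v_\mu$ by the same recipe as in \Cref{hwv skew gln glm}: $v_\mu$ is the ordered product (in the algebra $\bigwedge_q(V^{(nm)})$) of the basis vectors filling the Young diagram of $\mu$ inside the array \eqref{basis rect diagram}. That $\lambda_q(E_i^{(n)}) v_\mu = \rho_q(E_j^{(m)}) v_\mu = 0$ then follows from \eqref{uqgln ei on basis}--\eqref{uqglm ej on basis}: each summand attempts to shift an occupied box up one row or left one column into a position that, by the left- and top-justified shape of $\mu$, is already occupied; the corresponding creation operator squares to zero and kills that summand. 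Every summand vanishing individually means the nontrivial $\kappa$-factors from $\lambda_q, \rho_q$ play no role. Reading the weight of $v_\mu$ off the quantized row-degree and column-degree operators gives $\Uqgln$-weight $\mu$ and $\Uqglm$-weight $\mu'$, so $V_\mu^{(n)} \otimes V_{\mu'}^{(m)}$ embeds in $\bigwedge_q(V^{(nm)})$ for every $\mu \subseteq n \times m$.

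Finally, I would upgrade this to equality by a dimension count. Quantum irreducibles of type $(1,\ldots,1)$ have the same dimension as their classical counterparts, so combining the classical skew duality \Cref{skew gln glm duality} with $\dim \bigwedge_q(V^{(nm)}) = 2^{nm}$ gives
\[
\sum_{\mu \subseteq n \times m} \dim V_\mu^{(n)} \cdot \dim V_{\mu'}^{(m)} \;=\; 2^{nm} \;=\; \dim \bigwedge_q(V^{(nm)}).
\]
Semisimplicity of finite-dimensional $\Uqgln \otimes \Uqglm$-modules then forces the submodules $V_\mu^{(n)} \otimes V_{\mu'}^{(m)}$ just exhibited to exhaust $\bigwedge_q(V^{(nm)})$, yielding \eqref{uqgln uqglm duality decomp}. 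Since this decomposition is visibly multiplicity-free, \Cref{double comm}(b) applied to $\lambda_q(\Uqgln)$ and $\rho_q(\Uqglm)$ delivers the mutual commutant statement. The main obstacle I anticipate is making precise the annihilation $\lambda_q(E_i^{(n)}) v_\mu = 0$ in the presence of the numerous $\kappa$-factors and signs, but the key observation that every summand vanishes \emph{individually} (because the relevant creation operator is squared) sidesteps any cancellation argument.
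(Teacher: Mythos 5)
Your proposal is correct and follows essentially the same route as the paper: construct the joint highest weight vectors $v_\mu$ by filling the Young diagram of $\mu$ in the $n\times m$ grid, observe that the positive root vectors kill each summand individually because the target position is already occupied, then use the classical skew duality and a dimension count to show these account for all of $\bigwedge_q(V^{(nm)})$, and finally invoke \Cref{double comm}(b) for the mutual commutant statement. The only (harmless) difference is your preliminary weight-bound step mimicking \Cref{hw bounds skew gln glm}, which the paper's quantum argument replaces entirely by the dimension count.
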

\begin{rmk}
	Recall that in the classical case, we first establish that $\gln$ and $\glm$ generate each others commutants in $\End\left(\bigwedge(\mathbb{C}^n \otimes \mathbb{C}^m)\right)$, and \textit{then} we use the double commutant yoga of \Cref{double comm}(a) to conclude that the decomposition of $\bigwedge(\mathbb{C}^n \otimes \mathbb{C}^m)$ as a $U(\gln) \otimes U(\glm)$-module is multiplicity-free. 
	
	In the quantum case, the argument flows the opposite way. First we construct a multiplicity-free decomposition of $\bigwedge_q(V^{(nm)})$ as a $\Uqgln \otimes \Uqglm$-module and \textit{then} we use the Double Commutant \Cref{double comm}(b) to conclude that $\Uqgln$ and $\Uqglm$ indeed generate each others full commutants in $\End\left(\bigwedge_q(V^{(nm)}) \right)$. Our proof of \Cref{uqgln uqglm duality} requires the classical result and a dimension count to guarantee that the decomposition of $\bigwedge_q(V^{(nm)})$ is in fact multiplicity-free.
\end{rmk}

\Cref{uqgln uqglm duality} proves that the irreducibles appearing in the decomposition of $\bigwedge_q(V^{(nm)})$ as a $\Uqgln \otimes \Uqglm$-module are parametrized by the same weights as the irreducibles in the decomposition of $\bigwedge(\mathbb{C}^{nm}) \cong \bigwedge(\mathbb{C}^n \otimes \mathbb{C}^m)$ as a $\mathfrak{gl}_n \otimes \mathfrak{gl}_m$-module. Moreover, our decomposition de-quantizes appropriately, which means that the endomorphisms induced by the quantum group generators tend to those induced by their classical counterparts in the limit $q \to 1$. For instance, compare the action of $E_i^{(n)}$ and $E_j^{(m)}$ on an arbitrary basis vector in the quantum case as computed in \Cref{uqgln ei on basis} and \Cref{uqglm ej on basis} to the action of their classical counterparts in the classical case, as computed in \Cref{gln ei on basis} and \Cref{glm ej on basis}.

Our proof of the first statement in \Cref{uqgln uqglm duality} mimics the argument establishing Theorem $6.16$ in \cite{lzz_2010}. First we construct a joint highest weight vector for each isotypic component in the decomposition of \Cref{uqgln uqglm duality decomp} in the next lemma, and then we use a dimension count and the classical skew $GL_n \times GL_m$ duality \Cref{skew gln glm duality} to conclude. 

Recall \Cref{hwv skew gln glm}, which computes highest weight vectors in the classical case. In the quantum case, the highest weight vectors are essentially the same: for each partition $\mu$ fitting in an $n \times m$ rectangle we simply take the product of all basis vectors in the boxes occupied by the Young diagram corresponding to $\mu$. \Cref{hwv young diagram} illustrates an example.

\begin{lem}\label[lem]{uqgln uqglm hwv}
	For each partition $\mu = (\mu_1, \ldots, \mu_n)$ satisfying $\mu_1 \leq m$, let
	$$
		v_\mu = 
		\left(v_1 \cdots v_{1 + (\mu_1 - 1)n} \right)
		\left(v_2 \cdots v_{2 + (\mu_2 - 1)n} \right)
		\cdots 
		\left(v_n \cdots v_{2 + (\mu_2 - 1)n}\right).
	$$ 
	Then
	\begin{enumerate}[(1)]
		\item The element $v_\mu$ is a highest weight vector with respect to the actions of $\Uqgln$ and of $\Uqglm$.
		\item The $\Uqgln$-weight of $v_\mu$ is $\mu$, and its $\Uqglm$-weight is $\mu'$.
	\end{enumerate}
\end{lem}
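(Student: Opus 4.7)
The plan is to mimic the classical argument given in the proof of \Cref{hwv skew gln glm}, relying on the diagrammatic description of the $\Uqgln$- and $\Uqglm$-actions on the basis $\{v(\ell)\}_{\ell \in \{0,1\}^{nm}}$ developed just after \Cref{uqglm clqnm embedding}. First I would arrange the generators $v_{i+(j-1)n}$ in the $n \times m$ rectangular array \eqref{basis rect diagram} and observe that $v_\mu$ is, up to a sign, precisely the state vector $v(\ell_\mu)$ where $(\ell_\mu)_{i + (j-1)n} = 1$ exactly when the box $(i,j)$ belongs to the Young diagram of $\mu$. Since $\mu$ fits in an $n \times m$ rectangle, this is a well-defined nonzero element of $\bigwedge_q(V^{(nm)})$.

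Next, I would verify that $v_\mu$ is a joint highest weight vector by checking separately that $\lambda_q(E_i^{(n)}) \rhd v_\mu = 0$ and $\rho_q(E_j^{(m)}) \rhd v_\mu = 0$ for every $i$ and $j$. By \Cref{uqgln ei on basis}, $\lambda_q(E_i^{(n)})$ acts on $v_\mu$ as a sum indexed by $j = 1, \ldots, m$ of attempted upward shifts of occupied boxes in the $(i+1)$st row to the $i$th row. Because $\mu$ is a partition we have $\mu_i \geq \mu_{i+1}$, so every occupied box in row $i+1$ of $v_\mu$ sits below an occupied box in row $i$; consequently each term in the sum either tries to vacate an empty position or tries to occupy a filled one, and hence vanishes. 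An identical argument using \Cref{uqglm ej on basis} and the fact that $\mu'$ is also a partition (so $\mu'_j \geq \mu'_{j+1}$) shows that $\rho_q(E_j^{(m)}) \rhd v_\mu = 0$.

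Finally, I would compute the weights of $v_\mu$ using the diagonal generators. The remarks following \Cref{uqglm clqnm embedding} explain that $\lambda_q(L_i^{(n)})$ acts diagonally as $q$ to the power of the number of occupied positions in the $i$th row of the state vector, while $\rho_q(L_j^{(m)})$ acts diagonally as $q$ to the power of the number of occupied positions in the $j$th column. By construction of $v_\mu$, the $i$th row contains exactly $\mu_i$ occupied positions and the $j$th column contains exactly $\mu'_j$ occupied positions, so
\begin{equation*}
    \lambda_q(L_i^{(n)}) \rhd v_\mu = q^{\mu_i} v_\mu
    \quad \text{and} \quad
    \rho_q(L_j^{(m)}) \rhd v_\mu = q^{\mu'_j} v_\mu,
\end{equation*}
which identifies the $\Uqgln$- and $\Uqglm$-weights of $v_\mu$ as $\mu$ and $\mu'$ respectively.

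I do not anticipate any genuine obstacle in this proof: the construction of $v_\mu$ as an indicator of a Young diagram is tailored precisely so that the combinatorial ``shift up'' and ``shift left'' descriptions of the root vector actions produce no valid target states. The only point requiring mild care is keeping track of the fact that the quantum actions differ from their classical counterparts only by diagonal scalars (the $\kappa$-factors and signs $(-q)^{p_j}$), which do not affect whether a term vanishes; this is exactly what makes the classical argument of \Cref{hwv skew gln glm} carry over verbatim.
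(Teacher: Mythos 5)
Your proposal is correct and follows essentially the same route as the paper's proof: both use the diagrammatic ``shift up/left'' description of $\lambda_q(E_i^{(n)})$ and $\rho_q(E_j^{(m)})$ from \Cref{uqgln ei on basis,uqglm ej on basis} together with the left-justified shape of the Young diagram of $\mu$ to kill all positive root vectors, and then read off the weight from the diagonal generators. The only cosmetic difference is that you compute the weight via the $L_i^{(n)}$ and $L_j^{(m)}$ (giving $q^{\mu_i}$ and $q^{\mu'_j}$ directly), whereas the paper records the eigenvalues of $K_i^{(n)}$ and $K_j^{(m)}$ as $q^{\mu_i-\mu_{i+1}}$ and $q^{\mu'_j-\mu'_{j+1}}$; these are equivalent.
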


\begin{proof}
	It is understood that if $\mu_i = 0$, then $v_\mu$ contains no $v_{i + (b-1)n}$ as a factor. Since $\mu$ is a partition, this also implies that there is no $v_{j + (b-1)n}$ factor for every $j \geq i$.
	
	Much like in the classical case, \Cref{uqgln ei on basis,uqglm ej on basis} show that positive root vectors in $\Uqgln \otimes \Uqglm$ shift occupied positions in any given state vector above and to the left, as illustrated by \Cref{action of ein,action of ejm}. Since $\mu$ is a left-justified partition, every position above and to the left of an occupied position in $v_\mu$ is occupied already, so
	$$\lambda_q(E_i^{(n)}) \, v_\mu = \rho_q(E_j^{(m)}) \, v_\mu = 0,$$
	for $i = 1, \ldots, n-1$ and $j = 1, \ldots, m-1$. Hence, each $v_\mu$ is a joint $\Uqgln \otimes \Uqglm$-highest weight vector.

	We can readily prove $(2)$ using \Cref{uqgln clqnm embedding,uqglm clqnm embedding}. For instance, we see that
	\begin{align*}
		\lambda_q(K_{i}^{(n)}) \, v(\ell) &= q^{\sum_{j=1}^m \ell_{i + (j-1)n} - \ell_{i+1 + (j-1)n}} \, v(\ell), \quad \text{and} \\
		\rho_q(K_{j}^{(m)}) \, v(\ell) &= q^{\sum_{i=1}^n \ell_{i + (j-1)n} - \ell_{i + jn}} \, v(\ell),
	\end{align*}
	for any $v(\ell)$, so 
	\begin{align*}
		\lambda_q(K_{i}^{(n)}) \, v_\mu &= q^{\mu_i - \mu_{i+1}} \, v_\mu = q^{\langle \alpha_i^{(n)}, \, \mu \rangle} \, v_\mu, \quad \text{and} \\
		\rho_q(K_{j}^{(m)}) \, v_\mu &= q^{\mu_j' - \mu_{j+1}'} \, v_\mu = q^{\langle \alpha_j^{(m)}, \, \mu' \rangle} \, v_\mu.
	\end{align*}

	Here we used $\alpha_i^{(p)} = \epsilon_i - \epsilon_{i+1}$ to denote the $i$th simple positive root of $\mathfrak{gl}_p$.
\end{proof}

\begin{proof}[Proof of \Cref{uqgln uqglm duality}.]
	For any $\mathbb{Z}_{\geq 0}$-graded module $M$, let $M_{= k}$ denote the graded component of degree $k$. It follows from \Cref{uqgln uqglm hwv} that $\bigoplus_\mu V_\mu^{(n)} \otimes V_{\mu'}^{(m)}$ is a $\Uqgln \otimes \Uqglm$-submodule of $\bigwedge_q(V^{(nm)})$. Let $|\mu|$ denote the size of the partition $\mu$, that is, the sum of its parts. \Cref{uqgln ei on basis,uqglm ej on basis} prove that the $\Uqgln$- and the $\Uqglm$-actions preserve subspaces of homogeneous degree, so that in fact
	\begin{align*}
		\bigoplus_{|\mu| = k} V_\mu^{(n)} \otimes V_{\mu'}^{(m)} \subseteq \textstyle \left(\bigwedge_q(V^{(nm)})\right)_{=k}
	\end{align*}
	for each $k \leq nm$. The classical skew duality \Cref{skew gln glm duality} then implies that
	\begin{align*}
		\sum_{|\mu| = k} 
			\dim_{\mathbb{C}(q)}\bigg(V_\mu^{(n)} \otimes V_{\mu'}^{(m)}\bigg) 
		= \dim_{\mathbb{C}} \bigwedge(\mathbb{C}^n \otimes \mathbb{C}^m),
	\end{align*}
	since irreducible modules of $\mathfrak{gl}_p$ and of $U_q(\mathfrak{gl}_p)$ parametrized by the same dominant weight have the same dimension. The $\Uqglnm$-module $V^{(nm)}$ is flat in the sense of Lemma~2.32 in \cite{berenstein}, so the $v(\ell)$ form a basis of $\bigwedge_q(V^{(nm)})$, which implies that $\dim_{\mathbb{C}(q)}\bigwedge_q(V^{(nm)}) = \dim_{\mathbb{C}}\bigwedge(\mathbb{C}^n \otimes \mathbb{C}^m)$ and concludes our proof.
\end{proof}



\bibliographystyle{alphaurl}
\bibliography{ref}
\end{document}